\let\TeXchi\chi
\newbox\chibox
\chibox \hbox{\raise\dp0 \box 0 }
\def\chi{\copy\chibox}
\renewcommand{\d}{\mathrm{d}}
\newcommand{\dx}{\mathrm{d}x}
\newcommand{\dt}{\mathrm{d}t}
\newcommand{\ds}{\mathrm{d}s}
\newcommand{\dtau}{\mathrm{d}\tau}
\renewcommand{\epsilon}{\varepsilon}
\renewcommand{\rho}{\varrho}
\newcommand{\power}[2]{\bm{#1^{\mbox{\unboldmath{\scriptsize$#2$}}}}}
\author[V. B\"ogelein]{Verena B\"{o}gelein}
\address{Verena B\"ogelein\\
Fachbereich Mathematik, Universit\"at Salzburg\\
Hellbrunner Str. 34, 5020 Salzburg, Austria}
\email{verena.boegelein@sbg.ac.at}
\author[F. Duzaar]{Frank Duzaar}
\address{Frank Duzaar\\
Department Mathematik, Universit\"at Erlangen--N\"urnberg\\
Cauerstrasse 11, 91058 Erlangen, Germany}
\email{frank.duzaar@fau.de}
\author[N. Liao]{Naian Liao}
\address{Naian Liao\\
Fachbereich Mathematik, Universit\"at Salzburg\\
Hellbrunner Str. 34, 5020 Salzburg, Austria}
\email{naian.liao@sbg.ac.at}
\keywords{Doubly nonlinear parabolic equations, signed solutions, intrinsic scaling,
expansion of positivity, H\"older continuity}
\subjclass[2010]{35K65, 35K67, 35B65}
\begin{document}
\newtheorem{proposition}{Proposition}[section]
\newtheorem{theorem}{Theorem}[section]
\newtheorem{lemma}{Lemma}[section]
\newtheorem{corollary}{Corollary}[section]
\newtheorem{remark}{Remark}[section]
\newtheorem{definition}{Definition}[section]
\renewcommand{\thesection}{\arabic{section}}
\renewcommand{\theequation}{\thesection.\arabic{equation}}
\renewcommand{\thetheorem}{\thesection.\arabic{theorem}}
\numberwithin{equation}{section}
\numberwithin{theorem}{section}
\numberwithin{proposition}{section}
\numberwithin{lemma}{section}
\numberwithin{remark}{section}
\numberwithin{definition}{section}
\setcounter{secnumdepth}{3}
\newcommand{\cl}{\centerline}
\newcommand{\sms}{\smallskip}
\newcommand{\ms}{\medskip}
\newcommand{\bs}{\bigskip}
\newcommand{\noi}{\noindent}
\newcommand{\itl}[1]{\textit{#1}}
\newcommand{\blf}[1]{\textbf{#1}}
\newcommand{\dsty}{\displaystyle}
\newcommand{\txty}{\textstyle}
\newcommand{\ssty}{\scriptstyle}
\newcommand{\tty}{\texttt}


\newcommand\Par{\mathhexbox278\,}


\newcommand{\al}{\alpha}
\newcommand{\Al}{\Alpha}
\newcommand{\be}{\beta}
\newcommand{\Be}{\Beta}
\newcommand{\Gm}{\Gamma}
\newcommand{\gm}{\gamma}
\newcommand{\dl}{\delta}
\newcommand{\Dl}{\Delta}
\newcommand{\lm}{\lambda}
\newcommand{\Lm}{\Lambda}
\newcommand{\kp}{\kappa}
\newcommand{\varep}{\varepsilon}
\newcommand{\eps}{\epsilon}
\newcommand{\vp}{\varphi}
\newcommand{\sig}{\sigma}
\newcommand{\Sig}{\Sigma}
\newcommand{\om}{\omega}
\newcommand{\Om}{\Omega}
\newcommand{\uom}{\mbox{\boldmath$\omega$}}
\newcommand{\btau}{\mbox{\boldmath$\tau$}}
\newcommand{\bnu}{\mbox{\boldmath$\nu$}}
\newcommand{\up}{\upsilon}
\newcommand{\z}{\zeta}


\newcommand{\df}[1]{\buildrel\mbox{\small def}\over{#1}}
\newcommand{\op}[1]{\buildrel\mbox{\tiny o}\over{#1}}
\newcommand{\db}{\prime\prime}
\newcommand{\bsl}{\backslash}
\newcommand{\lb}{\lbrack\!\lbrack}
\newcommand{\rb}{\rbrack\!\rbrack}
\newcommand\la{\langle}
\newcommand\ra{\rangle}
\newcommand{\ev}{\equiv}
\newcommand{\nev}{\not\equiv}
\newcommand{\nn}{\mathbb{N}}
\newcommand{\qq}{\mathbb{Q}}
\newcommand{\zz}{\mathbb{Z}}
\newcommand{\rr}{\mathbb{R}}
\newcommand{\rn}{\rr^N}
\newcommand{\cc}{\mathbb{C}}
\newcommand{\id}{\mathbb{I}}
\newcommand{\bo}{\mathbb{O}}

\newcommand{\amsb}[1]{\mathbb{#1}}
\newcommand{\mcl}[1]{\mathcal{#1}}
\newcommand{\bl}[1]{\mathbf{#1}}
\newcommand{\ov}[1]{\overline{#1}}
\newcommand{\wt}[1]{\widetilde{#1}}
\newcommand{\wh}[1]{\widehat{#1}}

\newcommand{\llra}{\leftrightarrow}
\newcommand{\lra}{\longrightarrow}
\newcommand{\LLR}{\Longleftrightarrow}
\newcommand{\LRA}{\Longrightarrow}
\newcommand{\LLA}{\Longleftarrow}


\newcommand{\bbox}{\vrule height.6em width.6em 
depth0em} 
\newcommand{\os}{\vbox{\hrule \hbox{\vrule 
height.6em depth0pt 
\hskip.6em \vrule height.6em depth0em}
\hrule}} 


\newcommand{\dvg}{\operatorname{div}}
\newcommand{\curl}{\operatorname{curl}}
\newcommand{\supp}{\operatorname{supp}}
\newcommand{\essup}{\operatornamewithlimits{ess\,sup}}
\newcommand{\essinf}{\operatornamewithlimits{ess\,inf}}
\newcommand{\essosc}{\operatornamewithlimits{ess\,osc}}
\newcommand{\osc}{\operatornamewithlimits{osc}}
\newcommand{\sign}{\operatorname{sign}}
\newcommand{\loc}{\operatorname{loc}}
\newcommand{\diam}{\operatorname{diam}}
\newcommand{\dist}{\operatorname{dist}}
\newcommand{\card}{\operatorname{card}}
\newcommand{\meas}{\operatorname{meas}}
\newcommand{\spn}{\operatorname{span}}
\newcommand{\dtm}{\operatorname{det}}
%


\newcommand{\overlim}{\mathop{\overline{\lim}}\limits}
\newcommand{\underlim}{\mathop{\underline{\lim}}\limits}
\newcommand{\ttop}[2]{\genfrac{}{}{0pt}{}{#1}{#2}}
\newcommand{\bcu}{\mathop{\txty{\bigcup}}\limits}
\newcommand{\bca}{\mathop{\txty{\bigcap}}\limits}
\newcommand{\bsu}{\mathop{\txty{\sum}}\limits}
\newcommand{\pro}{\mathop{\txty{\prod}}\limits}


\newcommand{\pl}{\partial}
\newcommand{\ptt}{\frac{\pl}{\pl t}}
\newcommand{\ppx}{\frac\pl{\pl x}}
\newcommand{\dds}{\frac d{ds}}
\newcommand{\ddt}{\frac d{dt}}

\newcommand{\intl}{\int\limits}
\newcommand{\iintl}{\iint\limits}
\def\Xint#1{\mathchoice
    {\XXint\displaystyle\textstyle{#1}}%
    {\XXint\textstyle\scriptstyle{#1}}%
    {\XXint\scriptstyle\scriptscriptstyle{#1}}%
    {\XXint\scriptscriptstyle\scriptscriptstyle{#1}}%
    \!\int}
\def\XXint#1#2#3{\setbox0=\hbox{$#1{#2#3}{\int}$}
    \vcenter{\hbox{$#2#3$}}\kern-0.5\wd0}
\def\bint{\Xint-}
\def\dashint{\Xint{\raise4pt\hbox to7pt{\hrulefill}}}
\def\dashiint{\bint\kern-0.15cm\bint}

\newcommand{\ovl}[3]{\int_{#1}^{#2}\kern-#3pt\raise4pt\hbox to7pt{\hrulefill}\ }

\newcommand{\ovll}[3]{\intl_{#1}^{#2}\kern-#3pt\raise4pt\hbox to7pt{\hrulefill}\ }

\newcommand{\tvl}[2]{\iint_{#1}\kern-#2pt\raise4pt\hbox to7pt{\hrulefill}\ }



\newcommand{\omt}{\Om_T}
\newcommand{\plo}{\partial\Omega}
\newcommand{\ovo}{\bar{\Om} }

%
\newcommand{\ci}[1]{C^\infty\!\left({#1}\right)}
\newcommand{\cio}[1]{C_o^\infty\!\left({#1}\right)}
\newcommand{\lloc}[1]{L_{\loc}\!\left({#1}\right)}
\newcommand{\xy}{|x-y|}


\newcommand{\intom}{\intl_{\Om}}
\newcommand{\intbo}{\intl_{\plo}}
\newcommand{\inom}{\int_{\Om}}
\newcommand{\inbo}{\int_{\plo}}
\newcommand{\intrn}{\intl_{\rn}}


\newcommand{\bye}{
\title{On the H\"older regularity of signed solutions to a doubly nonlinear equation}

\date{}
\maketitle
\begin{abstract}
We establish the interior and boundary H\"older continuity of possibly sign-changing solutions to
a class of doubly nonlinear parabolic equations whose prototype is
\[
\partial_t\big(|u|^{p-2}u\big)-\Dl_p u=0,\quad p>1.
\]
The proof relies on the property of expansion of positivity and
the method of intrinsic scaling, all of which are realized by De Giorgi's iteration.
Our approach, while emphasizing the distinct roles of sub(super)-solutions,
is flexible enough to obtain the H\"older regularity of solutions
to initial-boundary value problems of Dirichlet type or Neumann type in a cylindrical domain, 
up to the parabolic boundary. In addition, based on the expansion of positivity,
we are able to give an alternative proof of Harnack's inequality for non-negative solutions.
Moreover, as a consequence of the interior estimates, 
we also obtain a Liouville-type result.
\vskip.2truecm
\end{abstract}

{\small\tableofcontents}

\section{Introduction and Main Results}
Let $E$ be an open set in $\rn$. For $T>0$ let $E_T$ denote the
cylindrical domain $E\times(0,T]$. 
We shall consider quasi-linear, parabolic partial differential equations of the form
\begin{equation}  \label{Eq:1:1}
	\partial_t\big(|u|^{p-2}u\big)-\dvg\bl{A}(x,t,u, Du) = 0\quad \mbox{ weakly in $ E_T$}
\end{equation}
where the function $\bl{A}(x,t,u,\z)\colon E_T\times\rr^{N+1}\to\rn$ is only assumed to be
measurable with respect to $(x, t) \in E_T$ for all $(u,\z)\in \rr\times\rn$,
continuous with respect to $(u,\z)$ for a.e.~$(x,t)\in E_T$,
and subject to the structure conditions
\begin{equation}\label{Eq:1:2p}
	\left\{
	\begin{array}{c}
		\bl{A}(x,t,u,\z)\cdot \z\ge C_o|\z|^p \\[5pt]
		|\bl{A}(x,t,u,\z)|\le C_1|\z|^{p-1}%
	\end{array}
	\right .
	\qquad \mbox{for a.e.~$(x,t)\in E_T$, $\forall\,u\in\rr$, $\forall\,\z\in\rn$,}
\end{equation}
where $C_o$ and $C_1$ are given positive constants, and $p>1$.
The prototype equation is
\begin{equation}\label{prototype}
	\partial_t\big(|u|^{p-2}u\big)-\Dl_p u=0\quad\mbox{ weakly in $ E_T$.}
\end{equation}
Here $\Dl_p:=\dvg (|Du|^{p-2}Du)$ is the $p$-Laplace operator.
When $p=2$ it becomes the heat equation.

The motivations to study such an equation will be explored in Section~\ref{S:NS}.
We however proceed to present our main results on the interior regularity in Section~\ref{S:interior}
and the boundary regularity in Section~\ref{S:boundary}.

When we speak of the structural {\it data}, we refer to the set of parameters
$\{p,\,N,\,C_o,\,C_1\}$.
We also write $\boldsymbol \gm$ as a generic positive constant that can be quantitatively
determined a priori only in terms of the data and that can change from line to line.


\subsection{Interior Regularity}\label{S:interior}
Let $\Gm:=\pl E_T-\overline{E}\times\{T\}$
be the parabolic boundary of $E_T$, and for a compact set $\mathcal{K}\subset E_T$
introduce the parabolic $p$-distance from $\mathcal{K}$ to $\Gm$ by
\begin{equation*}
	\begin{aligned}
		\dist_p(\mathcal{K};\,\Gm)&\df{=}\inf_{\substack{(x,t)\in \mathcal{K}\\(y,s)\in\Gm}}
		\left\{|x-y|+|t-s|^{\frac1p}\right\}.
	\end{aligned}
\end{equation*}
For $\varrho>0$ let $K_\varrho(x_o)$ be the cube with center at $x_o\in\rn$
and edge $\varrho$. When $x_o=0$ we simply write $K_\varrho$.
We define backward cylinders scaled by a positive parameter $\theta$ by
\begin{equation*}
	(x_o,t_o)+Q_\varrho(\theta)=
	(x_o,t_o)+ K_\varrho (0)\times (-\theta\varrho^p,0]
	=
	K_{\varrho}(x_o)\times(t_o-\theta\varrho^p,t_o].
\end{equation*}
If $\theta=1$, we simply write $Q_\varrho$. 

We postpone the formal definition of local weak solution to
Section~\ref{S:1:2}. It is however noteworthy to mention here 
that local boundedness of local weak solutions is inherent
in our notion of local solution (cf. Section~\ref{S:1:4:2}). Thus we may always
work with locally bounded solutions.

Now we state our main result concerning the interior H\"older continuity of weak solutions
to \eqref{Eq:1:1}, subject to the structure conditions \eqref{Eq:1:2p}.
\begin{theorem}\label{Thm:1:1}
	Let $u$ be a bounded, local, weak solution to \eqref{Eq:1:1} -- \eqref{Eq:1:2p} in $E_T$.
	Then $u$ is locally H\"older continuous in $E_T$. More precisely,
	there exist constants $\boldsymbol\gm>1$ and $\be\in(0,1)$ that can be determined a priori
	only in terms of the data, such that for every compact set $\mathcal{K}\subset E_T$,
	\begin{equation*}
	\big|u(x_1,t_1)-u(x_2,t_2)\big|
	\le
	\boldsymbol \gm\|u\|_{\infty,E_T}
	\left(\frac{|x_1-x_2|+|t_1-t_2|^{\frac1p}}{\dist_p(\mathcal{K};\Gm)}\right)^{\be},
	\end{equation*}
for every pair of points $(x_1,t_1), (x_2,t_2)\in \mathcal{K}$.
\end{theorem}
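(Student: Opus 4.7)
The plan is to establish an oscillation decay estimate via De Giorgi's method with intrinsic scaling, and then iterate it in the standard way to obtain Hölder continuity. All genuinely new difficulties for signed solutions stem from the fact that the time part $|u|^{p-2}u$ is not bilipschitz in $u$ near $u=0$, so the "time scale" seen by the equation depends on how far $u$ is from zero.

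First, I would fix a point $(x_o,t_o)\in E_T$, choose $\varrho>0$ small enough that an intrinsic cylinder $(x_o,t_o)+Q_\varrho(\theta)$ sits inside $E_T$, and set
\begin{equation*}
\mu^+=\sup_{Q_\varrho(\theta)} u,\qquad \mu^-=\inf_{Q_\varrho(\theta)} u,\qquad \boldsymbol\omega=\mu^+-\mu^-.
\end{equation*}
The intrinsic parameter $\theta$ should be tied to $\boldsymbol\omega$ in a way compatible with the scaling of $\partial_t(|u|^{p-2}u)-\Delta_p u$; morally $\theta\sim\boldsymbol\omega^{2-p}$, but for signed solutions one really has to look at $|\mu^\pm|^{p-2}\boldsymbol\omega^{2-p}$ near a change of sign and split the argument according to which case occurs (solutions "away from zero" versus solutions "straddling zero"). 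Because we already know $u$ is locally bounded, we may also assume $\boldsymbol\omega\le \|u\|_{\infty,E_T}$ throughout.

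Next I would run the standard De Giorgi alternative on $Q_\varrho(\theta)$: either the sub-level set $\{u<\mu^-+\tfrac14\boldsymbol\omega\}$ has small relative measure in $Q_\varrho(\theta)$, or it does not, in which case (by applying the same argument to $-u$, which still solves an equation of the same structural type) the super-level set $\{u>\mu^+-\tfrac14\boldsymbol\omega\}$ is small. In the "small-measure" case a De Giorgi-type $L^\infty$-to-$L^p$ lemma, proved by iteration on the truncations $(u-k)_\pm$ in the energy estimates implied by \eqref{Eq:1:2p}, yields a pointwise improvement on a smaller sub-cylinder. In the complementary case one first finds a time slice on which the set where $u$ is separated from $\mu^-$ has a definite measure, and then invokes the expansion of positivity for super-solutions to propagate this measure information forward in time to full pointwise positivity on a smaller intrinsic cylinder. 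In either branch one obtains a quantitative oscillation reduction $\operatorname{osc} u \le \eta\,\boldsymbol\omega$ on a cylinder $Q_{\lambda\varrho}(\theta')$ with $\eta\in(0,1)$, $\lambda\in(0,1)$ depending only on the data.

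The main obstacle is precisely this expansion-of-positivity step in the doubly nonlinear, signed setting: the intrinsic time length $\theta\varrho^p$ that makes the De Giorgi energy inequalities scale correctly depends on $\boldsymbol\omega$, and when $u$ is allowed to change sign the natural rescaling $v(x,t)=u(x_o+\varrho x,t_o+\theta\varrho^p t)/\boldsymbol\omega$ no longer produces an equation of the same form unless the $\theta$'s are chosen carefully. I expect the argument to separate the regimes $\mu^+\mu^-\ge 0$ (one-signed, where the classical Trudinger/DiBenedetto type analysis applies with $\theta\simeq(\tfrac{\boldsymbol\omega}{2})^{2-p}$) from $\mu^+\mu^-<0$ (sign-changing, where one exploits that $|u|^{p-2}u$ is flat near zero to obtain a cheap oscillation gain), and to handle each via the tools developed in the earlier sections of the paper.

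Finally, once an oscillation decay of the form $\boldsymbol\omega_{n+1}\le\eta\,\boldsymbol\omega_n$ on suitably nested intrinsic cylinders $Q_{\varrho_n}(\theta_n)$ with $\varrho_{n+1}=\lambda\varrho_n$ is established, a standard geometric iteration, together with the bound $\boldsymbol\omega_n\le\|u\|_{\infty,E_T}\lambda^{n\beta}$ for a suitable $\beta\in(0,1)$, produces a modulus of continuity in the parabolic distance $|x_1-x_2|+|t_1-t_2|^{1/p}$ and hence the claimed Hölder estimate on any compact $\mathcal{K}\subset E_T$, with $\boldsymbol\gamma$ and $\beta$ depending only on the data.
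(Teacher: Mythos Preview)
Your overall architecture---De Giorgi alternative, expansion of positivity, then iteration---is the right one, and it matches the paper. The gap is in your case split. You propose to separate $\mu^+\mu^-\ge 0$ (one-signed) from $\mu^+\mu^-<0$ (sign-changing), treat the former by the ``classical'' theory with $\theta\simeq(\boldsymbol\omega/2)^{2-p}$, and call the latter cheap. Both of these are off. The one-signed case with, say, $0<\mu^-\ll\boldsymbol\omega$ is \emph{not} covered by any classical argument: the usual reduction to a parabolic $p$-Laplacian goes through $v=u/\mu^-$ and $w=v^{p-1}$, and the structure constants of the resulting equation depend on $\sup v=\mu^+/\mu^-$, which blows up unless $\mu^-$ is bounded below by a fixed multiple of $\boldsymbol\omega$. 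Likewise the scaling $\theta\sim\boldsymbol\omega^{2-p}$ is not the one dictated by the equation; as the paper's Remark~\ref{Rmk:4:4} explains, the natural time scale is $(\boldsymbol\omega/|\mu^\pm|)^{2-p}$, which is $\simeq 1$ when $|\mu^\pm|$ and $\boldsymbol\omega$ are comparable.

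The paper's dichotomy is therefore not ``one-signed vs.\ sign-changing'' but rather ``$|\mu^\pm|\lesssim\xi\boldsymbol\omega$'' (near zero) versus ``$\mu^->\xi\boldsymbol\omega$ or $\mu^+<-\xi\boldsymbol\omega$'' (away from zero), for a structural constant $\xi\in(0,1)$. The \emph{away-from-zero} case is the easy one: $v=u/\mu^-$ is trapped in $[1,C]$, $w=v^{p-1}$ solves a genuine parabolic $p$-Laplacian type equation with uniform ellipticity constants, and Proposition~\ref{Prop:5:1} finishes. The \emph{near-zero} case---which contains every sign-changing situation but also the one-signed situations with $\mu^-$ small---is the delicate one, handled via the expansion of positivity (Proposition~\ref{Prop:1:1}) on standard cylinders $Q_\varrho$; for $p>2$ the either-or alternative in that proposition forces additional sub-cases (Sections~\ref{S:6:2}--\ref{S:6:3}) and intrinsic scaling does enter, but only there. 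Your intuition that ``$|u|^{p-2}u$ flat near zero gives a cheap gain'' is backwards: for $p>2$ the flatness is a degeneracy that makes this the hard case, not the easy one.
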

\begin{remark}\label{Rmk:1:1}\upshape
We have stated Theorem~\ref{Thm:1:1} for globally bounded weak solutions.
However the proof has a local thrust. As a matter of fact, we will show the following
oscillation decay:
\[
\essosc_{(x_o,t_o)+Q_r}u\le\boldsymbol\gm\essosc_{(x_o,t_o)+Q_\rho}u\,\left(\frac{r}{\rho}\right)^\be,
\]
for any pair of cylinders $(x_o,t_o)+Q_r\Subset(x_o,t_o)+Q_\rho\Subset E_T$.
The conclusion of Theorem~\ref{Thm:1:1} can be derived from this oscillation estimate
via a standard covering argument.
\end{remark}

The oscillation decay in Remark~\ref{Rmk:1:1}, while local in nature, has a global implication.
Indeed, let $u$ be a {\it bounded}, local weak solution to \eqref{Eq:1:1} -- \eqref{Eq:1:2p} in the semi-infinite strip
$\mathcal{S}_T:=\rr^N\times(-\infty,T)$ for some $T\in\rr$. Then we have
\[
\essosc_{(x_o,t_o)+Q_r}u\le\boldsymbol\gm\|u\|_{\infty, \mathcal{S}_T}\,\left(\frac{r}{\rho}\right)^\be,
\]
for any pair of cylinders $(x_o,t_o)+Q_r\Subset(x_o,t_o)+Q_\rho\Subset \mathcal{S}_T$.
Now fixing $r$ and letting $\rho\to\infty$, we immediately arrive at a Liouville-type result.
\begin{corollary}\label{Cor:1:1}
A bounded, local weak solution to \eqref{Eq:1:1} -- \eqref{Eq:1:2p} in $\mathcal{S}_T$
must be a constant.
\end{corollary}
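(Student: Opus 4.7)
The plan is to use the displayed oscillation estimate stated immediately above Corollary~\ref{Cor:1:1}, namely
\[
\essosc_{(x_o,t_o)+Q_r}u\le\boldsymbol\gm\|u\|_{\infty, \mathcal{S}_T}\,\left(\frac{r}{\rho}\right)^\be,
\]
with $r$ fixed and $\rho$ sent to infinity. The first step is to observe that since $\mathcal{S}_T=\rn\times(-\infty,T)$ is unbounded both in space and backward in time, for any vertex $(x_o,t_o)\in\mathcal{S}_T$ the backward cylinder $(x_o,t_o)+Q_\rho=K_\rho(x_o)\times(t_o-\rho^p,t_o]$ lies entirely inside $\mathcal{S}_T$ for every $\rho>0$. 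Consequently, the oscillation estimate is available with $\rho$ arbitrarily large.

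The second step is a standard Liouville-type extraction. Given any two points $(x_1,t_1),(x_2,t_2)\in\mathcal{S}_T$, I would choose $t_o:=\max\{t_1,t_2\}$ (which is still $\le T$), and then pick $x_o\in\rn$ together with $r>0$ large enough that both points lie in $(x_o,t_o)+Q_r$; any $r$ such that $K_r(x_o)\supset\{x_1,x_2\}$ and $r^p\ge t_o-\min\{t_1,t_2\}$ suffices. With this $r$ now fixed, sending $\rho\to\infty$ in the oscillation estimate forces
\[
\essosc_{(x_o,t_o)+Q_r}u=0,
\]
because $\|u\|_{\infty,\mathcal{S}_T}<\infty$ by hypothesis and $\be>0$. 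Hence $u(x_1,t_1)=u(x_2,t_2)$, and by the arbitrariness of the pair, $u$ is constant throughout $\mathcal{S}_T$.

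There is essentially no analytic obstacle here; the whole content of Corollary~\ref{Cor:1:1} is geometric. The only thing to double-check is that backward cylinders of arbitrary radius centered at a fixed vertex in $\mathcal{S}_T$ remain inside $\mathcal{S}_T$, which is immediate from the shape of the strip: the parabolic $p$-distance from any compact $\mathcal{K}\subset\mathcal{S}_T$ to the parabolic boundary of $\mathcal{S}_T$ is infinite, so Remark~\ref{Rmk:1:1} applied on larger and larger cylinders forces the oscillation on any fixed cylinder to vanish. No fresh use of the PDE is needed; the corollary is the oscillation decay read in the limit $\rho\to\infty$.
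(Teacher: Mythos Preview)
Your argument is correct and is exactly the approach the paper takes: the corollary is derived immediately from the oscillation estimate by fixing $r$ and letting $\rho\to\infty$, using that backward cylinders of arbitrary size with vertex in $\mathcal{S}_T$ remain in $\mathcal{S}_T$. Your write-up simply makes explicit the choice of a cylinder containing two given points, which the paper leaves implicit.
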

\begin{remark}\upshape
One-sided boundedness of solutions in $\mathcal{S}_T$ is generally not sufficient to imply they are constants.
This is evident from the non-negative solution $u(x,t)=e^{x+t}$ to the one dimensional heat equation.
\end{remark}
\begin{remark}\upshape
The global boundedness condition in Corollary~\ref{Cor:1:1} can be easily relaxed to allow $u$ to grow
slower than $(|x|+|t|^{\frac1p})^\be$ as $|x|\to\infty$ and $t\to-\infty$.
Other variants of Liouville-type results may be obtained, for which we refer to \cite{DBGV-Liouville, DBGV-mono}.
\end{remark}
\subsection{Boundary Regularity}\label{S:boundary}
We will establish regularity of weak solutions to \eqref{Eq:1:1} -- \eqref{Eq:1:2p}
up to the lateral boundary $S_T:=\pl E\times(0,T]$,
provided the solution satisfies proper Dirichlet or Neumann boundary data
and $\pl E$ possesses certain geometry or smoothness.
Likewise, regularity of weak solutions up to the initial level $t=0$ can
also be obtained, provided the given initial value is regular enough.

The arguments employed will be local in nature. As a result, it suffices to require the boundary 
data to be taken just on a portion of the parabolic boundary.
Nevertheless, we choose to present the results globally for simplicity,
in terms of initial-boundary value problems.

To this end, let us first consider formally the following initial-boundary value problem of Dirichlet type:
\begin{equation}\label{Dirichlet}
\left\{
\begin{array}{c}
	\partial_t\big(|u|^{p-2}u\big)-\dvg\bl{A}(x,t,u, Du) = 0\quad \mbox{weakly in $ E_T$,}\\[7pt]
	u(\cdot,t)\Big|_{\partial E}=g(\cdot,t)\Big|_{\partial E}\quad \mbox{for a.e.~$ t\in(0,T]$,}\\[7pt]
	u(\cdot,0)=u_o(\cdot),
\end{array}
\right.
\end{equation}
where the structure conditions \eqref{Eq:1:2p} are retained. 
Regarding the Dirichlet datum $g$ and the initial datum $u_o$ we assume
\begin{align}
\tag{\bf D}\label{D} & \mbox{$\dsty g\in L^p\big(0,T;W^{1,p}( E)\big)$, and $g$ is continuous on $S_T$ with modulus of continuity
  				$\om_g(\cdot)$;} \\
\tag{\bf{I}}\label{I} & \mbox{$u_o$ is continuous in $\overline{E}$ with modulus of continuity $\om_{o}(\cdot)$.}
\end{align}
In order to establish H\"older regularity of $u$ up to $S_T$,
we need to impose some geometric conditions on $\pl E$.
For this purpose, we introduce the property of {\it positive geometric density}
of $\pl E$, i.e.,
\begin{equation}\label{geometry}
\left\{\;\;
	\begin{minipage}[c][1.5cm]{0.7\textwidth}
	there exists $\al_*\in(0,1)$ and $\rho_o>0$, such that for all $x_o\in\pl E$,
	for every cube $K_\rho(x_o)$ and $0<\rho\le\rho_o$, there holds
	$$
	|E\cap K_{\rho}(x_o)|\le(1-\al_*)|K_\rho|.
	$$
	\end{minipage}
\right.
\end{equation}
Intuitively, this means one can place an exterior cone whose vertex is attached to $x_o$ (uniformly
with respect to $x_o$). 

Next, we consider the Neumann problem. In order to deal with possible variational data on $S_T$,
we assume $\pl E$ is of class $C^1$, 
such that the outward unit normal, which we denote by {\bf n},
is defined on $\pl E$.
Let us consider the initial-boundary value problem of Neumann type:
\begin{equation}\label{Neumann}
\left\{
\begin{array}{c}
	\partial_t\big(|u|^{p-2}u\big)-\dvg\bl{A}(x,t,u, Du) = 0\quad \mbox{weakly in $ E_T$,}\\[5pt]
	\bl{A}(x,t,u, Du)\cdot {\bf n}=\psi(x,t, u)\quad \mbox{on $S_T$,}\\[5pt]
	u(\cdot,0)=u_o(\cdot),
\end{array}
\right.
\end{equation}
where the structure conditions \eqref{Eq:1:2p} and the initial condition $({\bf I})$ are retained. 
On the Neumann datum $\psi$ we assume for simplicity that, for some absolute constant $C_2$,
there holds
\begin{equation}\label{N-data}\tag{\bf{N}}
|\psi(x,t, u)|\le C_2\quad \text{ for a.e. }(x,t, u)\in S_T\times\rr.
\end{equation}
More general conditions should also work (cf.~Sec. 2, Chap. II, \cite{DB}).
The formal definitions of weak solutions to \eqref{Dirichlet} and \eqref{Neumann} will be given in Section~\ref{S:1:2}.
Now we are ready to present the results concerning regularity of solutions to \eqref{Dirichlet}
or \eqref{Neumann} up to the parabolic boundary $\Gm$.
\subsubsection{Near the Initial Time}
\begin{theorem}\label{Thm:1:2}
Let $u$ be a bounded weak solution to
the Dirichlet problem 
\eqref{Dirichlet} under the assumption \eqref{Eq:1:2p}.
Assume \eqref{I} holds. Then $u$ is continuous in $K\times[0,T]$ for any compact set $K\subset E$.
More precisely, there is a modulus of continuity $\boldsymbol\om(\cdot)$,
determined by the data, $\dist(K,\pl E)$, $\|u\|_{\infty,E_T}$ and $\boldsymbol\om_{o}(\cdot)$, such that
	\begin{equation*}
	\big|u(x_1,t_1)-u(x_2,t_2)\big|
	\le
	\boldsymbol\om\!\!\left(|x_1-x_2|+|t_1-t_2|^{\frac1p}\right),
	\end{equation*}
for every pair of points $(x_1,t_1), (x_2,t_2)\in K\times[0,T]$.
In particular, if $u_o$ is H\"older continuous with exponent $\be_{o}$,
then $\boldsymbol\om(r)=\boldsymbol\gm r^{\be} \|u\|_{\infty,E_T}$ with some $\boldsymbol\gm>0$ and 
$\be\in(0,\be_{o}]$ depending on the data, $\dist(K,\pl E)$ and $\be_{o}$.
\end{theorem}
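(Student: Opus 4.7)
The plan is to complement the interior oscillation decay of Remark~\ref{Rmk:1:1} with an analogous oscillation decay near the initial slice $t=0$, in which the modulus $\boldsymbol\om_o$ of the initial datum plays the role that the interior oscillation plays in the bulk. Since $K\subset E$ is compact, the lateral Dirichlet datum $g$ enters nowhere in the argument; only the initial condition \eqref{I} matters. Fix $x_o\in K$, set $d_o:=\dist(K,\pl E)$, and for $\rho\le d_o/4$ I would consider the forward cylinder
\[
Q_\rho^+:=K_\rho(x_o)\times[0,\theta\rho^p]\subset E_T,
\]
where $\theta>0$ is an intrinsic scaling parameter to be calibrated in terms of $\|u\|_{\infty,E_T}$. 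Denote by $\mu^\pm$ the essential supremum and infimum of $u$ on $Q_\rho^+$ and set $\om:=\mu^+-\mu^-$.

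Next, I would run the following initial alternative. Either $\om\le 4\boldsymbol\om_o(\rho)$, in which case the oscillation is already controlled by the datum, or $\om>4\boldsymbol\om_o(\rho)$ and the uniform continuity of $u_o$ on $K_\rho(x_o)$ forces one of the two inequalities $u_o\le \mu^-+\tfrac12\om$ or $u_o\ge \mu^+-\tfrac12\om$ to hold on the entire initial slice. In the first case I apply De~Giorgi iteration to the truncations $(u-k)_+$ with $k=\mu^+-\om/2^{j}$, working forward in time on $Q_\rho^+$ starting from $t=0$. Because the initial trace of $(u-k)_+$ on $K_\rho(x_o)\times\{0\}$ vanishes, the energy inequality picks up no obstructing initial boundary term, and the standard measure-theoretic De~Giorgi lemma delivers the reduction
\[
\essosc_{K_{\rho/2}(x_o)\times[0,\theta(\rho/2)^p]}u\le (1-\eta)\om
\]
for some $\eta\in(0,1)$ depending only on the data. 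The symmetric alternative is handled identically via $(k-u)_+$. The parameter $\theta$ must be chosen as a suitable power of $\om/\|u\|_{\infty,E_T}$ so that the $p$-scaling of the energy matches the intrinsic scaling of the time term $\partial_t(|u|^{p-2}u)$, exactly as in the interior proof of Theorem~\ref{Thm:1:1}.

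Combining the two alternatives produces the recursion
\[
\essosc_{Q_{\rho/2}^+}u\le\max\bigl\{4\boldsymbol\om_o(\rho),\,(1-\eta)\om\bigr\},
\]
which I would iterate on a geometric sequence of radii $\rho_j=2^{-j}\rho_0$ to build a modulus of continuity $\boldsymbol\om$ at $(x_o,0)$ determined by the data, $\|u\|_{\infty,E_T}$, $d_o$ and $\boldsymbol\om_o$. If $u_o$ is H\"older continuous with exponent $\be_o$, the recursion closes geometrically and yields $\boldsymbol\om(r)=\boldsymbol\gm r^\be\|u\|_{\infty,E_T}$ for some $\be\in(0,\be_o]$. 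To pass from this oscillation estimate at $(x_o,0)$ to the continuity estimate for arbitrary pairs $(x_1,t_1),(x_2,t_2)\in K\times[0,T]$, I would combine Remark~\ref{Rmk:1:1} (for pairs whose times are bounded away from $0$ relative to the intrinsic scale) with the initial-time estimate centered at $(x_i,0)$ otherwise, stitching the two regimes by a routine interpolation argument.

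The principal technical obstacle is the De~Giorgi step above. For $p\neq2$ the time term $\partial_t(|u|^{p-2}u)$ is genuinely nonlinear, and for sign-changing solutions the map $s\mapsto|s|^{p-2}s$ is not homogeneous in a way that decouples the truncation level from the intrinsic geometry; the correct choice of $\theta$ differs depending on whether the truncation level $k$ lies near $\mu^+$ or near $\mu^-$, and additional care is required when $\mu^+$ and $\mu^-$ straddle zero. Handling these cases uniformly, in parallel to the interior argument, is the heart of the proof.
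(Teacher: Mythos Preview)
Your overall architecture---compare the oscillation with the modulus of the initial datum, observe that one of the two one-sided bounds $u_o\le\mu^-+\tfrac12\om$ or $u_o\ge\mu^+-\tfrac12\om$ holds on the whole initial slice, run a forward De~Giorgi iteration in which the energy estimate carries no initial boundary term (this is exactly Proposition~\ref{Prop:2:2}), and then iterate---matches the paper's proof. The recursion you write and the passage to a H\"older modulus when $u_o$ is H\"older are also the right endgame.

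Where you diverge from the paper, and where there is a genuine gap, is in the treatment of the nonlinearity $\partial_t(|u|^{p-2}u)$. You propose to absorb the sign issue by choosing $\theta$ as ``a suitable power of $\om/\|u\|_{\infty,E_T}$'' and assert this is ``exactly as in the interior proof of Theorem~\ref{Thm:1:1}''. That is not what the interior proof does, and it is not clear such a single intrinsic $\theta$ can be chosen to make the De~Giorgi step close uniformly across iterations. The paper instead runs the same two-case dichotomy \eqref{Hp-main-initial} as in the interior: \emph{near zero} ($\mu^-\le\om$ and $\mu^+\ge-\om$, hence $|\mu^\pm|\le2\om$) versus \emph{away from zero}. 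In the near-zero case the factor $(|u|+|k|)^{p-2}$ in $\mathfrak g_\pm$ is comparable to $\om^{p-2}$, and the De~Giorgi iteration goes through on forward cylinders with a \emph{fixed} $\theta=\nu_o$ depending only on the data---no intrinsic scaling tied to $\|u\|_\infty$ is needed. One iterates this as long as the near-zero condition persists. At the first index $j$ where the away-from-zero condition holds, one has $\om_j<\mu_j^-\le\tfrac97\om_j$ (or the analogue for $\mu_j^+$), rescales $v=u/\mu_j^-$, $w=v^{p-1}$, and reduces to a genuine parabolic $p$-Laplacian equation with structure constants depending only on the data; the initial-time oscillation decay for that equation (Proposition~\ref{Prop:7:1}) then finishes the job.

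So your sketch needs two repairs: replace the global intrinsic parameter $\theta(\om/\|u\|_\infty)$ by the near-zero/away-from-zero split, and in the away-from-zero branch invoke the known initial-regularity theory for the parabolic $p$-Laplacian rather than trying to push the doubly nonlinear De~Giorgi step through directly.
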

\subsubsection{Near $S_T$--Dirichlet Type Data}
\begin{theorem}\label{Thm:1:3}
Let $u$ be a bounded weak solution to the Dirichlet problem 
\eqref{Dirichlet} under the assumption \eqref{Eq:1:2p}. Assume \eqref{D} and \eqref{geometry} hold. Then $u$ is continuous in any compact set 
$\mathcal{K}\subset\overline{E}_T$.
More precisely, there is a modulus of continuity $\boldsymbol\om(\cdot)$,
determined by the data, $\al_*$, $\rho_o$, $\dist(\mathcal{K};\{t=0\})$, $\|u\|_{\infty,E_T}$ and $\boldsymbol\om_{g}(\cdot)$, such that
	\begin{equation*}
	\big|u(x_1,t_1)-u(x_2,t_2)\big|
	\le
	\boldsymbol\om\!\!\left(|x_1-x_2|+|t_1-t_2|^{\frac1p}\right),
	\end{equation*}
for every pair of points $(x_1,t_1), (x_2,t_2)\in \mathcal{K}$.
In particular, if $g$ is H\"older continuous with exponent $\be_{g}$,
then $\boldsymbol\om(r)=\boldsymbol\gm r^{\be} \|u\|_{\infty,E_T}$ with some $\boldsymbol\gm>0$ and 
$\be\in(0,\be_{g}]$ depending on the data, $\al_*$, $\rho_o$, $\dist(\mathcal{K};\{t=0\})$ and $\be_{g}$.
\end{theorem}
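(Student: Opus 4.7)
My plan is to combine the interior oscillation decay from Remark~\ref{Rmk:1:1} with a separate analysis at points on the lateral boundary $S_T$, while the initial face is already handled by Theorem~\ref{Thm:1:2}. A standard covering/joining argument then produces the claimed modulus $\boldsymbol\om$ on any compact $\mathcal{K}\subset\overline{E}_T$ with positive distance from $\{t=0\}$. So the main work is to establish, for any $(x_o,t_o)\in S_T$, an oscillation reduction of the form
\[
\essosc_{((x_o,t_o)+Q_{r})\cap E_T}u\le\boldsymbol\gm\,\Bigl[\essosc_{((x_o,t_o)+Q_{\rho})\cap E_T}u+\om_g(\rho)\Bigr]\bigl(r/\rho\bigr)^{\be},
\]
for suitably intrinsically scaled cylinders, from which the general modulus of continuity follows by iteration, and the H\"older case by choosing $\om_g(r)=\boldsymbol\gm r^{\be_g}\|u\|_{\infty,E_T}$.

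The crucial tool is to extend the truncations across $\partial E$ using the Dirichlet datum $g$. Fix $(x_o,t_o)\in S_T$ and set
\[
\mu^{+}\df{=}\esssup_{((x_o,t_o)+Q_\rho)\cap E_T}u,\quad\mu^{-}\df{=}\essinf_{((x_o,t_o)+Q_\rho)\cap E_T}u,\quad\om\df{=}\mu^{+}-\mu^{-}.
\]
For levels $k$ chosen so that $k\ge\sup_{S_T\cap((x_o,t_o)+Q_\rho)}g$ (respectively $k\le\inf$), the truncation $(u-k)_{+}$ (resp.\ $(u-k)_{-}$) vanishes on the lateral portion of the boundary of $E_T\cap((x_o,t_o)+Q_\rho)$, so the standard energy/Caccioppoli estimates for DNL equations\,---\,the same ones used in the interior proof of Theorem~\ref{Thm:1:1}\,---\,apply verbatim, with the only modification that test functions are now allowed to be nonzero across $\partial E$. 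Such admissible $k$'s exist as long as $\om\gg\om_g(\rho)$, a condition we may assume (else the desired decay holds trivially).

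At this point the positive geometric density~\eqref{geometry} enters decisively. Extending $u$ by $g$ outside $E$, the set $K_\rho(x_o)\setminus E$ has measure at least $\al_*|K_\rho|$ and, on it, $|u-g(x_o,t_o)|\le\om_g(\rho)$. Consequently, whenever $\om\gg\om_g(\rho)$, either $\{u\le\mu^{-}+\tfrac14\om\}$ or $\{u\ge\mu^{+}-\tfrac14\om\}$ occupies a fraction of $K_\rho(x_o)$ bounded below by $\tfrac12\al_*$, uniformly on a suitable time slice. This is precisely the measure-theoretic input that the interior De Giorgi iteration needs in order to propagate a positivity set to a pointwise bound, so the shrinking-of-essential-oscillation mechanism developed for Theorem~\ref{Thm:1:1} transports to the boundary setting without needing a ``measure-theoretic lemma'' of Krylov--Safonov type. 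As in the interior analysis, one runs the argument in two alternatives depending on which of $\mu^{\pm}$ is closer to $0$, to handle the doubly nonlinear, sign-changing nature of the equation, and with intrinsic cylinders scaled by $\theta\sim\om^{2-p}$ (together with the corresponding modification when $p<2$).

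I expect the main obstacle to be the bookkeeping of the intrinsic scaling when it must coexist with $\om_g$. Each oscillation reduction step is of the form $\om_{n+1}\le\eta\om_{n}+C\om_g(\rho_n)$ along a geometric sequence $\rho_n=\lambda^n\rho_0$ with $\lambda$ depending on the intrinsic time-scale $\theta$, which itself depends on $\om_n$. One must verify that the recursion iterates cleanly: this is by now classical for the $p$-Laplacian (see Chap.~III of~\cite{DB}), but here the interplay of $|u|^{p-2}u$ with the boundary truncations requires checking that the energy estimates degrade only by additive terms controlled by $\om_g(\rho)$, uniformly in $\rho\le\rho_o$. Once this is in hand, the iteration yields $\boldsymbol\om(r)$ depending only on the stated quantities, and the H\"older specialization with exponent $\be\in(0,\be_g]$ follows from the geometric comparison $\om_g(\rho)=\boldsymbol\gm\rho^{\be_g}\|u\|_{\infty,E_T}$ against the geometric factor produced by the iteration.
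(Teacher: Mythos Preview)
Your proposal is correct and follows essentially the same route as the paper: restrict the truncation levels via \eqref{Eq:3:3}, use the boundary energy estimates (Proposition~\ref{Prop:2:3}), exploit the geometric density \eqref{geometry} to supply the measure-theoretic input, and run the near-zero/away-from-zero dichotomy as in the interior proof, with the latter case handed off to the known boundary theory for the parabolic $p$-Laplacian (Proposition~\ref{Prop:7:2}).

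Two points where the paper is sharper than your sketch. First, the paper extends $(u-k)_\pm$ by zero across $\partial E$ (cf.\ Lemma~\ref{Lm:A:2}), not ``$u$ by $g$''; note that $g$ is only defined on $S_T$, not on $K_\rho(x_o)\setminus E$, so your formulation is not quite well posed. Second, and more substantively, the geometric density \eqref{geometry} gives $|\{(u(\cdot,t)-k)_-=0\}\cap K_\rho(x_o)|\ge\al_*|K_\rho|$ at \emph{every} time $t$, not merely on ``a suitable time slice''; this is the key simplification at the lateral boundary, since it eliminates the propagation-in-time step (Lemma~\ref{Lm:3:1}) and feeds directly into the shrinking lemma and the De Giorgi iteration. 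Your intrinsic scaling remark $\theta\sim\boldsymbol\om^{2-p}$ is also slightly off for this equation in the near-zero regime; the paper uses universal $\theta$'s there, and the $\boldsymbol\om^{2-p}$ scaling enters only after passing to the $p$-Laplacian in the away-from-zero case.
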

\subsubsection{Near $S_T$--Neumann Type Data}
\begin{theorem}\label{Thm:1:4}
Let $u$ be a bounded weak solution to the Neumann problem
\eqref{Neumann}.
 Assume $\pl E$ is of class $C^1$ and \eqref{N-data} holds. 
 Then $u$ is H\"older continuous in any compact set 
$\mathcal{K}\subset\overline{E}_T$.
More precisely, there exist constants $\boldsymbol\gm>1$ and $\be\in(0,1)$
determined by the data, $C_2$, $\dist(\mathcal{K};\{t=0\})$ and the structure of $\pl E$, such that
	\begin{equation*}
	\big|u(x_1,t_1)-u(x_2,t_2)\big|
	\le
	\boldsymbol \gm
	\|u\|_{\infty,E_T}\,
	\left(|x_1-x_2|+|t_1-t_2|^{\frac1p}\right)^\be,
	\end{equation*}
for every pair of points $(x_1,t_1), (x_2,t_2)\in \mathcal{K}$.
\end{theorem}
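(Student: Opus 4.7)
The plan is to mimic the interior oscillation decay from Remark~\ref{Rmk:1:1} in a \emph{half-cylinder} attached to the lateral boundary $S_T$, with two extra ingredients: a local flattening of $\partial E$ by a $C^1$ diffeomorphism, and a careful accounting of the surface contribution coming from the Neumann datum $\psi$. Once a boundary oscillation decay of the same form as in Remark~\ref{Rmk:1:1} is established, a standard covering argument combined with the interior estimate yields continuity in every compact $\mathcal{K}\subset\overline{E}_T$, and H\"older continuity follows from the usual dyadic iteration of the decay.

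Near a boundary point $x_o\in\partial E$ I would invoke the $C^1$ regularity of $\partial E$ to introduce a local diffeomorphism $\Psi$ straightening $\partial E\cap K_\rho(x_o)$ onto a piece of the hyperplane $\{y_N=0\}$. The transformed equation inherits the structure \eqref{Eq:1:2p} with new constants depending only on the data and on $\|D\Psi\|_\infty$, $\|D\Psi^{-1}\|_\infty$, so the entire De~Giorgi and intrinsic-scaling machinery used for Theorem~\ref{Thm:1:1} can be transplanted into a half-cube $K^{+}_\rho$. The Neumann prescription translates into a conormal condition on the flat face whose right-hand side is still bounded by a constant that depends only on $C_2$ and on the $C^1$-character of $\partial E$.

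The substantive work is to re-derive in this half-cylinder the two boundary building blocks used in the interior proof: the Caccioppoli-type energy estimate for the truncations $(u-k)_\pm$ and the expansion of positivity. In both cases the cut-off functions are chosen with compact support only in the directions in which the boundary is not flat, so they do \emph{not} need to vanish on the Neumann face. The only new contribution appearing after integration by parts is
\[
\int_{S_T\cap\,\mathrm{spt}\,\zeta}\psi(x,t,u)\,(u-k)_\pm\,\zeta^p\,\d\sigma\dt,
\]
which by \eqref{N-data} is bounded by $C_2\|\zeta\|_\infty^p(\essosc u)|S_T\cap\mathrm{spt}\,\zeta|$. A trace inequality shows that this boundary term is of lower order with respect to the coercive $L^p$-gradient term and can therefore be absorbed via Young's inequality into $C_o\int|Du|^p$. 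Consequently the De~Giorgi alternatives, the measure-shrinking lemma, and the intrinsic expansion of positivity survive at the boundary modulo this controllable perturbation; the proof of the oscillation decay then proceeds as in the interior.

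The main obstacle I anticipate is the matching between the \emph{intrinsic} parabolic cylinders $Q_\rho(\theta)$, whose time scale $\theta$ is adapted to the (possibly small) local oscillation of $u$, and the \emph{fixed} $C^1$ flattening $\Psi$ that ignores $\theta$. For small intrinsic scales the absorption of the Neumann term against $C_o$ is only quantitative if the radius $\rho$ lies below a universal threshold determined by $C_2$, $C_o$, $\|u\|_{\infty,E_T}$ and the $C^1$-character of $\partial E$; this forces a stopping criterion for the iteration. Once this threshold is fixed, the boundary reduction
\[
\essosc_{(x_o,t_o)+Q_r} u
\le
\boldsymbol\gm\,\essosc_{(x_o,t_o)+Q_\rho} u\,\Big(\tfrac{r}{\rho}\Big)^{\be}
\]
holds for all $(x_o,t_o)\in S_T$ and all $r\le\rho$ below that threshold. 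Combining this with the interior decay of Remark~\ref{Rmk:1:1} and with the analogous initial-time estimate (obtained by the same Neumann adaptation of the argument behind Theorem~\ref{Thm:1:2}) produces the H\"older bound asserted on every compact $\mathcal{K}\subset\overline{E}_T$.
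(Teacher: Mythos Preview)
Your proposal is essentially correct and follows the same route as the paper: local $C^1$ flattening, the Neumann energy estimate of Proposition~\ref{Prop:2:4} (which already absorbs the surface integral via the trace inequality and Young, exactly as you describe), and an either--or alternative in which the extra term $\boldsymbol\gm C_2^{p/(p-1)}|A_n|$ is dominated by the leading term $\frac{2^{pn}}{\rho^p}M^p|A_n|$ unless $M\le\boldsymbol\gm\rho$, which is your ``stopping criterion.''

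One technical point you gloss over and that the paper singles out as the \emph{main} new feature: since your cutoffs no longer vanish on the flat Neumann face, the Sobolev embedding used in the De~Giorgi iteration (e.g.\ in the proof of Lemma~\ref{Lm:3:3}) cannot be the usual one for $W^{1,p}_0$; you must instead use an embedding for functions in $W^{1,p}$ that vanish only on part of the boundary (the paper cites \cite[Chapter~I, Proposition~3.2]{DB}). The embedding constant then depends on the ratio $T/|E|^{p/N}$, and you should note that this ratio is scale-invariant along the family $Q_\rho$ and $Q_\rho\cap E_T$, so the iteration closes. This is not a gap in your strategy, but it is the step where ``the De~Giorgi alternatives survive'' requires more than just absorbing a lower-order perturbation.
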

\subsection{Novelty and Significance}\label{S:NS}
The equation \eqref{Eq:1:1} -- \eqref{Eq:1:2p} has been referred to as a doubly nonlinear
parabolic equation in the literature, due to the nonlinearity of both the solution and its spatial gradient.
It is a particular form of a more general equation whose prototype is
\begin{equation}\label{general}
\partial_t\big(|u|^{m-1}u\big)-\Dl_p u=0,\quad p>1,\,m>0.
\end{equation}
The interest in such an equation stems from its mathematical structure,
in understanding doubly nonlinear phenomena that generate mixed types of degeneracy
and/or singularity in partial differential equations,
and its connection to physical models, including 
dynamics of glaciers (\cite{glacier}), shallow water flows (\cite{DSW1,DSW2,DSW3}) 
and friction dominated flow in a gas network (\cite{pipe}).

In particular, the prototype equation \eqref{prototype} is naturally connected to the nonlinear eigenvalue
problem $-\Dl_p u=\lm |u|^{p-2}u$ (cf. \cite{LL}), which plays an important role in the {\it nonlinear potential theory}.

The equation \eqref{Eq:1:1} -- \eqref{Eq:1:2p} has been observed by Trudinger (\cite{Trud0}), via Moser's iteration,
to possess a Harnack inequality for non-negative solutions, analogous to the one
for the heat equation. See also \cite{GV,KK}. Such a Harnack inequality has been used to
establish the interior H\"older regularity for non-negative solutions in \cite{Trud1,Trud2}. 

Our main contribution is to remove the sign restriction on solutions for the H\"older regularity to hold.
The Harnack inequality seems not applicable in this setting due to changing signs of solutions and the power-like
nonlinearity with respect to the solution itself. 
Instead, we employ a more basic tool -- expansion of positivity -- to handle the current situation.
Our approach emphasizes the different roles played by sub-solutions and super-solutions.
As a by-product, the expansion of positivity also leads to an alternative proof of the Harnack inequality.
See Appendix~\ref{Append:2}.
The interior estimates also give us a Liouville type result for global solutions,
which seems new in the literature.
Moreover, our approach is flexible enough to obtain the H\"older regularity of solutions
to the initial-boundary value problems of both Dirichlet type and Neumann type, 
up to the parabolic boundary. As far as we know, the boundary regularity has not ever been dealt with
in the literature even in the case of non-negative solutions.

Our proofs of H\"older regularity -- interior or boundary -- all unfold along two main cases, i.e., when the solution is close to zero
or when it is away from zero, through comparisons between the oscillation
and the supremum/infimum of the solution. In the first case, we will take advantage of the scaling invariant
property of the equation and obtain the expansion of positivity -- Proposition~\ref{Prop:1:1} -- without intrinsic scaling techniques.
This treatment parallels the classical parabolic theory ($p=2$) in \cite{LSU}, the new input being
that we need to trace the competition between the oscillation and the extrema of the solution (see Remark~\ref{Rmk:4:4}).
Whereas in the second case, the solution behaves like the one to the parabolic $p$-Laplacian equation, i.e.~$u_t=\Dl_p u$.
Thus this latter case hinges upon the possibility to treat such a degenerate ($p>2$) or singular ($1<p<2$) equation,
for which we exploit the existing theory in \cite{DB, DBGV-mono}.

The H\"older regularity for doubly nonlinear equations has also been considered in \cite{Ivanov-1, Ivanov-2, Ivanov-Mkrtychyan, Vespri, Vespri-Vestberg}, under various conditions on the structure of the equation. 
The local regularity theory for the doubly nonlinear equation \eqref{general}
seems fragmented and it deserves future investigations.

\subsection{Notations and Definitions}\label{S:1:2}

\subsubsection{Notion of Local Solution}\label{S:1:2:1}
A function
\begin{equation}  \label{Eq:1:3p}
	u\in C\big(0,T;L^p_{\loc}(E)\big)\cap L^p_{\loc}\big(0,T; W^{1,p}_{\loc}(E)\big)
\end{equation}
is a local, weak sub(super)-solution to \eqref{Eq:1:1} with the structure
conditions \eqref{Eq:1:2p}, if for every compact set $K\subset E$ and every sub-interval
$[t_1,t_2]\subset (0,T]$
\begin{equation}  \label{Eq:1:4p}
	\int_K |u|^{p-2}u\z \,\dx\bigg|_{t_1}^{t_2}
	+
	\iint_{K\times (t_1,t_2)} \big[-|u|^{p-2}u\z_t+\bl{A}(x,t,u,Du)\cdot D\z\big]\dx\dt
	\le(\ge)0
\end{equation}
for all non-negative test functions
\begin{equation*}
\z\in W^{1,p}_{\loc}\big(0,T;L^p(K)\big)\cap L^p_{\loc}\big(0,T;W_o^{1,p}(K)%
\big).
\end{equation*}
This guarantees that all the integrals in \eqref{Eq:1:4p} are convergent.

A function $u$ that is both a local weak sub-solution and a local weak super-solution
to \eqref{Eq:1:1} -- \eqref{Eq:1:2p} is a local weak solution.

\subsubsection{Notion of Parabolicity and Local Boundedness of Solutions}\label{S:1:4:2}
 For any $k\in\rr$, let
\[
(u-k)_-=\max\{-(u-k),0\},\qquad(u-k)_+=\max\{u-k,0\}.
\]
Accordingly, we notice that
\[
k-(u-k)_-=\min\{u,k\},\qquad k+(u-k)_+=\max\{u,k\}.
\]
Using \eqref{Eq:1:2p}$_1$ and employing a similar method as in
({\bf A}$_6$) of \cite[Chapter II]{DB} or Lemma~1.1 of \cite[Chapter 3]{DBGV-mono},
we can show that the equation \eqref{Eq:1:1} with \eqref{Eq:1:2p}
is {\it parabolic}, in the sense that 
\begin{equation*}
	\left\{
	\begin{aligned}
		&\mbox{whenever $u$ is a local weak sub(super)-solution,}\\ 
		&\mbox{the function $k\pm(u-k)_\pm$ is a local weak sub(super)-solution, for all $\ k\in\rr$.}
	\end{aligned}
	\right.
\end{equation*}
We will give a proof of this claim in Appendix~\ref{Append:1}.
In particular, when $u$ is a local weak solution,
 $u_{+}$ and $u_-$ are non-negative, local weak sub-solutions to \eqref{Eq:1:1} -- \eqref{Eq:1:2p}.
Since it has been shown that non-negative, local sub-solutions
are locally bounded, we may always work with locally bounded solutions.
See \cite{GV, KK} in this regard.

\subsubsection{Notion of Solution to the Dirichlet Problem}\label{S:1:4:3}
A function
\begin{equation*}  
	u\in C\big(0,T;L^p(E)\big)\cap L^p\big(0,T; W^{1,p}(E)\big)
\end{equation*}
is a weak sub(super)-solution to \eqref{Dirichlet}, 
if for every sub-interval
$[t_1,t_2]\subset (0,T]$,
\begin{equation*} 
\begin{aligned}
	\int_{E} |u|^{p-2}u\z \,\dx\bigg|_{t_1}^{t_2}
	&+
	\iint_{E\times(t_1,t_2)} \big[-|u|^{p-2}u\z_t+\bl{A}(x,t,u,Du)\cdot D\z\big]\dx\dt
	\le(\ge)0
\end{aligned}
\end{equation*}
for all non-negative test functions
\begin{equation*}
\z\in W_{\loc}^{1,p}\big(0,T;L^p(E)\big)\cap L_{\loc}^p\big(0,T;W_o^{1,p}(E)%
\big).
\end{equation*}
Moreover, setting $\hat{p}:=\min\{2,p\}$,
the initial datum is taken in the sense that for any compact set $K\Subset E$,
\[
\int_{K\times\{t\}}(u-u_o)^{\hat{p}}_{\pm}\,\dx\to0\quad\text{ as }t\downarrow0.
\]
The Dirichlet datum $g$ is attained under $u\le(\ge)g$
on $\pl E$ in the sense that the traces of $(u-g)_{\pm}$
vanish as functions in $W^{1,p}(E)$ for a.e. $t\in(0,T]$, i.e. $(u-g)_{\pm}\in L^p(0,T; W^{1,p}_o(E))$.
Notice that no {\it a priori} information is assumed on the smoothness of $\pl E$.

A function $u$ that is both a weak sub-solution and a weak super-solution
to \eqref{Dirichlet} is a weak solution.
\subsubsection{Notion of Solution to the Neumann Problem}\label{S:1:4:4}
A function
\begin{equation*}  
	u\in C\big(0,T;L^p(E)\big)\cap L^p\big(0,T; W^{1,p}(E)\big)
\end{equation*}
is a weak sub(super)-solution to \eqref{Neumann}, 
if for every compact set $K\subset \rr^N$ and every sub-interval
$[t_1,t_2]\subset (0,T]$,
\begin{equation*}  
\begin{aligned}
	\int_{K\cap E} |u|^{p-2}u\z \,\dx\bigg|_{t_1}^{t_2}
	&+
	\iint_{\{K\cap E\}\times(t_1,t_2)} \big[-|u|^{p-2}u\z_t+\bl{A}(x,t,u,Du)\cdot D\z\big]\dx\dt\\
	&\le(\ge)\iint_{\{K\cap\pl E\}\times(t_1,t_2)}\psi(x,t,u)\z\,\d\sig\dt
\end{aligned}
\end{equation*}
for all non-negative test functions
\begin{equation*}
\z\in W_{\loc}^{1,p}\big(0,T;L^p(K)\big)\cap L_{\loc}^p\big(0,T;W_o^{1,p}(K)%
\big).
\end{equation*}
Here $\d\sig$ denotes the surface measure on $\pl E$.
The Neumann datum $\psi$ is reflected in the boundary integral on the right-hand side.
Moreover, the initial datum is taken 
as in the Dirichlet problem.

A function $u$ that is both a weak sub-solution and a weak super-solution
to \eqref{Neumann} is a weak solution.


\medskip
{\it Acknowledgement.} 
V.~B\"ogelein and N.~Liao have been supported by the FWF-Project P31956-N32
``Doubly nonlinear evolution equations".

\section{Some Technical Tools}
For $k, w\in\rr$ we define two quantities
\begin{equation*}
	\mathfrak g_\pm (w,k)=\pm (p-1)\int_{k}^{w}|s|^{p-2}(s-k)_\pm\,\ds.
\end{equation*}
Note that $\mathfrak g_\pm (w,k)\ge 0$.
For $b\in\rr$ and $\al>0$, we will embolden $\boldsymbol{b}^\al$ to denote the
signed $\al$-power of $b$ as 
\begin{align}\label{Eq:power}
\boldsymbol{b}^\al=
\left\{
\begin{array}{cl}
|b|^{\al-1}b, &b\neq0,\\[5pt]
0, &b=0.
\end{array}\right.
\end{align}

The following lemma can be found in the literature; cf. \cite[Lemma~2.2]{Acerbi-Fusco} for $\alpha\in(0,1)$ and \cite[inequality~(2.4)]{Giaquinta-Modica} for $\alpha>1$.

\begin{lemma}\label{lem:Acerbi-Fusco}
For any $\alpha>0$, there exists a constant $\boldsymbol\gamma=\boldsymbol\gamma(\alpha)$ such that,
for all $a,b\in\rr$, the following inequality holds true:
\begin{align*}
	\tfrac1{\boldsymbol\gamma}\big|\power{b}{\alpha} - \power{a}{\alpha}\big|
	\le
	\big(|a| + |b|\big)^{\alpha-1}|b-a|
	\le
	\boldsymbol\gamma \big|\power{b}{\alpha} - \power{a}{\alpha}\big|.
\end{align*}
\end{lemma}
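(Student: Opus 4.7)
My plan is to reduce the two-sided inequality to the classical non-negative case which the authors have already flagged as available in the literature, by means of a clean sign split on $a$ and $b$.

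First I would record that the signed power $c\mapsto \power{c}{\alpha}$ is \emph{odd}: for $c\neq 0$, $\power{(-c)}{\alpha} = |c|^{\alpha-1}(-c) = -\power{c}{\alpha}$. Consequently both sides of the claimed inequality are invariant under the simultaneous sign flip $(a,b)\mapsto(-a,-b)$. This allows me to assume without loss of generality that $b\ge 0$. The case $a=b=0$ is trivial, so I may assume $b>0$.

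I would then split into two cases. In the \textbf{same-sign case} $a,b\ge 0$, the definition gives $\power{b}{\alpha}-\power{a}{\alpha}=b^{\alpha}-a^{\alpha}$ and $|a|+|b|=a+b$, so the target reduces to
\[
\tfrac{1}{\boldsymbol\gamma}\bigl|b^{\alpha}-a^{\alpha}\bigr|\le (a+b)^{\alpha-1}|b-a|\le \boldsymbol\gamma\bigl|b^{\alpha}-a^{\alpha}\bigr|,
\]
which is exactly the classical comparison for non-negative reals: Acerbi--Fusco \cite[Lemma~2.2]{Acerbi-Fusco} covers $\alpha\in(0,1)$, Giaquinta--Modica \cite[(2.4)]{Giaquinta-Modica} covers $\alpha>1$, and $\alpha=1$ is trivial. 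In the \textbf{opposite-sign case} $a<0<b$, set $a':=-a>0$. Then $\power{b}{\alpha}-\power{a}{\alpha}=b^{\alpha}+(a')^{\alpha}$, $|b-a|=b+a'$, and $|a|+|b|=a'+b$, so the claim becomes
\[
\tfrac{1}{\boldsymbol\gamma}\bigl(b^{\alpha}+(a')^{\alpha}\bigr)\le (a'+b)^{\alpha}\le \boldsymbol\gamma\bigl(b^{\alpha}+(a')^{\alpha}\bigr),
\]
which is the elementary equivalence $x^{\alpha}+y^{\alpha}\asymp(x+y)^{\alpha}$ for $x,y\ge 0$, with constants depending only on $\alpha$ (check at $x=y=1$ and use homogeneity). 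Taking the maximum of the constants produced in the two cases yields a single $\boldsymbol\gamma=\boldsymbol\gamma(\alpha)$ that works universally.

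There is no genuine obstacle here: the proof is a reduction to a known inequality, and the only real work is the bookkeeping to verify that the signed-power differences behave correctly when $a$ and $b$ straddle the origin. The opposite-sign case, which is precisely where the signed convention \eqref{Eq:power} matters, is handled by the trivial $L^\alpha$-type equivalence, while the same-sign case is supplied verbatim by the cited references.
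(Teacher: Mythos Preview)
Your argument is correct. The paper itself gives no proof at all: it simply asserts that the inequality ``can be found in the literature'' and cites \cite[Lemma~2.2]{Acerbi-Fusco} and \cite[(2.4)]{Giaquinta-Modica}. Those results are stated for vectors $\xi,\eta\in\mathbb{R}^N$, so specializing to $N=1$ already yields the signed-real statement directly, and the authors evidently regard the lemma as an immediate quotation rather than something requiring the sign reduction you carry out. Your write-up is thus a careful unpacking of what the citation delivers, useful for a reader who only has the scalar non-negative version at hand; the opposite-sign case you isolate is exactly where the one-dimensional specialization differs in appearance from the unsigned inequality, and your observation that it collapses to $x^\alpha+y^\alpha\asymp (x+y)^\alpha$ is the clean way to see it.

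One small wrinkle: after reducing to $b\ge 0$ via the sign flip, the step ``the case $a=b=0$ is trivial, so I may assume $b>0$'' is a non sequitur, since $b=0$ with $a\ne 0$ is not excluded. This is harmless: if $b=0$ and $a>0$ you are already in the same-sign case, while if $b=0$ and $a<0$ either a second sign flip puts you in the same-sign case or, equivalently, your opposite-sign computation with $a'=-a$ goes through verbatim with $b=0$ (both sides equal $(a')^\alpha$). You may simply drop the sentence and split on the sign of $a$.
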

\noi Based on Lemma~\ref{lem:Acerbi-Fusco}, we prove the following.
\begin{lemma}\label{lem:g}
There exists a constant $\boldsymbol\gamma=\boldsymbol\gamma(p)$ such that,
for all $w,k\in\rr$, the following inequality holds true:
\begin{align*}
	\tfrac1{\boldsymbol\gamma} \big(|w| + |k|\big)^{p-2}(w-k)_\pm^2
	\le
	\mathfrak g_\pm (w,k)
	\le
	\boldsymbol\gamma \big(|w| + |k|\big)^{p-2}(w-k)_\pm^2
\end{align*}
\end{lemma}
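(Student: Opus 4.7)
The plan is to reduce to the $\mathfrak g_+$ case, rewrite the defining integral via Fubini as
\[
\mathfrak g_+(w,k) \,=\, \int_k^w\!\bigl(\boldsymbol{w}^{p-1} - \boldsymbol{s}^{p-1}\bigr)\,\d s,\qquad w>k,
\]
(which uses $\frac{\d}{\d s}\boldsymbol{s}^{p-1} = (p-1)|s|^{p-2}$), and then pinch the right-hand side from above and below by Lemma~\ref{lem:Acerbi-Fusco} with $\alpha = p-1$. The reduction to the $+$ case goes through the symmetry $\mathfrak g_-(w,k) = \mathfrak g_+(-w,-k)$, obtained by the substitution $s\mapsto -s$ and the oddness of $\boldsymbol{\cdot}^{p-1}$; the claimed bounds are invariant under this symmetry since $|w|+|k|$ and $(w-k)_\pm^2$ transform correctly. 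When $w\le k$ the identity gives $\mathfrak g_+=0$ and the assertion is trivial, so I henceforth assume $w>k$ and set $\Delta:=w-k>0$, $M:=|w|+|k|$.

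For the upper bound, the integrand $\boldsymbol{w}^{p-1}-\boldsymbol{s}^{p-1}$ is nonincreasing in $s\in[k,w]$, hence bounded above by its value at $s=k$, and Lemma~\ref{lem:Acerbi-Fusco} produces
\[
\mathfrak g_+(w,k) \,\le\, \Delta\bigl(\boldsymbol{w}^{p-1}-\boldsymbol{k}^{p-1}\bigr) \,\le\, \boldsymbol\gamma\, M^{p-2}\,\Delta^2.
\]
For the lower bound, I would restrict the integration to the left half $[k, k+\Delta/2]$, where $w-s\ge \Delta/2$, and apply Lemma~\ref{lem:Acerbi-Fusco} in the reverse direction to obtain
\[
\mathfrak g_+(w,k) \,\ge\, \tfrac{\Delta}{2\boldsymbol\gamma}\int_k^{k+\Delta/2}\bigl(|w|+|s|\bigr)^{p-2}\,\d s.
\]

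The remaining step is the pointwise comparison $|w|+|s|\asymp M$ on $[k,k+\Delta/2]$, with constants depending only on $p$. The upper inequality $|w|+|s|\le 2M$ is automatic from $|s|\le\max(|k|,|w|)\le M$. The lower inequality $|w|+|s|\ge M/2$ is the only mildly delicate point and calls for a short case split on the signs of $k$ and $w$: if $k,w$ share a sign, $|s|$ is trapped between $\min(|k|,|w|)$ and $\max(|k|,|w|)$ and the bound is immediate; if $k<0<w$ one has $M=\Delta$, and either the midpoint $(k+w)/2\le 0$, so $|s|\ge(|k|-|w|)/2$ and hence $|w|+|s|\ge M/2$, or $(k+w)/2>0$, so $|w|>M/2$ already. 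With this comparison in hand, distinguishing the two cases $p\ge 2$ and $1<p<2$ to account for the monotonicity direction of $t\mapsto t^{p-2}$ gives $(|w|+|s|)^{p-2}\ge c_p M^{p-2}$ throughout $[k,k+\Delta/2]$, whence $\mathfrak g_+(w,k)\ge \boldsymbol\gamma^{-1} M^{p-2}\Delta^2$. The only step I expect to require explicit bookkeeping is the elementary sign case analysis just outlined; everything else collapses into one-line applications of Lemma~\ref{lem:Acerbi-Fusco}.
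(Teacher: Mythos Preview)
Your proof is correct and follows the same skeleton as the paper's: bound the linear factor by $\Delta$ throughout for the upper bound, and for the lower bound restrict to the half of the interval where it is at least $\Delta/2$, invoking Lemma~\ref{lem:Acerbi-Fusco} in both cases. The only real difference is in the lower-bound bookkeeping. You apply Lemma~\ref{lem:Acerbi-Fusco} pointwise inside the integral (after your Fubini rewrite) and are then left with the pointwise comparison $|w|+|s|\asymp |w|+|k|$ on the half-interval, which costs you the sign case-split. The paper instead integrates $(p-1)|s|^{p-2}$ exactly over the half-interval --- its antiderivative is $|s|^{p-2}s$ --- and only then applies Lemma~\ref{lem:Acerbi-Fusco} once to the difference of the two endpoint values; the residual comparison $|w|+\tfrac12|k+w|\asymp|w|+|k|$ follows from the triangle inequality in one line, with no cases needed. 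Your Fubini identity is a pleasant way to see the structure but is not essential; either execution gets there.
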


\begin{proof}
We only consider $\mathfrak g_-$, since the estimate for $\mathfrak g_+$ is analogous. If $k\le w$, then $\mathfrak g_-(w,k)=0=(w-k)_-$. Therefore it is enough to consider $w,k\in\rr$ with $w<k$. Here, we have 
\begin{align*}
	\mathfrak g_- (w,k)
	&=
	(p-1)\int_{w}^k|s|^{p-2}(k-s)\,\ds \\
	&\ge
	(p-1)\int_{w}^{\frac12(k+w)}|s|^{p-2}(k-s)\,\ds \\
	&\ge
	\tfrac{p-1}2 (k-w) \int_{w}^{\frac12(k+w)}|s|^{p-2}\,\ds .
\end{align*}
Note that $p-2>-1$ and therefore the integral on the right-hand side exists. With Lemma~\ref{lem:Acerbi-Fusco} we thus obtain
\begin{align*}
	\mathfrak g_- (w,k)
	&\ge
	\tfrac{1}2 (k-w) |s|^{p-2}s \Big|_{w}^{\frac12(k+w)}\\
	&\ge
	\tfrac{1}{\boldsymbol\gamma (p)} (k-w) 
	\big(|w|+\tfrac12|k+w|\big)^{p-2} \big(\tfrac12(k+w)-w\big) \\
	&=
	\tfrac{1}{2\boldsymbol\gamma (p)}(k-w)^2 
	\big(|w|+\tfrac12|k+w|\big)^{p-2} \\
	&\ge 
	\tfrac{1}{\boldsymbol\gamma (p)}(k-w)^2 
	\big(|w|+|k|\big)^{p-2} .
\end{align*}
In the last line we have used $\tfrac12(|w|+|k|)\le |w|+\tfrac12|k+w|\le 2(|w|+|k|)$. 
This proves the lower bound for $\mathfrak g_- (w,k)$. For the upper bound we again apply Lemma~\ref{lem:Acerbi-Fusco} and obtain
\begin{align*}
	\mathfrak g_- (w,k)
	&=
	(p-1)\int_{w}^k|s|^{p-2}(k-s)\,\ds \\
	&\le
	(p-1)(k-w) \int_{w}^{k}|s|^{p-2}\,\ds \\
	&=
	(k-w) |s|^{p-2}s \Big|_{w}^{k}\\
	&\le
	\boldsymbol\gamma (p)(k-w)^2 
	\big(|w|+|k|\big)^{p-2} .
\end{align*}
This finishes the proof of the lemma.
\end{proof}
The time derivative of a weak solution exists in the sense of distribution only.
However we often need to use $u$ in the test function and thus
the term $u_t$ appears in the integral weak formulation of solution, 
which is not granted by the preset notion of solution. 
In order to overcome the lack of regularity in the time variable,
we define the following mollification in time:
\begin{equation}\label{def:mol}
	\llbracket v \rrbracket_h(x,t)
	\df{=} 
	\tfrac 1h \int_0^t \mathrm e^{\frac{s-t}h} v(x,s) \, \ds\quad\text{ for any }v\in L^1(E_T).
\end{equation}
Properties of this mollification can for instance be found in \cite{KL}.

\section{Energy Estimates}

In this section we exploit the property of weak sub(super)-solutions 
in order to deduce certain energy estimates.
We emphasize the different roles played by sub-solutions and super-solutions.
When we state {\it ``$u$ is a sub(super)-solution...''}
and use $``\pm"$ or $``\mp"$ in what follows, we mean the sub-solution corresponds to
the upper sign and the super-solution corresponds to the lower sign in the statement.

First of all, we present energy estimates for local weak sub(super)-solutions defined in Section~\ref{S:1:2:1}.

\begin{proposition}\label{Prop:2:1}
	Let $u$ be a  local weak sub(super)-solution to \eqref{Eq:1:1} -- \eqref{Eq:1:2p} in $E_T$.
	There exists a constant $\boldsymbol \gm (C_o,C_1,p)>0$, such that
 	for all cylinders $Q_{R,S}=K_R(x_o)\times (t_o-S,t_o)\Subset E_T$,
 	every $k\in\rr$, and every non-negative, piecewise smooth cutoff function
 	$\z$ vanishing on $\pl K_{R}(x_o)\times (t_o-S,t_o)$,  there holds
\begin{align*}
	\essup_{t_o-S<t<t_o}&\int_{K_R(x_o)\times\{t\}}	
	\z^p\mathfrak g_\pm (u,k)\,\dx
	+
	\iint_{Q_{R,S}}\z^p|D(u-k)_\pm|^p\,\dx\dt\\
	&\le
	\boldsymbol \gm\iint_{Q_{R,S}}
		\Big[
		(u-k)^{p}_\pm|D\z|^p + \mathfrak g_\pm (u,k)|\partial_t\z^p|
		\Big]
		\,\dx\dt\\
	&\phantom{\le\,}
	+\int_{K_R(x_o)\times \{t_o-S\}} \z^p \mathfrak g_\pm (u,k)\,\dx.
\end{align*}
\end{proposition}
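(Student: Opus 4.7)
\medskip

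\textbf{Proof plan.} The strategy is the standard Caccioppoli argument adapted to the doubly nonlinear setting, with the time derivative made rigorous through the mollification $\llbracket\cdot\rrbracket_h$ introduced in \eqref{def:mol}. I will only write the sub-solution case (upper sign), the super-solution case being entirely analogous after replacing $u$ by $-u$ (compatible with the parabolicity property from Section~\ref{S:1:4:2}).

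First, I would mollify the weak formulation \eqref{Eq:1:4p} in time to replace $|u|^{p-2}u$ by $\llbracket|u|^{p-2}u\rrbracket_h$, which is absolutely continuous in $t$ with values in $L^p_{\loc}(E)$. This allows me to use test functions containing $u$ itself. Concretely, I would insert the test function
\[
\varphi_h(x,t)=\zeta^p(x,t)\,\big(\llbracket u\rrbracket_h-k\big)_+\cdot\chi_{[t_o-S,\tau]}(t),
\]
for a fixed $\tau\in(t_o-S,t_o]$, after a routine temporal approximation of the indicator in $t$. Alternatively, one may work first on a sub-interval $(t_1,t_2)$, then take a suitable limit. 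The cut-off $\zeta$ vanishes on the lateral boundary of $Q_{R,S}$, so $\varphi_h$ is an admissible test function for every $h>0$.

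The core computation is to identify the time contribution. Using the chain rule in the smooth variable $\llbracket|u|^{p-2}u\rrbracket_h$ and passing $h\downarrow 0$, the identity
\[
\partial_t\big(|u|^{p-2}u\big)\cdot(u-k)_+=\partial_t\mathfrak g_+(u,k),
\qquad\text{i.e.}\qquad \tfrac{d}{du}\mathfrak g_+(u,k)=(p-1)|u|^{p-2}(u-k)_+,
\]
produces, after integration by parts in $t$ against $\zeta^p$,
\[
\int_{K_R(x_o)\times\{\tau\}}\!\!\zeta^p\mathfrak g_+(u,k)\,\dx
-\int_{K_R(x_o)\times\{t_o-S\}}\!\!\zeta^p\mathfrak g_+(u,k)\,\dx
-\iint_{K_R(x_o)\times(t_o-S,\tau)}\!\!\mathfrak g_+(u,k)\,\partial_t\zeta^p\,\dx\dt.
\]
The limit $h\downarrow 0$ is justified by the $L^p$--continuity of the mollification together with the uniform bounds from \eqref{Eq:1:3p}. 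This is where the standard but slightly delicate part of the argument lies, and I would refer to the treatment in \cite{KL} for the passage to the limit; the main obstacle of the proof is precisely to carry this mollification argument cleanly, since pointwise-in-time values of $\mathfrak g_+(u,k)$ must be interpreted with care.

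For the spatial term, expanding $D\varphi=p\zeta^{p-1}(u-k)_+D\zeta+\zeta^p D(u-k)_+$ and using the structure conditions \eqref{Eq:1:2p},
\[
\bl A\cdot D\varphi\ge C_o\zeta^p|D(u-k)_+|^p-pC_1\zeta^{p-1}(u-k)_+|D\zeta|\,|D(u-k)_+|^{p-1}\chi_{\{u>k\}}.
\]
Young's inequality with exponents $(p,p/(p-1))$ absorbs the cross term into $\tfrac{C_o}{2}\zeta^p|D(u-k)_+|^p$ at the cost of $\boldsymbol\gm(C_o,C_1,p)(u-k)_+^p|D\zeta|^p$. Collecting terms, taking the essential supremum over $\tau\in(t_o-S,t_o)$ of the zeroth-order piece, and combining with the gradient bound on $Q_{R,S}$, yields the asserted estimate. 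Throughout, Lemma~\ref{lem:g} is not needed here but will be useful downstream; here one only needs the sign $\mathfrak g_+(u,k)\ge 0$ and the elementary identity for its derivative.
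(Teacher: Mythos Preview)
Your overall Caccioppoli strategy and the treatment of the diffusion term match the paper and are fine. The gap is in the time term.

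You mollify the equation so that the time derivative becomes $\partial_t\llbracket|u|^{p-2}u\rrbracket_h$, and then you test with $(\llbracket u\rrbracket_h-k)_+$. But $\llbracket|u|^{p-2}u\rrbracket_h\neq|\llbracket u\rrbracket_h|^{p-2}\llbracket u\rrbracket_h$, so the chain rule identity $\partial_t(|v|^{p-2}v)\,(v-k)_+=\partial_t\mathfrak g_+(v,k)$ does not apply at the regularized level: the function inside the time derivative and the function inside the truncation are \emph{different} before $h\downarrow 0$. Invoking the identity only ``after passing $h\downarrow 0$'' is exactly what the mollification was supposed to justify; as written, the integration by parts in $t$ is not rigorous.

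The paper resolves this by introducing $w_h$ via $\power{w_h}{p-1}:=\llbracket\power{u}{p-1}\rrbracket_h$ (so that the chain rule \emph{does} hold for $w_h$), testing with the \emph{unmollified} $\zeta^p\psi_\varepsilon(u-k)_+$, and splitting
\[
(u-k)_+=(w_h-k)_++\big[(u-k)_+-(w_h-k)_+\big].
\]
The first piece gives $\partial_t\mathfrak g_+(w_h,k)$ exactly. For the second piece one uses the identity $\partial_t\power{w_h}{p-1}=\tfrac1h\big(\power{u}{p-1}-\power{w_h}{p-1}\big)$ together with the monotonicity of $\tau\mapsto(\power{\tau}{1/(p-1)}-k)_+$ to see that the product is nonnegative and may be discarded for sub-solutions. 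This sign argument is the missing ingredient in your outline; once you supply it, the rest of your plan (the $\psi_\varepsilon$ cut-off in time, Young's inequality on the cross term, and taking the sup over $t$) goes through as in the paper.
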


\begin{proof}We only consider the case of a local weak sub-solution. 
Recall the definition \eqref{Eq:power} for the $\al$-power of a possibly negative number and define $ w_h$ via $\power{w_h}{p{-}1}:=\llbracket \power{u}{p{-}1}\rrbracket_h$, where $\llbracket \cdot\rrbracket_h$ denotes the time mollification from \eqref{def:mol}.
From the weak form of the differential inequality for sub-solutions we deduce the mollified version (cf. \cite{KL})
\begin{align}\label{mol-eq} 
	\iint_{E_T} 
	\Big[\partial_t \power{w_h}{p{-}1} \varphi &+ 
	\llbracket\mathbf A(x,t,u,Du)\rrbracket_h\cdot D\varphi \Big]\dx\dt 
	\nonumber\\
	&\le
	\int_E \power{u}{p{-}1}(x,0)\cdot \tfrac1h\int_0^T \mathrm e^{-\frac sh}\varphi(x,s)\,\ds\,\dx,
\end{align}
for any non-negative test function $\varphi\in L^p(0,T;W^{1,p}_0(E))$.  
Let $Q_{R,S}=K_R (x_o)\times (t_o-S,t_o]\Subset E_T$
as in the statement of the energy estimate.  Let $\zeta\in C^1(Q_{R,S},[0,1])$ 
be a cutoff function vanishing on $\pl K_{R}(x_o)\times (t_o-S,t_o)$.
Furthermore, for  fixed $t_o-S<t_1<t_2 <t_o$ and $\varepsilon >0$ small enough we define the cutoff  function in time $\psi_\varepsilon \in W^{1,\infty} \big((t_o-S,t_o),[0,1]\big)$ by 

\begin{align*}
\psi_\varepsilon(t) :=
\left\{
\begin{array}{cl}
	0, & \text{ for } t_o - S \le t \leq t_1-\varepsilon, \vspace{5pt}\\
	1+ \frac{t-t_1}{\epsilon},  & \text{ for } t_1-\varepsilon < t \leq t_1, \vspace{5pt}\\
	1,	& \text{ for } t_1 < t \leq t_2,\vspace{5pt}\\
	1-\frac{t-t_2}{\epsilon}, & \text{ for } t_2 < t \leq t_2 +\epsilon, \vspace{5pt}\\
	0, & \text{ for } t_2+\epsilon < t \le t_o.
\end{array}
\right.
\end{align*} 
Now, we choose in \eqref{mol-eq}  the testing function
\begin{equation}\label{Test}
        Q_{R,S}\ni(x,t)\mapsto
	\varphi(x,t) 
	= 
	\zeta^p(x,t)\psi_\varepsilon(t) \big(u(x,t)-k\big)_+. 
\end{equation}
In the following we omit in the notation the reference to the center $z_o=(x_o,t_o)$. 
Moreover, we observe that 
\begin{equation*}
	\mathfrak g_+ (w,k)= (p-1)\int_{k}^{w}|s|^{p-2}(s-k)_+\,\ds
	=\int_{{\bm k}^{p-1}}^{{\bm w}^{p-1}}\big(\power{s}{\frac{1}{p-1}}-k\big)_+\,\ds,
\end{equation*}
which can be seen by substituting $\sigma:=\power{s}{p-1}$. Note that the mapping
$\rr\ni s\mapsto \phi(s)=\power{s}{p-1}$ is increasing with derivative $\phi'(s)=(p-1)|s|^{p-2}$
($s\not=0$ in the case $p<2$). For the integral in \eqref{mol-eq} containing the time derivative we compute 
\begin{align*}
	\iint_{E_T} 
	\partial_t \power{w_h}{p{-}1} \varphi \,\dx\dt 
	&= 
	\iint_{Q_{R,S}}
	\zeta^p  \psi_\varepsilon \partial_t \power{w_h}{p{-}1} (w_h-k)_+ 
	\dx\dt \\
	&\phantom{=\,} +
	\iint_{Q_{R,S}}
	\zeta^p  \psi_\varepsilon \partial_t \power{w_h}{p{-}1} \big( (u-k)_+-(w_h-k)_+\big) 
	\dx\dt \\
	&\ge 
	\iint_{Q_{R,S}}
	\zeta^p  \psi_\varepsilon 
	\partial_t\mathfrak g_+ (w_h,k) \dx\dt \\
	& = 
	- \iint_{Q_{R,S}} 
	\big(\zeta^p  \psi_\varepsilon'+ \psi_\varepsilon\partial_t\zeta^p\big) \mathfrak g_+ (w_h,k) 
	\dx\dt.
\end{align*}
Here we used in the second line the identity
\begin{align}\label{dt-wh}
	 \partial_t \power{w_h}{p{-}1}  
	 &=
	 \tfrac{1}{h} \big(\power{u}{p{-}1}- \power{w_h}{p-1}\big) ,
\end{align}
and the fact that the map $\tau\mapsto (\power{\tau}{\frac1{p-1}}-k)_+$ is a monotone increasing function, implying that the term in the second last line of the above inequality is non-negative. Since $\power{w_h}{p{-}1}=\llbracket \power{u}{p{-}1}\rrbracket_h\to \power{u}{p{-}1}$ in $L^{\frac{p}{p-1}}(\Omega_T)$ we can pass to the limit $h\downarrow 0$ in the integral on the right-hand side. We therefore get
\begin{align*}
	\liminf_{h\downarrow 0}
	\iint_{Q_{R,S}} 
	\partial_t \power{w_h}{p{-}1}\varphi \,\dx\dt 
	&\ge
	- \iint_{Q_{R,S}}
	\big(\zeta^p \psi_\varepsilon' + \psi_\varepsilon\partial_t\zeta^p\big)
	\mathfrak g_+ (u,k) \,\dx\dt \\
	&=: 
	-\big[\mathbf{I}_{\varepsilon} +\mathbf{II}_{\varepsilon}\big],
\end{align*}
with the obvious meaning of $\mathbf{I}_{\varepsilon}$ and $\mathbf{II}_{\varepsilon}$. We now pass to the limit $\varepsilon \downarrow 0$.
 For the term $\mathbf{I}_{\varepsilon}$ we obtain for any $t_o-S<t_1<t_2<t_o$
 that
$$
 	\lim_{\varepsilon \downarrow 0}\mathbf{I}_\varepsilon 
	=
	\int_{K_R} \zeta^p(x,t_1) \mathfrak g_+(u(x,t_1),k)\,\dx 
	-
	\int_{K_R} \zeta^p(x,t_2) \mathfrak g_+(u(x,t_2),k)\,\dx 
	,
$$
while for $\mathbf{II}_{\varepsilon}$ we have
\begin{align*}
	\lim_{\varepsilon \downarrow 0}\mathbf{II}_{\varepsilon}
	=
	\iint_{K_R\times (t_1,t_2)} \partial_t\zeta^p\mathfrak g_+(u,k)\,\dx\dt.
\end{align*}
Next, we observe that the boundary term in \eqref{mol-eq} disappears as $h\downarrow 0$, since  by construction $\varphi(\cdot ,0)\equiv 0$ on $E$, i.e.~we have
\begin{equation*}	
	\lim_{h\downarrow 0}
	\int_E \power{u}{p{-}1}(x,0)\cdot \tfrac1h\int_0^T \mathrm e^{-\frac sh}\varphi(x,s)\,\ds\,\dx
	=
	\int_E \power{u}{p{-}1}(x,0)\varphi(x,0)\,\dx=0.
\end{equation*}
It remains to consider the diffusion term.
After passing to the limit $h\downarrow 0$, we use the ellipticity and growth assumption \eqref{Eq:1:2p} for the vector-field $\mathbf A$, and subsequently Young's inequality to the integral containing $(u-k)_+$ and $D(u-k)_+$. In this way we obtain one term that we can absorb in the term arising from the ellipticity condition, the other one is shifted later on to the right hand side.
\begin{align*}
	&\lim_{h\downarrow 0}
	\iint_{E_T} 
	\llbracket\mathbf A(x,t,u,Du)\rrbracket_h\cdot D\varphi\, \dx\dt \\
	&\qquad=
	\iint_{Q_{R,S}} 
	 \psi_\varepsilon \mathbf A (x,t,u,Du) \cdot  
	\big[\zeta^p D(u-k)_+ + p\zeta^{p-1} (u-k)_+D\zeta\big] 
	\dx\dt \\
	&\qquad\geq  
	C_o \iint_{Q_{R,S}}
	\zeta^p \psi_\varepsilon|D(u-k)_+|^p \dx\dt\\
	&\qquad\qquad\qquad -
	C_1p\iint_{Q_{R,S}} 
	\zeta^{p-1} \psi_\varepsilon |D\zeta| (u-a)_+ |D(u-k)_+|^{p-1}
	\dx\dt \\
	&\qquad\geq   
	\frac{C_o}{p} \iint_{Q_{R,S}}
	\zeta^p  \psi_\varepsilon |D(u-k)_+|^p \dx\dt -
	\boldsymbol\gamma \iint_{Q_{R,S}}\psi_\varepsilon|D\zeta|^p (u-k)^p_+ \dx\dt.
\end{align*}
Combining the preceding estimates and letting $\varepsilon\downarrow 0$  we arrive at
\begin{align*}
		&\int_{K_R\times\{t_2\}} \zeta^p \mathfrak g_+(u,k)\,\dx
		+
		\frac{C_o}{p} \iint_{K_R\times (t_1,t_2)} \zeta^p |D(u-k)_+|^p \dx\dt\\
		&\qquad\le 
	          \iint_{K_R\times (t_1,t_2)}\Big[ \boldsymbol\gamma |D\zeta|^p(u-k)^p_+ 
	         +
	          \partial_t\zeta^p\mathfrak g_+(u,k)\Big]\,\dx\dt +
	         \int_{K_R\times\{t_1\}} \zeta^p \mathfrak g_+(u,k)\,\dx,
\end{align*}
whenever $t_o-S<t_1<t_2<t_o$. The constant $\boldsymbol\gamma$  in the first integral on the right hand side depends only on $p, C_o$ and $C_1$. At this point, a standard argument finishes the proof. We first pass in the last inequality to the limit $t_1\downarrow t_o-S$.  This poses no problem since $u\in C\big( 0,T;L^p_{\rm loc}(E)\big)$. We obtain 
\begin{align*}
		&\int_{K_R\times\{t_2\}} \zeta^p \mathfrak g_+(u,k)\,\dx
		+
		\frac{C_o}{p} \iint_{K_R\times (t_o-S,t_2)} \zeta^p |D(u-k)_+|^p \dx\dt\\
		&\qquad\le 
	          \iint_{Q_{R,S}}\Big[ \boldsymbol\gamma |D\zeta|^p (u-k)^p_+
	         +
	          \partial_t\zeta^p\mathfrak g_+(u,k)\Big]\,\dx\dt
	           +
	         \int_{K_R\times\{t_o-S\}} \zeta^p \mathfrak g_+(u,k)\,\dx,
\end{align*}
Here, we discard the second integral on the left-hand side and take  then the essential supremum with respect to $t_2\in (t_o-S,t_o)$. This leads to an estimate of the essential supremum of the first integral. On the other hand, discarding the first integral and passing to the limit $t_2\uparrow t_o$ we deduce a similar estimate for the second integral of the left-hand side. 
Together, this gives
 \begin{align*}
		&\essup_{t_o-S<t<t_o}\int_{K_R\times\{t\}} \zeta^p \mathfrak g_+(u,k)\,\dx
		+
		\frac{C_o}{p} \iint_{K_R\times (t_o-S,t_o)} \zeta^p |D(u-k)_+|^p \dx\dt\\
		&\qquad\le 
	          \iint_{K_{R,S}}\Big[ \boldsymbol\gamma|D\zeta|^p (u-k)^p_+ 
	         +
	          \partial_t\zeta^p\mathfrak g_+(u,k)\Big]\,\dx\dt
	          +
	         \int_{K_R\times\{t_o-S\}} \zeta^p \mathfrak g_+(u,k)\,\dx.
\end{align*}
This finishes the proof of the energy estimate.
\end{proof}
Next, we consider the situation near the initial level $t=0$
when a continuous datum $u_o$ is prescribed.
We work in a cylinder $K_R(x_o)\times (0,S)\subset E_T$, which lies
on the bottom of $E_T$. Let $\z$ be a non-negative, piecewise smooth cutoff function
that is independent of $t$ and vanishes on $\pl K_{R}(x_o)$.
The conclusion of Proposition~\ref{Prop:2:1} holds in any cylinder satisfying
$K_R(x_o)\times (t_1,S)\Subset E_T$. 
Suppose the level $k$ satisfies
\begin{equation}\label{Eq:3:2}
	\left\{
	\begin{aligned}
	&k\ge\sup_{K_R(x_o)}u_o\quad\text{ for sub-solutions},\\
	&k\le\inf_{K_R(x_o)}u_o\quad\text{ for super-solutions}.
	\end{aligned}
	\right.
\end{equation}
Then in view of the initial datum $u_o$ being taken in the topology of $L^{\hat{p}}_{\loc}(E)$
and letting $t_1\downarrow0$, it is not hard to verify that the space integral on the right-hand side at the time level $t_1$
will tend to zero. Consequently, we arrive at
\begin{proposition}\label{Prop:2:2}
	Let $u$ be a  local weak sub(super)-solution to \eqref{Dirichlet} with \eqref{Eq:1:2p} in $E_T$.
	There exists a constant $\boldsymbol \gm (C_o,C_1,p)>0$, such that
 	for all cylinders $K_R(x_o)\times (0,S)\subset E_T$,
 	every $k\in\rr$ satisfying \eqref{Eq:3:2}
	and every non-negative, piecewise smooth cutoff function
 	$\z$ independent of $t$ and vanishing on $\pl K_{R}(x_o)$,  there holds
\begin{align*}
	\essup_{0<t<S}&\int_{K_R(x_o)\times\{t\}}	
	\z^p\mathfrak g_\pm (u,k)\,\dx
	+
	\iint_{K_R(x_o)\times (0,S)}\z^p|D(u-k)_\pm|^p\,\dx\dt\\
	&\le
	\boldsymbol \gm\iint_{K_R(x_o)\times (0,S)}
		(u-k)^{p}_\pm|D\z|^p 
		\,\dx\dt
\end{align*}
\end{proposition}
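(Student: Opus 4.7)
The plan is to apply (the conclusion of) Proposition~\ref{Prop:2:1} on the shifted cylinder $K_R(x_o)\times(\delta,S)$ for each $\delta\in(0,S)$, with the given cutoff $\zeta$, and then pass to the limit $\delta\downarrow 0$. I treat the sub-solution case only; the super-solution case is symmetric. Two simplifications are immediate. First, since $\zeta$ is independent of $t$, the term $\mathfrak g_+(u,k)|\pl_t\zeta^p|$ on the right-hand side of the energy estimate vanishes identically. Second, since $\zeta$ vanishes on $\pl K_R(x_o)$ and $K_R(x_o)\subset E$, its zero extension is an admissible test function in the Dirichlet weak formulation, so no compact containment of $K_R(x_o)$ in $E$ is required and the argument of Proposition~\ref{Prop:2:1} carries over verbatim. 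This yields
\begin{align*}
	\essup_{\delta<t<S}&\int_{K_R(x_o)\times\{t\}}\zeta^p\mathfrak g_+(u,k)\,\dx
	+\iint_{K_R(x_o)\times(\delta,S)}\zeta^p|D(u-k)_+|^p\,\dx\dt\\
	&\le\boldsymbol\gm\iint_{K_R(x_o)\times(\delta,S)}(u-k)_+^p|D\zeta|^p\,\dx\dt
	+\int_{K_R(x_o)\times\{\delta\}}\zeta^p\mathfrak g_+(u,k)\,\dx.
\end{align*}

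The decisive step, and the one I expect to be the main obstacle, is showing that the last integral vanishes as $\delta\downarrow 0$. The level choice $k\ge\sup_{K_R}u_o$ gives $(u_o-k)_+\equiv 0$ on $K_R(x_o)$, hence $\mathfrak g_+(u_o,k)\equiv 0$ there and $(u-k)_+\le(u-u_o)_+$ pointwise. Combining the $L^\infty$ bound on $u$ with the local integrability of $|s|^{p-2}$ (valid since $p>1$), a direct estimate yields the crude linear bound
\[
	\mathfrak g_+(u(\cdot,\delta),k)\le C(p,M)\bigl(u(\cdot,\delta)-u_o\bigr)_+\quad\text{on }K_R(x_o),
\]
with $M$ depending on $\|u\|_{\infty,E_T}$ and $|k|$. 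I deliberately prefer this bound over the sharper $(|w|+|k|)^{p-2}(w-k)_+^2$ bound from Lemma~\ref{lem:g}, since in the singular range $p<2$ the factor $(|u|+|k|)^{p-2}$ may degenerate near $u=k=0$ and would require a more delicate argument. The Dirichlet initial condition $\int_{K_R}(u(\cdot,\delta)-u_o)_+^{\hat p}\,\dx\to 0$, upgraded to $L^1$ convergence of $(u-u_o)_+$ via boundedness, then drives the initial slice integral to zero.

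The remaining limit passages are routine: monotone convergence handles the gradient integral and the right-hand side as $\delta\downarrow 0$, while lower semicontinuity (or the continuity $t\mapsto u(\cdot,t)\in L^p_{\loc}(E)$ on $(0,T]$) permits extending the essential supremum on the left to the full interval $(0,S)$. This completes the reduction to Proposition~\ref{Prop:2:1} and delivers the stated energy estimate.
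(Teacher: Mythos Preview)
Your approach is correct and follows essentially the same route as the paper: apply Proposition~\ref{Prop:2:1} on the shifted cylinder $K_R(x_o)\times(t_1,S)$ with the time-independent cutoff (so the $\partial_t\zeta^p$ term drops), then show the slice integral at $t_1$ vanishes as $t_1\downarrow 0$ using the restriction \eqref{Eq:3:2} on $k$ together with the $L^{\hat p}$ attainment of the initial datum. The paper's own proof is the brief paragraph preceding the statement; you have simply written out the limit argument in more detail. One minor remark: your worry about the factor $(|u|+|k|)^{p-2}$ for $p<2$ is unfounded, since $|u|+|k|\ge (u-k)_+$ gives directly $\mathfrak g_+(u,k)\le\boldsymbol\gamma (u-k)_+^p\le\boldsymbol\gamma(u-u_o)_+^{\hat p}$, which matches the initial condition exponent without invoking boundedness; your linear bound with $\|u\|_\infty$ is a legitimate alternative but not a necessary detour.
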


Now we turn our attention to the energy estimates near $S_T$.
We first deal with Dirichlet data.
When we run the calculation in the proof of Proposition~\ref{Prop:2:1}
within $Q_{R,S}=K_R(x_o)\times (t_o-S,t_o)$ for some $(x_o,t_o)\in S_T$, we need to assume
certain restrictions on the level $k$, i.e.,
\begin{equation}\label{Eq:3:3}
	\left\{
	\begin{aligned}
	&k\ge\sup_{Q_{R,S}\cap S_T}g\quad\text{ for sub-solutions},\\
	&k\le\inf_{Q_{R,S}\cap S_T}g\quad\text{ for super-solutions}.
	\end{aligned}
	\right.
\end{equation}
In such a way, the test functions in \eqref{Test}
\[
Q_{R,S}\cap E_T\ni(x,t)\mapsto\varphi(x,t) 
	= 
	\zeta^p(x,t)\psi_\varepsilon(t) \big(u(x,t)-k\big)_\pm
\]
become admissible as the functions $x\mapsto\big(u(x,t)-k\big)_\pm$ vanish
on $Q_{R,S}\cap S_T$ in the sense of traces for a.e. $t\in(t_o-S,t_o)$.
This fact does not require any smoothness of $\pl E$ (cf. \cite[Lemma~2.1]{GLL}). 
As a result, we have
\begin{proposition}\label{Prop:2:3}
	Let $u$ be a  local weak sub(super)-solution to \eqref{Dirichlet} with \eqref{Eq:1:2p} in $E_T$.
	There exists a constant $\boldsymbol \gm (C_o,C_1,p)>0$, such that
 	for all cylinders $Q_{R,S}=K_R(x_o)\times (t_o-S,t_o)$ with the vertex $(x_o,t_o)\in S_T$,
 	every $k\in\rr$ satisfying \eqref{Eq:3:3}, and every non-negative, piecewise smooth cutoff function
 	$\z$ vanishing on $\pl K_{R}(x_o)\times (t_o-S,t_o)$,  there holds
\begin{align*}
	\essup_{t_o-S<t<t_o}&\int_{\{K_R(x_o)\cap E\}\times\{t\}}	
	\z^p\mathfrak g_\pm (u,k)\,\dx
	+
	\iint_{Q_{R,S}\cap E_T}\z^p|D(u-k)_\pm|^p\,\dx\dt\\
	&\le
	\boldsymbol \gm\iint_{Q_{R,S}\cap E_T}
		\Big[
		(u-k)^{p}_\pm|D\z|^p + \mathfrak g_\pm (u,k)|\partial_t\z^p|
		\Big]
		\,\dx\dt\\
	&\phantom{\le\ }
	+\int_{\{K_R(x_o)\cap E\}\times \{t_o-S\}} \z^p \mathfrak g_\pm (u,k)\,\dx.
\end{align*}
\end{proposition}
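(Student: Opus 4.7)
The plan is to follow the proof of Proposition~\ref{Prop:2:1} line by line, with all spatial integrals restricted to $K_R(x_o)\cap E$ in place of $K_R(x_o)$. The only genuinely new ingredient is verifying that the analogue of the test function \eqref{Test} is admissible in the Dirichlet weak formulation of Section~\ref{S:1:4:3}, and this is exactly where the restriction \eqref{Eq:3:3} on the truncation level $k$ enters.

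First I would start from the mollified inequality corresponding to the Dirichlet notion of sub(super)-solution, obtained by the same procedure that produced \eqref{mol-eq}. Into it I would insert
\[
\varphi(x,t)=\zeta^p(x,t)\psi_\varepsilon(t)\bigl(u(x,t)-k\bigr)_\pm,
\]
with $\zeta$ and $\psi_\varepsilon$ chosen exactly as in the proof of Proposition~\ref{Prop:2:1} and $\varphi$ extended by zero outside $E$.

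The main obstacle, and really the only new point, is the admissibility of $\varphi(\cdot,t)$, i.e.~showing it lies in $W_o^{1,p}(K_R(x_o)\cap E)$ for a.e.~$t\in(t_o-S,t_o)$. Vanishing on the portion of $\pl K_R(x_o)$ that sits inside $E$ is immediate from $\zeta$. On $K_R(x_o)\cap\pl E$ the vanishing has to come from $(u-k)_\pm$. For a sub-solution, $(u-g)_+\in L^p\bigl(0,T; W_o^{1,p}(E)\bigr)$ means that the trace of $u-g$ is non-positive on $\pl E$ for a.e.~$t$; combining with $k\ge\sup_{Q_{R,S}\cap S_T}g$ yields that the trace of $u-k$ is non-positive on $K_R(x_o)\cap\pl E$, so the trace of $(u-k)_+$ vanishes there. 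The super-solution case is symmetric, and no smoothness of $\pl E$ is needed, in line with \cite[Lemma~2.1]{GLL}.

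With admissibility in hand, the remainder is a verbatim repetition of the calculation in Proposition~\ref{Prop:2:1}: the time-derivative term is reshaped via \eqref{dt-wh} and the monotonicity of $\tau\mapsto(\power{\tau}{\frac1{p-1}}-k)_\pm$ into $\partial_t\mathfrak g_\pm(w_h,k)$, after which the derivative is shifted onto $\zeta^p\psi_\varepsilon$; passing $h\downarrow 0$, then $\varepsilon\downarrow 0$, then $t_1\downarrow t_o-S$, and finally taking the essential supremum over $t_2\in(t_o-S,t_o)$ assembles the two left-hand side quantities, while the diffusion term is controlled by \eqref{Eq:1:2p} and Young's inequality as before. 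The resulting constant depends only on $p$, $C_o$, $C_1$.
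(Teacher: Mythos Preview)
Your proposal is correct and matches the paper's own argument essentially verbatim: the paper also reduces Proposition~\ref{Prop:2:3} to repeating the calculation of Proposition~\ref{Prop:2:1} on $Q_{R,S}\cap E_T$, with the sole new point being that the restriction \eqref{Eq:3:3} makes $(u-k)_\pm$ vanish on $Q_{R,S}\cap S_T$ in the sense of traces (citing \cite[Lemma~2.1]{GLL}), so the test function is admissible. Your explanation of the trace argument is in fact slightly more detailed than the paper's.
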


Finally, we deal with the energy estimates for the Neumann problem \eqref{Neumann}.
Like before, we consider the problem in $Q_{R,S}=K_R(x_o)\times(t_o-S,t_o)$ with $(x_o,t_o)\in S_T$
and may assume $(x_o,t_o)=(0,0)$. 
  For a cutoff function $\z$ as in Proposition~\ref{Prop:2:3},
  a similar procedure as in the proof of Proposition~\ref{Prop:2:1}
  will give us that
   \begin{align*}
		\essup_{t_o-S<t<t_o}&\int_{\{K_R\cap E\}\times\{t\}} \zeta^p \mathfrak g_+(u,k)\,\dx
		+
		 \iint_{Q_{R,S}\cap E_T} \zeta^p |D(u-k)_+|^p \dx\dt\\
		&\le 
	         \boldsymbol\gamma \iint_{Q_{R,S}\cap E_T}\Big[ (u-k)^p_+|D\zeta|^p 
	         +
	          \partial_t\zeta^p\mathfrak g_+(u,k)\Big]\,\dx\dt
	          \\
	         &\phantom{\le\,}
	         +\boldsymbol\gamma\iint_{Q_{R,S}\cap S_T}\z^p\psi(x,t,u)(u-k)_+\,\d \sig\dt\\
	        &\phantom{\le\,}
	         +
	         \int_{\{K_R\cap E\}\times\{t_o-S\}} \zeta^p \mathfrak g_+(u,k)\,\dx.
\end{align*}
Now we make use of \eqref{N-data}, apply the {\it trace inequality} (cf. \cite[Proposition~18.1]{DB-RA})
for each time slice and then integrate in time, and use Young's inequality
to estimate the boundary integral:
\begin{align*}
&\iint_{Q_{R,S}\cap S_T}\psi(x,t,u)(u-k)_+\z^p\,\d\sig\dt\\
&\quad\le C_2\iint_{\pl (K_R\cap E)\times (t_o-S,t_o)}(u-k)_+\z^p\,\d \sig\dt\\
&\quad\le \boldsymbol\gamma C_2\iint_{Q_{R,S}\cap E_T}
\Big[|D(u-k)_+|\z^p+(u-k)_+\big(\z^p+|D\z^p|\big)\Big]\,\dx\dt\\
&\quad\le\frac{C_o}{2p}\iint_{Q_{R,S}\cap E_T} \zeta^p |D(u-k)_+|^p \dx\dt
  +\boldsymbol\gamma \iint_{Q_{R,S}\cap E_T} (u-k)^p_+|D\zeta|^p\,\dx\dt\\
  &\qquad+\boldsymbol\gamma  C_2^{\frac{p}{p-1}}\iint_{Q_{R,S}\cap E_T} \z^p\chi_{\{u>k\}}\,\dx\dt.
\end{align*}
Hence, collecting the above estimates we arrive at
\begin{proposition}\label{Prop:2:4}
	Let $u$ be a  local weak sub(super)-solution to \eqref{Neumann} with \eqref{Eq:1:2p} in $E_T$.
	Assume $\pl E$ is of class $C^1$ and \eqref{N-data} holds.
	There exists a constant $\boldsymbol \gm (C_o,C_1,p)>0$, such that
 	for all cylinders $Q_{R,S}=K_R(x_o)\times (t_o-S,t_o)$ with the vertex $(x_o,t_o)\in S_T$,
 	every $k\in\rr$, and every non-negative, piecewise smooth cutoff function
 	$\z$ vanishing on $\pl K_{R}(x_o)\times (t_o-S,t_o)$,  there holds
 \begin{align*}
		\essup_{t_o-S<t<t_o}&\int_{\{K_R(x_o)\cap E\}\times\{t\}} \zeta^p \mathfrak g_\pm(u,k)\,\dx
		+
		 \iint_{Q_{R,S}\cap E_T} \zeta^p |D(u-k)_\pm|^p \dx\dt\\
		&\le 
	         \boldsymbol\gamma \iint_{Q_{R,S}\cap E_T}\Big[ (u-k)^p_\pm|D\zeta|^p 
	         +
	         \mathfrak g_\pm(u,k)| \partial_t\zeta^p|\Big]\,\dx\dt
	          \\
	         &\phantom{\le\,}
	     +\boldsymbol\gamma C_2^{\frac{p}{p-1}}\iint_{Q_{R,S}\cap E_T} \z^p\chi_{\{(u-k)_{\pm}>0\}}\,\dx\dt\\
	        &\phantom{\le\,}
	         +
	         \int_{\{K_R(x_o)\cap E\}\times\{t_o-S\}} \zeta^p \mathfrak g_\pm(u,k)\,\dx.
\end{align*}
\end{proposition}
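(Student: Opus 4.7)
The plan is to adapt the proof of Proposition~\ref{Prop:2:1} to the Neumann setting, the only genuinely new ingredient being the boundary integral produced by the Neumann datum $\psi$ on $S_T$. I will treat the sub-solution case (upper sign); the super-solution case is completely symmetric.

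First I would write down the mollified weak formulation of \eqref{Neumann} in analogy with \eqref{mol-eq}, now including the surface integral $\iint_{S_T}\psi(x,t,u)\varphi\,\d\sigma\dt$ on the right. Next, using that the cutoff $\zeta$ vanishes only on $\pl K_R(x_o)\times(t_o-S,t_o)$ and not on $\pl E$, I would test with
\[
	\varphi(x,t) = \zeta^p(x,t)\psi_\varepsilon(t)\big(u(x,t)-k\big)_+,
\]
where $\psi_\varepsilon$ is the temporal cutoff from the proof of Proposition~\ref{Prop:2:1}. This test function is admissible on $Q_{R,S}\cap E_T$ since no boundary trace condition is imposed at the lateral portion of $\pl E$. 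The treatment of the time-derivative term proceeds exactly as before, yielding after $h\downarrow 0$ and $\varepsilon\downarrow 0$ the $\mathfrak g_+(u,k)$-terms at the time slices plus the term $\iint \partial_t\zeta^p\,\mathfrak g_+(u,k)$. The diffusion term is handled via \eqref{Eq:1:2p} and Young's inequality, producing the $|D(u-k)_+|^p\zeta^p$ term together with the absorbable gradient term involving $(u-k)_+^p|D\zeta|^p$.

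The only genuinely new step is the control of the boundary integral
\[
	\mathbf{J} := \iint_{Q_{R,S}\cap S_T}\psi(x,t,u)(u-k)_+\zeta^p\,\d\sigma\dt.
\]
Here I would invoke the bound \eqref{N-data}, then apply the trace inequality from \cite[Proposition~18.1]{DB-RA} slice-by-slice in time, followed by Young's inequality, exactly as in the paragraph preceding the statement. This yields
\[
	\mathbf{J} \le \tfrac{C_o}{2p}\iint_{Q_{R,S}\cap E_T}\zeta^p|D(u-k)_+|^p\,\dx\dt + \boldsymbol\gamma\iint_{Q_{R,S}\cap E_T}(u-k)_+^p|D\zeta|^p\,\dx\dt + \boldsymbol\gamma C_2^{\frac{p}{p-1}}\iint_{Q_{R,S}\cap E_T}\zeta^p\chi_{\{u>k\}}\,\dx\dt.
\]
The first term on the right is absorbed into the gradient term on the left-hand side of the energy inequality, the second is already of the admissible form, and the third gives rise to the new characteristic-function term in the statement.

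Combining these estimates and following the final step of the proof of Proposition~\ref{Prop:2:1}—passing to the limits $t_1\downarrow t_o-S$ and $t_2\uparrow t_o$ and taking the essential supremum in $t_2$—yields the asserted inequality. The main technical obstacle is the boundary integral $\mathbf{J}$: I need to pass from an $L^1$ trace on $\pl E$ to a volume $L^p$ term in a way compatible with the diffusion term, and this is precisely what the $C^1$-regularity of $\pl E$ together with the trace inequality enables. Once that piece is in place, all remaining steps are direct transcriptions of the interior argument.
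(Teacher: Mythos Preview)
Your proposal is correct and follows essentially the same approach as the paper: you reproduce the interior argument of Proposition~\ref{Prop:2:1} on $Q_{R,S}\cap E_T$, keep the surface integral coming from the Neumann datum, and then estimate that boundary term via \eqref{N-data}, the slice-wise trace inequality, and Young's inequality to produce an absorbable gradient term plus the $C_2^{p/(p-1)}$ characteristic-function term. This is precisely the paper's derivation in the paragraphs preceding the statement.
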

\section{Expansion of Positivity}

We first introduce the notation that is used throughout this section. 
For a compact set $K\subset\rr^N$ and
 a cylinder $\mathcal{Q}\df{=}K\times(T_1,T_2]\subset E_T$
we introduce numbers $\boldsymbol\mu^{\pm}$ and $\boldsymbol\om$ satisfying
\begin{equation*}
	\boldsymbol\mu^+\ge\essup_{\mathcal{Q}}u,
	\quad 
	\boldsymbol\mu^-\le\essinf_{\mathcal{Q}} u,
	\quad
	\boldsymbol\om\ge\boldsymbol\mu^+-\boldsymbol\mu^-.
\end{equation*}
We also assume $(x_o,t_o)\in \mathcal{Q}$, such that the forward
cylinder 
\begin{equation}\label{Eq:1:5}
	K_{8\varrho}(x_o)\times\big(t_o,t_o+(8\varrho)^p\big]\subset\mathcal{Q}.
\end{equation}
Next we state our main proposition of this section, which will be the main ingredient
in the proof of Theorem~\ref{Thm:1:1}.

\begin{proposition}\label{Prop:1:1}
	Let $u$ be a locally bounded, local, weak sub(super)-solution to \eqref{Eq:1:1} -- \eqref{Eq:1:2p} in $E_T$.
	Suppose for some $(x_o,t_o)\in E_T$, $M>0$, $\al\in(0,1)$ and $\varrho>0$ we have \eqref{Eq:1:5} and
	\begin{equation*}
		\left|\left\{\pm\big(\boldsymbol \mu^{\pm}-u(\cdot, t_o)\big)\ge M\right\}\cap K_\varrho(x_o)\right|
		\ge
		\al \big|K_\varrho\big|.
	\end{equation*}
There exist constants $\xi$, $\dl$ and $\eta$ in $(0,1)$ depending only on the data and $\al$,
such that either
\begin{equation*}
	\big|\boldsymbol\mu^{\pm}\big|>\xi M
\end{equation*}
 or
\begin{equation*}
	\pm\big(\boldsymbol \mu^{\pm}-u\big)\ge\eta M
	\quad
	\mbox{a.e.~in $K_{2\varrho}(x_o)\times\big(t_o+\dl(\tfrac12\varrho)^p,t_o+\dl\varrho^p\big],$}
\end{equation*}
where 
$$
	\xi
	=
	\left\{
	\begin{array}{cl}
	2\eta, & \mbox{if $p>2$,} \\[5pt]
	8, & \mbox{if $1<p\le 2$.}
	\end{array}
	\right.
$$
\end{proposition}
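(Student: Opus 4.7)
I treat the super-solution case; the sub-solution case is symmetric via the parabolicity stated in Section~\ref{S:1:4:2}. Translate so that $(x_o,t_o)=(0,0)$ and assume throughout the plan that $|\boldsymbol\mu^-|\le\xi M$, since otherwise the first alternative already gives the conclusion. The single crucial consequence of this assumption is a uniform control of the weight appearing in Lemma~\ref{lem:g}: for every level $k\in[\boldsymbol\mu^-,\boldsymbol\mu^-+M]$ and on the set $\{u<k\}$, we have $|u|+|k|\le 2(\xi+1)M$, hence
\[
    \tfrac{1}{\boldsymbol\gamma}\,M^{p-2}(u-k)_-^2
    \le \mathfrak g_-(u,k) \le
    \boldsymbol\gamma\,M^{p-2}(u-k)_-^2,
\]
with $\boldsymbol\gamma=\boldsymbol\gamma(p,\xi)$. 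The nonlinear time term of Proposition~\ref{Prop:2:1} thereby reduces to a quadratic quantity modulated by the fixed scale $M^{p-2}$, placing us in a classical parabolic regime after the intrinsic time rescaling $\tau=M^{p-2}t$.

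With this reduction, the proof follows a standard two-step scheme in the spirit of \cite[Ch.~4]{DBGV-mono}. In a first \emph{time-propagation} step, Proposition~\ref{Prop:2:1} is applied with level $k=\boldsymbol\mu^-+\tfrac{1}{2}M$ and a suitable spatial cutoff on $K_\varrho\times(0,\delta\varrho^p)$; the hypothesis at $t=0$ renders the initial-slice term of the energy estimate small (of order $M^p(1-\alpha)|K_\varrho|$), while the above reduction makes all remaining constants quantifiable in terms of data and $\alpha$. A standard De~Giorgi-type measure comparison then yields constants $\delta=\delta(\text{data},\alpha)\in(0,1)$ and $\sigma=\sigma(\alpha)\in(0,1)$ such that
\[
    \bigl|\{u(\cdot,t)\ge\boldsymbol\mu^-+\tfrac{1}{4}M\}\cap K_{(1-\sigma)\varrho}\bigr|
    \ge
    \tfrac{\alpha}{2}|K_\varrho|
    \quad\text{for all }t\in(0,\delta\varrho^p].
\]
In a second \emph{De~Giorgi iteration} step, Proposition~\ref{Prop:2:1} is iterated on nested cylinders shrinking to the target slab, with levels descending towards $\boldsymbol\mu^-+\eta M$; parabolic Sobolev embedding, again combined with the reduction from the first paragraph, produces a recursive inequality of De~Giorgi type that drives the normalised sub-level measures to zero provided $\eta=\eta(\text{data},\alpha)\in(0,1)$ is chosen small enough. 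This yields $u\ge\boldsymbol\mu^-+\eta M$ a.e.\ on an inner cylinder; the spatial expansion from $K_\varrho$ to $K_{2\varrho}$ is then obtained via the ample room in the forward cylinder \eqref{Eq:1:5} by a translation-and-cover argument.

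The main obstacle throughout is the weighted structure of the natural energy: the quantity $\mathfrak g_\pm$, which replaces the familiar $(u-k)_\pm^2$ when the time derivative is $\partial_t(|u|^{p-2}u)$, carries the weight $(|u|+|k|)^{p-2}$ from Lemma~\ref{lem:g}, which degenerates for $p>2$ and becomes singular for $1<p<2$ near $u=0$. Without further information this weight cannot be absorbed into a single scale, and the De~Giorgi scheme above does not close with constants depending only on data and $\alpha$. The dichotomy $|\boldsymbol\mu^-|\le\xi M$ is the minimal hypothesis that guarantees uniform comparability of the weight to $M^{p-2}$ on all relevant truncation sets; the distinct values $\xi=2\eta$ for $p>2$ versus $\xi=8$ for $1<p\le 2$ reflect the opposite directions of the weight's behaviour in the two regimes and are calibrated so that when the first alternative does hold, the desired information on $u$ follows by comparison with $\eta M$ in each case.
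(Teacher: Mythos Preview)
Your two-step scheme (time propagation followed by De~Giorgi iteration) is the right skeleton, but it misses the mechanism that produces the \emph{spatial expansion} from $K_\varrho$ to $K_{2\varrho}$. After your step~1 the measure information lives only on $K_{(1-\sigma)\varrho}$; a De~Giorgi iteration fed by that information can conclude pointwise only on a \emph{smaller} cube, not on $K_{2\varrho}$. The ``translation-and-cover'' patch does not close with a fixed $\xi$: re-running the argument from a pointwise conclusion at scale $\eta_1 M$ would require $|\boldsymbol\mu^-|\le\xi\eta_1 M$ at the next pass, a strictly stronger hypothesis than the one you started from. The paper inserts a third step between your two: a shrinking lemma (Lemma~\ref{Lm:3:2}) based on De~Giorgi's isoperimetric inequality, applied on the \emph{enlarged} cube $K_{4\varrho}$. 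Since $K_\varrho\subset K_{4\varrho}$, the slice-wise measure bound from step~1 is still available there, and the isoperimetric argument converts it into space--time smallness of $\{u\le\boldsymbol\mu^-+\eps M/2^{j_*}\}$ on $K_{4\varrho}\times(0,\delta\varrho^p]$ for $j_*$ as large as desired. The De~Giorgi lemma (Lemma~\ref{Lm:3:3}) then shrinks $K_{4\varrho}$ to $K_{2\varrho}$. This is also where the specific values of $\xi$ originate: for $p>2$ Lemma~\ref{Lm:3:2} requires $|\boldsymbol\mu^-|<\eps M/2^{j_*}$, and since $\eta=\eps/2^{j_*+1}$ this forces $\xi=2\eta$; for $1<p\le2$ only the coarser bound $|\boldsymbol\mu^-|\le 8M$ from Lemmas~\ref{Lm:3:1} and~\ref{Lm:3:3} is needed. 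Your explanation of the $\xi$ dichotomy does not reflect this.

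A smaller but genuine issue: your displayed two-sided bound $\mathfrak g_-(u,k)\asymp M^{p-2}(u-k)_-^2$ is false as stated. From $|u|+|k|\le CM$ you get only one direction in each regime (the upper bound for $p>2$, the lower bound for $p<2$); the other direction fails when $u$ and $k$ are both near zero. The paper does not assert uniform comparability: in the De~Giorgi lemma it uses the level-dependent lower bound $|u|+|k_n|\ge k_n-\tilde k_n=2^{-(n+3)}M$ (see~\eqref{Eq:sample}), and the resulting factor $2^{-p(n+3)}$ is absorbed into the geometric constant $\boldsymbol b^n$ of the recursion.
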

The proof of Proposition~\ref{Prop:1:1} is a straightforward consequence
of Lemmas~\ref{Lm:3:1} -- \ref{Lm:3:3} in the following sections.
Before presenting proofs, some remarks are in order.
\begin{remark}\label{Rem:1.1}\upshape
	By repeated applications of Proposition \ref{Prop:1:1},
	we could conclude that for an arbitrary $A>1$,
	there exists some $\bar\eta\in (0,1)$ depending on $\al$, the data and also on $A$, such that
	\begin{equation*}
	\pm\big(\boldsymbol\mu^{\pm}-u\big)\ge\bar\eta M
	\quad
	\mbox{a.e.~in $K_{2\varrho}(x_o)\times(t_o+\varrho^p,t_o+A\varrho^p]$,}
\]
provided this cylinder is included in $\mathcal{Q}$ and $|\boldsymbol\mu^{\pm}|<\xi M$, where $\xi$ is the parameter from Proposition~\ref{Prop:1:1}.
\end{remark}
\begin{remark}\upshape
Proposition~\ref{Prop:1:1} exhibits the spread of pointwise positivity both in time and in space.
Nonetheless, in the proof of H\"older regularity 
we only need the positivity in a smaller cube than the one of the initial measure information assigned.
Incidentally, Proposition~\ref{Prop:1:1} can be exploited to give an alternative proof of Harnack's inequality
for non-negative solutions (cf. \cite{GV, KK,Trud0}). We will present it in Appendix~\ref{Append:2}.
\end{remark}
\begin{remark}\upshape
The statement of Proposition~\ref{Prop:1:1} presents
an either-or form. This is typical when power-like nonlinearity appears in an equation
and as a result, adding a constant to a solution to \eqref{Eq:1:1} -- \eqref{Eq:1:2p}
in general does not yield another solution to the same equation.
We mention that a proposition of similar type has been used in \cite{Liao}
to deal with the H\"older regularity for the porous medium type equation.
\end{remark}
\begin{remark}\label{Rmk:4:4}\upshape
Up to a proper adjustment of coefficients, 
the prototype equation of \eqref{Eq:1:1} -- \eqref{Eq:1:2p} can be written
in a formal way as
\[
|u|^{p-2} u_t=\Dl_p u.
\]
Seemingly the equation is homogeneous in $u$, and  cylinders of the type $Q_\rho=K_\rho\times(-\rho^p,0]$
appear to be the correct ones to examine the equation.
However, a careful inspection of proofs of Lemma~\ref{Lm:3:1} -- \ref{Lm:3:3}
will reveal a subtle yet crucial difference between the role of $u$ in the absolute value on the left-hand side 
and that of others. Loosely speaking, $|u|$ represents the extrema, while other $u$'s stand for the oscillation.
Notice also that when we apply Proposition~\ref{Prop:1:1} to prove Theorem~\ref{Thm:1:1},
the typical $M$ will be $a\boldsymbol\om$ for some $a$ in $(0,1)$.
Under this point of view, the above equation can be interpreted in a probably improper but heuristic manner that
\[
[\boldsymbol\mu]^{p-2}\frac{[\boldsymbol\om]}{[t]}=\frac{[\boldsymbol\om]^{p-1}}{[x]^p},
\]
assuming the symbols are self-suggestive.
This hints the correct cylinder to examine the equation is actually
\[
Q_{\rho}(\theta)=K_\rho\times(-\theta\rho^p,0]
\quad\text{ where }\theta=\left(\frac{[\boldsymbol\om]}{[\boldsymbol\mu]}\right)^{2-p}.
\]
The time scaling by $\theta$ reflects the competition between $[\boldsymbol\om]$ and $[\boldsymbol\mu]$
via either-or alternatives.
A closer inspection of the proofs of Lemma~\ref{Lm:3:1} and Lemma~\ref{Lm:3:3}
shows it is required that $\theta\simeq1$, while the proof of Lemma~\ref{Lm:3:2}
uses $\theta\lesssim1$ only. This explains why the quantities $\boldsymbol\mu^{\pm}$
enter into Proposition~\ref{Prop:1:1} via an either-or form, such that
they are comparable with $\xi\boldsymbol\om$ for
$\xi= 2\eta$ when $p>2$, whereas $\xi=8$ suffices when $1<p<2$.
\end{remark}

\subsection{Propagation of Positivity in Measure}
\begin{lemma}\label{Lm:3:1}
	Let $M>0$ and $\al\in(0,1)$. Then, there exist $\dl$ and $\eps$ in $(0,1)$,
	depending only on the data and $\al$, such that whenever $u$ is a locally bounded, local, 
	weak sub(super)-solution to \eqref{Eq:1:1} -- \eqref{Eq:1:2p} in $E_T$ satisfying
	\begin{equation*}
	\Big|\Big\{
		\pm\big(\boldsymbol \mu^{\pm}-u(\cdot, t_o)\big)\ge M
		\Big\}\cap K_{\varrho}(x_o)\Big|
		\ge\al \big|K_{\varrho}\big|,
	\end{equation*}
	then either $$|\boldsymbol \mu^{\pm}|>8M$$ or
	\begin{equation}\label{Eq:3:1}
	\Big|\Big\{
	\pm\big(\boldsymbol \mu^{\pm}-u(\cdot, t)\big)\ge \eps M\Big\} \cap K_{\varrho}(x_o)\Big|
	\ge\frac{\al}2 |K_\varrho|
	\quad\mbox{ for all $t\in(t_o,t_o+\dl\varrho^p]$.}
\end{equation}
\end{lemma}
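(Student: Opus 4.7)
The proof naturally splits via the either/or alternative stated in the lemma. We focus on super-solutions with the ``$-$'' sign; the sub-solution case is symmetric. The hypothesis then reads $\bigl|\{u(\cdot,t_o) \ge \boldsymbol\mu^- + M\}\cap K_\rho(x_o)\bigr| \ge \alpha |K_\rho|$, and we aim to propagate it at the level $\boldsymbol\mu^- + \epsilon M$ throughout $K_\rho(x_o) \times (t_o, t_o + \delta\rho^p]$. The strategy is a DiBenedetto-type logarithmic estimate, tailored to the inhomogeneous weight $(|u|+|k|)^{p-2}$ concealed inside $\mathfrak g_-$. The first move is to neutralise that weight via the either/or dichotomy: if $|\boldsymbol\mu^-| > 8M$ we are done; otherwise for every level $k \in [\boldsymbol\mu^-, \boldsymbol\mu^- + M]$ and on the support of $(u-k)_-$ we have $|u|+|k| \le 2|\boldsymbol\mu^-| + 2M \le 18M$, so Lemma~\ref{lem:g} sandwiches $\mathfrak g_-(u,k)$ between constant multiples of $M^{p-2}(u-k)_-^2$. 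The geometry is thereby reduced to the $p=2$ case with a frozen coefficient $M^{p-2}$.

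Fix $k = \boldsymbol\mu^- + M$ and, for a small parameter $c\in(0,\tfrac14]$ to be chosen, set $\Psi(s) := \ln\bigl((1+c)M/((1+c)M - s)\bigr)$ for $s \in [0, M]$. Following the Steklov-mollification scheme from the proof of Proposition~\ref{Prop:2:1}, test the weak super-solution inequality with $\zeta^p(x)\,\Psi\Psi'\bigl((u-k)_-\bigr)$, where $\zeta=\zeta(x)$ is a time-independent cutoff equal to $1$ on $K_{(1-\sigma)\rho}(x_o)$, vanishing on $\partial K_\rho(x_o)$, with $|D\zeta|\le\boldsymbol\gamma/(\sigma\rho)$. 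The ellipticity in \eqref{Eq:1:2p} absorbs the resulting gradient square, while the $|D\zeta|$ tail is controlled by the uniform bound $(u-k)_- \le M$; a routine calculation parallel to the proof of Proposition~\ref{Prop:2:1} delivers
\begin{equation*}
\int_{K_{(1-\sigma)\rho}(x_o)} \Psi^2\bigl((u(\cdot,t)-k)_-\bigr)\,\dx
\le
\int_{K_\rho(x_o)} \Psi^2\bigl((u(\cdot,t_o)-k)_-\bigr)\,\dx
+ \frac{\boldsymbol\gamma\,\delta}{\sigma^p}|K_\rho|,
\end{equation*}
uniformly for $t \in (t_o, t_o+\delta\rho^p]$.

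To read off the measure information, note that at $t_o$ the integrand vanishes on the set of measure $\ge \alpha|K_\rho|$ supplied by the hypothesis and is at most $\Psi(M)^2=[\ln((1+c)/c)]^2$ on its complement, while on $\{u(\cdot,t)\le\boldsymbol\mu^-+\epsilon M\}$ with $\epsilon \le c$ we have $\Psi \ge \ln((1+c)/(2c))$, so the integrand is bounded below by $[\ln((1+c)/(2c))]^2$. Combining these, dividing by $[\ln((1+c)/(2c))]^2$, noting that the ratio $\ln((1+c)/c)/\ln((1+c)/(2c)) \to 1$ as $c\downarrow 0$, and accounting for the boundary layer via $|K_\rho\setminus K_{(1-\sigma)\rho}|\le N\sigma|K_\rho|$, we choose first $c$ small in terms of $\alpha$, then $\sigma$ in terms of $\alpha$ and $N$, and finally $\delta$ small in terms of the previous constants, so that
\[
\bigl|\{u(\cdot,t)\le\boldsymbol\mu^-+\epsilon M\}\cap K_\rho(x_o)\bigr| \le (1-\tfrac{\alpha}{2})|K_\rho|,
\]
which is \eqref{Eq:3:1}.

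The main obstacle is not the logarithmic estimate itself, which is by now standard, but rather the first step: the non-homogeneity of $\mathfrak g_\pm$ in $u$ is precisely what forces the either/or alternative and the threshold ``$8M$'' for $|\boldsymbol\mu^\pm|$, without which the weight $(|u|+|k|)^{p-2}$ cannot be frozen at the scale-setting value $M^{p-2}$. Once that dichotomy is in place the computation reduces to a classical DiBenedetto-type argument (see Chapter~III of \cite{DB}).
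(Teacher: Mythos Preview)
Your approach via a logarithmic test function is different from the paper's, and as written it contains a genuine gap in the time-derivative computation.

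When you test the super-solution with $\zeta^p\,\Psi\Psi'\bigl((u-k)_-\bigr)$, the time part is
\[
\partial_t\bigl(\boldsymbol{u}^{p-1}\bigr)\cdot \Psi\Psi'\bigl((u-k)_-\bigr)
=(p-1)|u|^{p-2}\,u_t\,\Psi\Psi'\bigl((u-k)_-\bigr),
\]
which integrates to $H(u):=-(p-1)\int_u^{k}|s|^{p-2}\,\Psi(k-s)\Psi'(k-s)\,\d s$, \emph{not} to $\tfrac12\Psi^2\bigl((u-k)_-\bigr)$. Your displayed inequality with $\Psi^2$ on both sides therefore does not follow. The preceding ``sandwich'' claim, that Lemma~\ref{lem:g} gives $\mathfrak g_-(u,k)\asymp M^{p-2}(u-k)_-^2$, is what would be needed to freeze the weight, but it is only one-sided: you give $|u|+|k|\le 18M$, yet there is no matching lower bound $|u|+|k|\ge cM$ on the full support of $(u-k)_-$. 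Indeed, since $|\boldsymbol\mu^-|\le 8M$ the interval $[\boldsymbol\mu^-,k]$ may contain $0$, and then $|s|^{p-2}$ degenerates (blows up if $p<2$, vanishes if $p>2$). So the reduction ``to the $p=2$ case with a frozen coefficient $M^{p-2}$'' fails exactly at this step. A logarithmic argument can be pushed through if one works with the correct function $H$ and analyses the ratio $H(\boldsymbol\mu^-+\epsilon M)/H(\boldsymbol\mu^-)$, but that is substantially more than what you have written and is not ``routine''. (The diffusion tail is also not simply $\boldsymbol\gamma\delta/\sigma^p\,|K_\rho|$: for $p\neq2$ the standard logarithmic estimate produces a factor $\Psi\,|\Psi'|^{2-p}$, carrying powers of $M$ and $\ln(1/c)$.)

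The paper avoids all of this by not using a logarithmic test function. It applies the energy estimate of Proposition~\ref{Prop:2:1} with the plain test function $(u-k)_-$, obtaining $\mathfrak g_-$ on both sides, and then compares level sets through the ratio
\[
I_\eps=\frac{\int_{\boldsymbol\mu^-}^{k_\eps}|s|^{p-2}(s-k)_-\,\d s}{\int_{k_\eps}^{k}|s|^{p-2}(s-k)_-\,\d s},
\qquad k_\eps=\boldsymbol\mu^-+\eps M.
\]
Here the denominator is an integral over the fixed interval $[k_\eps,k]$ of length $(1-\eps)M\ge M/2$, so $|k_\eps|+|k|\ge k-k_\eps\ge M/2$ and Lemma~\ref{lem:g} gives the clean lower bound $\ge\boldsymbol\gamma^{-1}M^p$; the numerator is $\le\boldsymbol\gamma M^p\eps$ by Lemma~\ref{lem:Acerbi-Fusco}. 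One then chooses $\eps$, $\sigma$, $\delta$ in that order. This route needs the two-sided control on $|s|^{p-2}$ only at the \emph{endpoints} $k_\eps$ and $k$, not uniformly along $[\boldsymbol\mu^-,k]$, and is therefore unaffected by the degeneracy that breaks your freezing argument.
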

\begin{proof} We only show the case of super-solutions, the other case of sub-solutions being similar.
Assume $(x_o,t_o)=(0,0)$ and $|\boldsymbol \mu^{-}|\le 8M$. Otherwise there is nothing to prove.
Use the energy estimate in Proposition~\ref{Prop:2:1}
in the cylinder $Q=K_{\varrho}\times(0,\dl\varrho^p]$, with
$k=\boldsymbol\mu^-+M$ and choose a standard non-negative cutoff function 
$\z(x,t)\equiv \z(x)$ independent of time that equals $1$ on $K_{(1-\sig)\varrho}$ with $\sigma\in(0,1)$ to be chosen later 
and vanishes on $\pl K_{\varrho}$ satisfying
$|D\z|\le(\sig\varrho)^{-1}$;
in such a case, we have for all $0<t<\dl\varrho^p$, that
\begin{align}\label{start:en}
	\int_{K_\varrho\times\{t\}}\int_{u}^k &|s|^{p-2}(s-k)_-\,\ds \z^p\,\dx\nonumber\\
	&\le
	\int_{K_\varrho\times\{0\}}\int_{u}^k |s|^{p-2}(s-k)_-\,\ds\z^p\dx
	+
	\boldsymbol \gm\iint_{Q}(u-k)^{p}_-|D\z|^p\,\dx\dt.
\end{align}
In order to estimate the first integral on the right-hand side, we 
take into consideration the measure theoretical information
at the initial time $t=0$ and the fact $u\ge\boldsymbol\mu^-$. This leads to
\begin{align*}
	\int_{K_\varrho\times\{0\}}\int_{u}^k |s|^{p-2}(s-k)_-\,\ds\z^p\,\dx
	\le 
	(1-\al)|K_\varrho|\int_{\boldsymbol\mu^-}^{k} |s|^{p-2}(s-k)_-\,\ds.
\end{align*}
The second term on the right-hand side of \eqref{start:en} is estimated by
\begin{equation*}
	\iint_{Q}(u-k)^{p}_-|D\z|^p\,\dx\dt
	\le
	\frac{M^p}{(\sigma\varrho )^p}|Q| 
	= \frac{\dl M^{p}}{\sig^p} |K_\varrho |.
\end{equation*}
The left-hand side of the energy estimate \eqref{start:en} can be bounded from below by
\begin{align*}
	\int_{K_\varrho\times\{t\}}\int_{u}^k |s|^{p-2}(s-k)_-\,\ds \z^p\,\dx
	\ge 
	\big|A_{k_\eps,(1-\sig)\varrho}(t)\big| \int_{k_\eps}^k |s|^{p-2}(s-k)_-\,\ds
\end{align*}
where we have defined
\begin{equation*}
	A_{k_\eps,(1-\sig)\varrho}(t)
	=
	\big\{ u(\cdot,t)\le k_\eps\big\} \cap K_{(1-\sig)\varrho},
	\quad 
	\mbox{and}
	\quad
	k_\eps=\boldsymbol\mu^-+\eps M,
\end{equation*}
with $\epsilon\in(0,\frac12)$ to be chosen later. 
Due to Lemma~\ref{lem:g} and the fact that 
$\frac12M\le(1-\eps)M=k-k_\eps\le |k_\eps|+|k|\le 2(|\boldsymbol\mu^-|+ M)\le 18 M$ 
we can further estimate from below
\begin{align}\label{prop_1}
	\int_{k_\eps}^k|s|^{p-2}(s-k)_-\,\ds
	=
	\tfrac{1}{p-1}\, g_-(k_\eps,k)
	\ge 
	\tfrac{1}{\boldsymbol\gamma (p)}
	\big(|k_\eps|+|k|\big)^{p-2} (k-k_\eps)^2
	\ge
	\tfrac{1}{\boldsymbol\gamma (p)} M^p.
\end{align}
Notice that
\begin{align*}
	\big|A_{k_\eps,\varrho}(t)\big|
	&=
	\big|A_{k_\eps,(1-\sig)\varrho}(t)\cup (A_{k_\eps,\varrho}(t)\setminus A_{k_\eps,(1-\sig)\varrho}(t))\big|\\
	&\le 
	\big|A_{k_\eps,(1-\sig)\varrho}(t)\big|+|K_\varrho\setminus K_{(1-\sig)\varrho}|\\
	&\le
	\big |A_{k_\eps,(1-\sig)\varrho}(t)\big|+N\sig |K_\varrho|.
\end{align*}
Collecting all the above estimates yields that
\begin{align*}
	|A_{k_\eps,\varrho}(t)|
	&\le 
	\frac{\dsty\int_{\boldsymbol \mu^-}^k |s|^{p-2}(s-k)_-\,\ds }{\dsty\int_{k_\eps}^k |s|^{p-2}(s-k)_-\,\ds }
	(1-\al)|K_\varrho|
	+
	\frac{\boldsymbol\gm\dl}{\sig^p}|K_\varrho| +N\sig|K_\varrho|,
\end{align*}
for a constant $\boldsymbol\gamma =\boldsymbol\gamma (p,C_o,C_1)$.
The fractional number in the preceding  inequality can be rewritten in the form
\[
	1+I_\eps \quad\text{ where }\quad
	I_\eps =
	\frac{\dsty\int_{\boldsymbol \mu^-}^{k_\eps} |s|^{p-2}(s-k)_-\,\ds}{\dsty\int_{k_\eps}^k  |s|^{p-2}(s-k)_-\,\ds}.
\]
At this stage, we need to bound $I_\eps$ from above. Keeping in mind $|\boldsymbol \mu^-|\le 8M$ and $|k_\eps|\le 9M$ and applying Lemma~\ref{lem:Acerbi-Fusco}, we have

\[
	\int_{\boldsymbol \mu^-}^{k_\eps} |\tau|^{p-2}(\tau-k)_-\,\dtau
	\le 
	M\int_{\boldsymbol \mu^-}^{k_\eps}|\tau|^{p-2}\,\d\tau
	=
	M |s|^{p-2} s\Big|_{\boldsymbol \mu^-}^{k_\eps}
	\le
	\boldsymbol\gm (p)M^p\eps.
\]
Together with inequality \eqref{prop_1} we obtain 
\begin{align*}
	I_\eps
	\le
	\boldsymbol\gm (p) \eps.
\end{align*}
This allows us to choose the various parameters quantitatively. Indeed, 
we may  choose $\eps\in (0,1)$ small enough such that
\begin{equation*}
	(1-\al)(1+\boldsymbol \gm\eps)\le 1-\tfrac34\al.
\end{equation*}
This fixes $\eps$ as a constant depending only on $p$ and $\al$. Next, we define $\sig:=\frac{\al}{8N}$.
Finally, we choose $\delta\in (0,1)$ small enough so that
$$
	\frac{\boldsymbol \gm\dl}{\sig^p}\le\frac{\al}{8}.
$$
Note that this specifies $\dl$ as a constant depending  on the data and $\al$. With these choices we have  $|A_{k_\eps,\varrho}(t)|\le
(1-\tfrac{\al}2)|K_\varrho|$. This proves the asserted propagation of positivity \eqref{Eq:3:1}, as long as $0<t\le\delta\varrho^p$.
\end{proof}

\subsection{A Shrinking Lemma}

\begin{lemma}\label{Lm:3:2}
Suppose that in Lemma \ref{Lm:3:1} the second alternative \eqref{Eq:3:1} holds, let
$Q=K_{\varrho}(x_o)\times\left(t_o,t_o+\dl\rho^p\right]$ be the corresponding cylinder and let $\widehat Q=K_{4\varrho}(x_o)\times\left(t_o,t_o+\dl\rho^p\right]\Subset E_T$.
There exists $\boldsymbol \gm>0$ depending only on the data and $\alpha$, such that for any positive integer $j_*$,
if $1<p<2$, we have 
\begin{equation*}
	\bigg|\bigg\{
	\pm\big(\boldsymbol \mu^{\pm}-u\big)\le\frac{\eps M}{2^{j_*}}\bigg\}\cap \widehat Q\bigg|
	\le\frac{\boldsymbol\gm}{j_*^{\frac{p-1}p}}|\widehat Q|,\quad
\end{equation*}
while in the case $p>2$, the same conclusion holds provided
$|\boldsymbol \mu^{\pm}|<\eps M2^{-j_*}$.
\end{lemma}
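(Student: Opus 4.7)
\medskip
\noindent\textbf{Proof plan.} I focus on the super-solution case; the sub-solution case is symmetric. Fix $j\in\{0,1,\dots,j_*-1\}$ and introduce the truncation levels
\begin{equation*}
k_j := \boldsymbol{\mu}^- + \frac{\epsilon M}{2^j},\qquad A_j(t) := \{u(\cdot,t)\le k_j\}\cap K_{4\varrho}(x_o),
\end{equation*}
and $A_j:=\{(x,t)\in\widehat Q: u(x,t)\le k_j\}$, so that the set in question equals $A_{j_*}$. Since $k_0<k_j$ and $\{u\ge k_0\}\subset\{u\ge k_j\}$, the measure information provided by Lemma~\ref{Lm:3:1} (second alternative) yields for every $t\in(t_o,t_o+\delta\varrho^p]$
\begin{equation*}
\big|K_{4\varrho}\setminus A_j(t)\big|\ge \big|\{u(\cdot,t)\ge k_0\}\cap K_\varrho(x_o)\big|\ge \tfrac{\alpha}{2}|K_\varrho|=\tfrac{\alpha}{2\cdot 4^N}|K_{4\varrho}|.
\end{equation*}
The De~Giorgi isoperimetric inequality applied on each time slice to $u(\cdot,t)$ between the levels $k_j$ and $k_{j+1}$ then gives, using $k_j-k_{j+1}=\epsilon M/2^{j+1}$ and the lower bound on $|K_{4\varrho}\setminus A_j(t)|$,
\begin{equation*}
\frac{\epsilon M}{2^{j+1}}\,|A_{j+1}(t)|\le \boldsymbol{\gamma}(N,\alpha)\,\varrho\int_{A_j(t)\setminus A_{j+1}(t)}|Du|\,\dx.
\end{equation*}
Integrating in $t$ over $(t_o,t_o+\delta\varrho^p]$ and applying H\"older's inequality produces
\begin{equation*}
\frac{\epsilon M}{2^{j+1}}\,|A_{j+1}|\le \boldsymbol{\gamma}\varrho\,\Bigl(\iint_{\widehat Q}|D(u-k_j)_-|^p\dx\dt\Bigr)^{\frac1p}\,|A_j\setminus A_{j+1}|^{\frac{p-1}{p}}.
\end{equation*}

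\medskip
\noindent The next step is to control the gradient integral by means of the energy estimate of Proposition~\ref{Prop:2:1}, applied in the backward cylinder $K_{8\varrho}(x_o)\times(t_o,t_o+\delta\varrho^p)$ with a time-independent cutoff $\zeta$ equal to $1$ on $K_{4\varrho}(x_o)$ and satisfying $|D\zeta|\le C/\varrho$. Since $\partial_t\zeta\equiv 0$ and $(u-k_j)_-\le \epsilon M/2^j$, the volume term yields a contribution at most $\boldsymbol{\gamma}(\epsilon M/2^j)^p\varrho^{-p}|\widehat Q|$ (after absorbing $\delta$ into the constant). The main obstacle is the initial term $\int_{K_{8\varrho}\times\{t_o\}}\zeta^p\mathfrak g_-(u,k_j)\,\dx$, since we have no information on $u$ at $t=t_o$. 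Here the two sub-cases of $p$ enter.
\begin{itemize}
\item[(i)] If $1<p\le 2$, Lemma~\ref{lem:g} together with $(|u|+|k_j|)^{p-2}\le |u-k_j|^{p-2}$ (valid because $p-2\le 0$) yields $\mathfrak g_-(u,k_j)\le\boldsymbol{\gamma}(u-k_j)_-^p\le\boldsymbol{\gamma}(\epsilon M/2^j)^p$.
\item[(ii)] If $p>2$, on $\{u\le k_j\}$ we have $\boldsymbol{\mu}^-\le u\le k_j$, hence $|u|\le\max\{|\boldsymbol{\mu}^-|,|k_j|\}$. Invoking the standing hypothesis $|\boldsymbol{\mu}^-|<\epsilon M 2^{-j_*}\le \epsilon M/2^j$, we deduce $|u|+|k_j|\le 4\epsilon M/2^j$, and Lemma~\ref{lem:g} again gives $\mathfrak g_-(u,k_j)\le\boldsymbol{\gamma}(\epsilon M/2^j)^p$.
\end{itemize}
In both cases the initial term is bounded by $\boldsymbol{\gamma}(\epsilon M/2^j)^p|K_{8\varrho}|\le\boldsymbol{\gamma}(\epsilon M/2^j)^p\varrho^{-p}|\widehat Q|$, and therefore
\begin{equation*}
\iint_{\widehat Q}|D(u-k_j)_-|^p\dx\dt\le \boldsymbol{\gamma}\Bigl(\frac{\epsilon M}{2^j}\Bigr)^{\!p}\frac{|\widehat Q|}{\varrho^p}.
\end{equation*}

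\medskip
\noindent Inserting this bound into the previous H\"older inequality, the factor $\epsilon M/2^j$ cancels together with $\varrho$, leaving
\begin{equation*}
|A_{j+1}|^{\frac{p}{p-1}}\le \boldsymbol{\gamma}\,|\widehat Q|^{\frac{1}{p-1}}\,|A_j\setminus A_{j+1}|.
\end{equation*}
Summing over $j=0,1,\dots,j_*-1$ and exploiting the telescoping property $\sum_j |A_j\setminus A_{j+1}|\le|\widehat Q|$, together with the monotonicity $A_{j_*}\subset A_j$, we obtain
\begin{equation*}
j_*\,|A_{j_*}|^{\frac{p}{p-1}}\le \sum_{j=1}^{j_*}|A_j|^{\frac{p}{p-1}}\le \boldsymbol{\gamma}\,|\widehat Q|^{\frac{p}{p-1}},
\end{equation*}
which raised to the power $(p-1)/p$ is exactly the claim. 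The argument is structurally De~Giorgi's classical shrinking estimate; the only genuinely new piece of bookkeeping is the dichotomy in step~(ii), which is the reason for the extra smallness assumption on $|\boldsymbol{\mu}^\pm|$ when $p>2$.
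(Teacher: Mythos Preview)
Your proof is correct and follows essentially the same route as the paper's: the same decreasing levels $k_j=\boldsymbol\mu^-+\eps M/2^j$, the same energy estimate on $K_{8\varrho}\times(t_o,t_o+\dl\varrho^p]$ with a time-independent cutoff, the same dichotomy on $p$ to bound the initial term $\mathfrak g_-(u,k_j)$, and the same slicewise De~Giorgi isoperimetric inequality followed by the telescoping sum. The only cosmetic difference is that the paper bounds $\mathfrak g_-(u,k_j)\le\boldsymbol\gamma(|u|+|k_j|)^p$ for $p\ge2$ before using $|\boldsymbol\mu^-|<\eps M2^{-j_*}$, whereas you bound $|u|+|k_j|$ directly; the effect is identical.
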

\begin{proof}
We only show the case of super-solutions, the case of sub-solutions  being similar.
Moreover, we assume $(x_o,t_o)=(0,0)$.
We employ the energy estimate in Proposition~\ref{Prop:2:1} in $K_{8\rho}\times (0,\delta\rho^p]$ 
with levels
\begin{equation*}
	k_j:=\boldsymbol \mu^-+\frac{\eps M}{2^{j}},\quad j=0,1,\cdots, j_*,
\end{equation*}
 and introduce
 a  cutoff function $\z$ in $K_{8\varrho}$ (independent of $t$) that is equal to $1$ in $K_{4\varrho}$ and vanishes
on $\pl K_{8\varrho}$, such that $|D\z|\le\varrho^{-1}$.
Then, we obtain
\begin{align*}
	\iint_{\widehat Q} |D(u-k_j)_-|^p\,\dx\dt
	\le
	\int_{K_{8\varrho}\times\{0\}} \mathfrak g_- (u,k_j) \,\dx
	+ 
	\frac{\boldsymbol\gm}{\rho^p}
	\iint_{K_{8\rho}\times (0,\delta\rho^p]}(u-k_j)_-^p \,\dx\dt.
\end{align*}
Now we treat the individual terms of the right side separately. We begin with the first one. Due to Lemma~\ref{lem:g} we have 
\begin{equation*}
	\mathfrak g_- (u,k_j)
	\le
	\boldsymbol\gm \big(|u|+|k_j|\big)^{p-2}(u-k_j)_-^2.
\end{equation*}
When $p\ge 2$, we use $(u-k_j)_-\le |u|+|k_j|$ as well as $u\ge \boldsymbol \mu^-$ and $|\boldsymbol\mu^{-}|\le\eps M 2^{-j_*}$ to estimate
\begin{equation*}
	\mathfrak g_- (u,k_j)
	\le
	\boldsymbol\gm \big(|u|+|k_j|\big)^p \chi_{\{u\le k_j\}}
	\le
	\boldsymbol\gm\left(\frac{\eps M}{2^j}\right)^{p}.
\end{equation*}
When $1<p<2$, we again use $(u-k_j)_-\le |u|+|k_j|$ and $u\ge \boldsymbol \mu^-$ to obtain
\begin{align*}
	\mathfrak g_- (u,k_j)
	\le
	\boldsymbol\gm (u-k_j)_-^p
	\le
	\boldsymbol\gm\left(\frac{\eps M}{2^j}\right)^{p},
\end{align*}
for a constant $\boldsymbol\gamma$ depending only on $p$.
This implies in particular that
\begin{equation*}
	\mathfrak g_- (u,k_j)
	\le
	 \frac{\boldsymbol \gm}{\dl\varrho^p}
	 \left(\frac{\eps M}{2^j}\right)^{p} |\widehat Q|
\end{equation*}
holds true in any case. In the second integral appearing on the right-hand side of the energy estimate, we utilize the bound
$(u-k_j)_-\le \eps M 2^{-j}$.
Therefore, in all cases the above estimate yields
\begin{equation*}
	\iint_{\widehat Q}|D(u-k_j)_-|^p\,\dx\dt\le\frac{\boldsymbol\gm}{\dl\varrho^p}\left(\frac{\eps M}{2^j}\right)^p|\widehat Q|.
\end{equation*}
Next, we apply \cite[Chapter I, Lemma 2.2]{DB} slice wise 
to $u(\cdot,t)$ for 
$t\in\left(0,\dl\rho^p\right]$
 over the cube $K_\varrho$,
for levels $k_{j+1}<k_{j}$.
Taking into account the measure theoretical information
\begin{equation*}
	\Big|\Big\{u(\cdot, t)>\boldsymbol \mu^-+\eps M\Big\}\cap K_{\varrho}\Big|\ge\frac{\al}2 |K_\varrho|
	\qquad\mbox{for all $t\in(0,\dl\rho^p]$,}
\end{equation*}
this gives
\begin{align*}
	&(k_j-k_{j+1})\big|\big\{u(\cdot, t)<k_{j+1} \big\}
	\cap K_{4\varrho}\big|
	\\
	&\qquad\le
	\frac{\boldsymbol\gm \varrho^{N+1}}{\big|\big\{u(\cdot, t)>k_{j}\big\}\cap K_{4\varrho}\big|}	
	\int_{\{ k_{j+1}<u(\cdot,t)<k_{j}\} \cap K_{4\varrho}}|Du(\cdot,t)|\,\dx\\
	&\qquad\le
	\frac{\boldsymbol \gm\varrho}{\al}
	\bigg[\int_{\{k_{j+1}<u(\cdot,t)<k_{j}\}\cap K_{4\varrho}}|Du(\cdot,t)|^p\,\dx\bigg]^{\frac1p}
	\big|\big\{ k_{j+1}<u(\cdot,t)<k_{j}\big\}\cap K_{4\varrho}\big|^{1-\frac1p}
	\\
	&\qquad=
	\frac{\boldsymbol \gm\varrho}{\al}
	\bigg[\int_{K_{8\varrho}}|D(u-k_j)_-(\cdot,t)|^p\,\dx\bigg]^{\frac1p}
	\big[ |A_j(t)|-|A_{j+1}(t)|\big]^{1-\frac1p}.
\end{align*}
Here we used in the last line the short hand notation $ A_j(t):= \big\{u(\cdot,t)<k_{j}\big\}\cap K_{4\varrho}$.
We now integrate the last inequality with respect to $t$ over  $(0,\dl\rho^p]$ and apply H\"older's inequality in time.
With the abbreviation $A_j=\{u<k_j\}\cap \widehat Q$ this procedure leads to
\begin{align*}
	\frac{\eps M}{2^{j+1}}\big|A_{j+1}\big|
	&\le
	\frac{\boldsymbol \gm\varrho}{\al}\bigg[\iint_{\widehat Q}|D(u-k_j)_-|^p\,\dx\dt\bigg]^\frac1p
	\big[|A_j|-|A_{j+1}|\big]^{1-\frac{1}p}\\
	&\le
	\boldsymbol\gm \frac{\eps M}{2^j}|\widehat Q|^{\frac1p}\big[|A_j|-|A_{j+1}|\big]^{1-\frac{1}p}.
\end{align*}
Recall that $\delta$ depends on the data and $\alpha$. Therefore $\boldsymbol\gm$ depends on only the data and $\alpha$. 
Now take the power $\frac{p}{p-1}$ on both sides of the above inequality to obtain
\[
	\big|A_{j+1}\big|^{\frac{p}{p-1}}
	\le
	\boldsymbol \gm|\widehat Q|^{\frac1{p-1}}\big[|A_j|-|A_{j+1}|\big].
\]
Add these inequalities from $0$ to $j_*-1$ to obtain
\[
	j_* |A_{j_*}|^{\frac{p}{p-1}}\le\boldsymbol\gm|\widehat Q|^{\frac{p}{p-1}}.
\]
From this we conclude
\[
	|A_{j_*}|\le\frac{\boldsymbol\gm}{j_*^{\frac{p-1}p}}|\widehat Q|.
\]
This completes the proof.
\end{proof}
\subsection{A DeGiorgi-type Lemma}

Here we prove a DeGiorgi-type Lemma on cylinders of the form $Q_\rho(\theta)$. In the application $\theta$ will be a universal constant depending only on the data, in particular $\theta$ will be independent of the solution.

\begin{lemma}\label{Lm:3:3}
 Let $u$ be a locally bounded, local sub(super)-solution to \eqref{Eq:1:1} -- \eqref{Eq:1:2p} in $E_T$ and
$(x_o,t_o) + Q_\varrho(\theta) =K_\varrho (x_o)\times (t_o-\theta\varrho^p, t_o]\Subset E_T$. There exists a constant $\nu\in(0,1)$ depending only on 
 the data and $\theta$, such that if
\begin{equation*}
	\Big|\Big\{
	\pm\big(\boldsymbol \mu^{\pm}-u\big)\le M\Big\}\cap (x_o,t_o)+Q_{\varrho}(\theta)\Big|
	\le
	\nu|Q_{\varrho}(\theta)|,
\end{equation*}
then either
\begin{equation*}
	|\boldsymbol\mu^{\pm}|>8M,
\end{equation*}
or
\begin{equation*}
	\pm\big(\boldsymbol\mu^{\pm}-u\big)\ge\tfrac{1}2M
	\quad
	\mbox{a.e.~in $(x_o,t_o)+Q_{\frac{1}2\varrho}(\theta)$.}
\end{equation*}
\end{lemma}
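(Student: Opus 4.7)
The plan is a De Giorgi iteration on shrinking cylinders and levels, following the standard parabolic template but adapted to handle the nonlinear accumulation term $\mathfrak g_\pm$. By symmetry it suffices to treat super-solutions, and by translation I take $(x_o,t_o)=(0,0)$. I assume at the outset that $|\boldsymbol\mu^-|\le 8M$, since otherwise the first alternative is already satisfied. Introduce the shrinking cylinders $Q_n=K_{\varrho_n}\times(-\theta\varrho_n^p,0]$ with $\varrho_n=\tfrac{\varrho}{2}(1+2^{-n})$, the decreasing levels $k_n=\boldsymbol\mu^-+\tfrac{M}{2}(1+2^{-n})$, the truncations $v_n=(u-k_n)_-$, the bad sets $A_n=\{v_n>0\}\cap Q_n$, and the normalized densities $Y_n=|A_n|/|Q_n|$. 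The hypothesis of the lemma reads $Y_0\le\nu$, and the target is $Y_n\to 0$, which is exactly $u\ge\boldsymbol\mu^-+\tfrac{M}{2}$ a.e.\ on $Q_{\varrho/2}(\theta)$.

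Next, I feed Proposition~\ref{Prop:2:1} with level $k_n$ on $Q_n$ and a cutoff $\zeta_n$ supported in $Q_n$, identically $1$ on $Q_{n+1}$, vanishing on the parabolic boundary of $Q_n$, with $|D\zeta_n|\le\boldsymbol\gm\,2^n/\varrho$ and $|\partial_t\zeta_n^p|\le\boldsymbol\gm\,2^{pn}/(\theta\varrho^p)$. The crux is to convert $\mathfrak g_-$ into usable information on $v_n$: on $\{u<k_n\}$ one has $v_n\le M$ and, owing to $|\boldsymbol\mu^-|\le 8M$, also $|u|+|k_n|\le 18M$. Lemma~\ref{lem:g} then yields the upper bound $\mathfrak g_-(u,k_n)\le\boldsymbol\gm M^p\chi_{\{v_n>0\}}$ and, case by case, the lower bound $\mathfrak g_-(u,k_n)\ge\boldsymbol\gm^{-1}v_n^p$ when $p\ge 2$ (via $(|u|+|k_n|)^{p-2}\ge v_n^{p-2}$), respectively $\mathfrak g_-(u,k_n)\ge\boldsymbol\gm^{-1}M^{p-2}v_n^2$ when $1<p<2$ (via $(|u|+|k_n|)^{p-2}\ge(18M)^{p-2}$). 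Plugging these back and using $v_n\le M$ on the right-hand side, the energy estimate simplifies to
\begin{equation*}
    \essup_{-\theta\varrho_{n+1}^p<t<0}\int_{K_{\varrho_{n+1}}}\Phi_p(v_n)(\cdot,t)\,\dx + \iint_{Q_{n+1}}|Dv_n|^p\,\dx\dt \le \boldsymbol\gm(\theta)\,\frac{2^{pn}M^p}{\varrho^p}\,|A_n|,
\end{equation*}
where $\Phi_p(v)=v^p$ for $p\ge 2$ and $\Phi_p(v)=M^{p-2}v^2$ for $1<p<2$; the $\theta$-dependence enters solely through the $|\partial_t\zeta_n^p|$-term.

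Finally, the parabolic Gagliardo--Nirenberg inequality applied to $\zeta_n v_n$ (which vanishes on the lateral boundary of $Q_{n+1}$) interpolates the two pieces of information above into
\begin{equation*}
    \iint_{Q_{n+1}}v_n^{p\kappa}\,\dx\dt\le \boldsymbol\gm(\theta)\Bigl(\frac{2^{pn}M^p}{\varrho^p}\Bigr)^{\kappa}|A_n|^{\kappa}
\end{equation*}
for a suitable $\kappa=\kappa(p,N)>1$. Combining with H\"older's inequality and the pointwise bound $v_n\ge k_n-k_{n+1}=M/2^{n+2}$ on $A_{n+1}$ gives
\begin{equation*}
    \Bigl(\tfrac{M}{2^{n+2}}\Bigr)^p|A_{n+1}|\le\Bigl(\iint_{Q_{n+1}}v_n^{p\kappa}\,\dx\dt\Bigr)^{1/\kappa}|A_n|^{1-1/\kappa},
\end{equation*}
which, after collecting powers of $M$ and $\varrho$, produces a recursive bound of the form $Y_{n+1}\le\boldsymbol\gm(\theta)\,b^n Y_n^{1+\alpha}$ with $b>1$ and $\alpha>0$ depending only on $p$ and $N$. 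The classical fast geometric convergence lemma (cf.\ \cite[Chap.~I, Lemma~4.1]{DB}) then fixes $\nu=\nu(\mathrm{data},\theta)\in(0,1)$ so small that $Y_0\le\nu$ forces $Y_n\to 0$, completing the proof. The principal obstacle, and the very reason the dichotomy $|\boldsymbol\mu^\pm|>8M$ is built into the statement, lies in the first step: unlike in the pure $p$-Laplace setting, $\mathfrak g_\pm$ is not a power of $v_n$, and its ratio with $v_n^p$ (or with $v_n^2$) is simultaneously bounded above and below only when $|u|+|k_n|$ is comparable to $M$. The assumption $|\boldsymbol\mu^-|\le 8M$, together with $v_n\le M$, is precisely what secures this comparability and prevents the iteration from degenerating when $p\ne 2$ and $u$ is close to zero.
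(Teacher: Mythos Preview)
Your argument is correct and follows the same De Giorgi iteration as the paper; the only cosmetic differences are that the paper inserts intermediate levels $\tilde k_n=(k_n+k_{n+1})/2$ and radii $\tilde\varrho_n$, and applies the parabolic Sobolev embedding uniformly with $m=2$ for all $p>1$, arriving at the recursion $Y_{n+1}\le\boldsymbol\gamma b^n Y_n^{1+1/(N+2)}$. One small imprecision worth fixing: for $1<p<2$ your displayed bound $\iint_{Q_{n+1}} v_n^{p\kappa}\,\dx\dt\le\boldsymbol\gamma(\theta)(2^{pn}M^p/\varrho^p)^\kappa|A_n|^\kappa$ cannot hold with a single $\kappa$ in both places---the embedding with $m=2$ yields integrability $p(N+2)/N$ on $v_n$ but exponent $(N+p)/N$ on $|A_n|$---yet after the H\"older step the recursion $Y_{n+1}\le\boldsymbol\gamma(\theta)b^n Y_n^{1+p/(N+2)}$ still closes and the conclusion is unaffected.
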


\begin{proof}
We prove the case of super-solutions only, the case of sub-solutions being similar.
Assume $(x_o,t_o)=(0,0)$ and $|\boldsymbol \mu^-|\le 8M$. Otherwise there is nothing to prove. 
In order to employ the energy estimate in Proposition~\ref{Prop:2:1}, 
we notice first that due to Lemma~\ref{lem:g} we have
\begin{equation*}
	\mathfrak g_-(u,k)
	\le
	\boldsymbol\gamma \big(|u|+|k|\big)^{p-2}(u-k)_-^2
	\le
	\boldsymbol\gamma \big(|u|+|k|\big)^{p-1}(u-k)_-
\end{equation*}
and for $\tilde k<k$ there holds $(u-k)_-\ge (u-\tilde k)_-$. Therefore, the energy estimate yields 
\begin{align*}
	&\essup_{-\theta\varrho^p<t<0}
	\int_{K_\varrho}\z^p\big(|u|+|k|\big)^{p-2}(u-\tilde{k})_-^2\,\dx
	+
	\iint_{Q_\varrho(\theta)}\z^p|D(u-\tilde{k})_-|^p\,\dx\dt\\
	&\qquad\le
	\boldsymbol\gm\iint_{Q_\varrho(\theta)}(u-k)^{p}_-|D\z|^p\,\dx\dt
	+
	\boldsymbol\gm \iint_{Q_\varrho(\theta)}\big(|u|+|k|\big)^{p-1} (u-k)_-|\partial_t\z^p|\,\dx\dt,
\end{align*}
for any non-negative piecewise smooth cutoff function $\zeta$ vanishing on the parabolic boundary of $Q_\varrho(\theta)$. 
In order to use this energy estimate, we set
\begin{align}\label{choices:k_n}
	\left\{
	\begin{array}{c}
	\displaystyle k_n=\boldsymbol\mu^-+\frac{M}2+\frac{M}{2^{n+1}},\quad \tilde{k}_n=\frac{k_n+k_{n+1}}2,\\[5pt]
	\displaystyle \varrho_n=\frac{\varrho}2+\frac{\varrho}{2^{n+1}},
	\quad\tilde{\varrho}_n=\frac{\varrho_n+\varrho_{n+1}}2,\\[5pt]
	\displaystyle K_n=K_{\varrho_n},\quad \widetilde{K}_n=K_{\tilde{\varrho}_n},\\[5pt] 
	\displaystyle Q_n=Q_{\rho_n}(\theta),\quad
	\widetilde{Q}_n=Q_{\tilde\rho_n}(\theta).
	\end{array}
	\right.
\end{align}
Recall that $Q_{\rho_n}(\theta)=K_n\times(-\theta\varrho_n^p,0]$ and $Q_{\tilde\rho_n}(\theta)=\widetilde{K}_n\times(-\theta\tilde{\varrho}_n^p,0]$. 
Introduce the cutoff function $0\le\z\le 1$ vanishing on the parabolic boundary of $Q_{n}$ and
equal to identity in $\widetilde{Q}_{n}$, such that
\begin{equation*}
	|D\z|\le\boldsymbol\gm\frac{2^n}{\varrho}
	\quad\text{and}\quad 
	|\z_t|\le\boldsymbol\gm\frac{2^{pn}}{\theta\varrho^p}.
\end{equation*}
In this setting, the energy estimate may be written as
\begin{align*}
	&\essup_{-\theta\tilde{\varrho}_n^p<t<0}
	\int_{\widetilde{K}_n} \big(|u|+|k_n|\big)^{p-2}(u-\tilde{k}_n)_-^2\,\dx
	+
	\iint_{\widetilde{Q}_n}|D(u-\tilde{k}_n)_-|^p \,\dx\dt\\
	&\qquad\le
	\boldsymbol\gm\frac{2^{pn}}{\rho^p}
	\iint_{Q_n}(u-k_n)^{p}_- \,\dx\dt
	+
	\boldsymbol\gm\frac{2^{pn}}{\theta\rho^p}
	\iint_{Q_n}\big(|u|+|k_n|\big)^{p-1} (u-k_n)_- \,\dx\dt \\
	&\qquad\le
	\boldsymbol\gm \frac{2^{pn}}{\varrho^p}M^{p}|A_n|,
\end{align*}
where $\boldsymbol\gm$ depends on the data and $\theta$. 
Here, we used $\boldsymbol\mu^-\le u\le k_n\le \boldsymbol\mu^-+M$ on $A_n$, 
where
\begin{equation*}
	A_n=\big\{u<k_n\big\}\cap Q_n.
\end{equation*}
On the other hand, we recall $|\boldsymbol \mu^-|\le 8M$, so that $u\le\tilde k_n$ implies $|u|+|k_n|\le 18 M$ and $|u|+|k_n|\ge k_n-u\ge k_n-\tilde k_n=2^{-(n+3)}M$. Inserting this above, we find that 
\begin{equation}\label{Eq:sample}
\begin{aligned}
	\frac{M^{p-2}}{2^{p(n+3)}} \essup_{-\theta\tilde{\varrho}_n^p<t<0}
	\int_{\widetilde{K}_n} (u-\tilde{k}_n)_-^2\,\dx
	+
	\iint_{\widetilde{Q}_n}|D(u-\tilde{k}_n)_-|^p \,\dx\dt
	\le
	\boldsymbol\gm \frac{2^{pn}}{\varrho^p}M^{p}|A_n|.
\end{aligned}
\end{equation}
Now setting $0\le\phi\le1$ to be a cutoff function which vanishes on the parabolic boundary of $\widetilde{Q}_n$
and equals the identity in $Q_{n+1}$, an application of the H\"older inequality  and the Sobolev imbedding
\cite[Chapter I, Proposition~3.1]{DB} gives that
\begin{align*}
	\frac{M}{2^{n+3}}
	|A_{n+1}|
	&\le 
	\iint_{\widetilde{Q}_n}\big(u-\tilde{k}_n\big)_-\phi\,\dx\dt\\
	&\le
	\bigg[\iint_{\widetilde{Q}_n}\big[\big(u-\tilde{k}_n\big)_-\phi\big]^{p\frac{N+2}{N}}
	\,\dx\dt\bigg]^{\frac{N}{p(N+2)}}|A_n|^{1-\frac{N}{p(N+2)}}\\
	&\le\boldsymbol\gm
	\bigg[\iint_{\tilde{Q}_n}\big|D\big[(u-\tilde{k}_n)_-\phi\big]\big|^p\,
	\dx\dt\bigg]^{\frac{N}{p(N+2)}}\\
	&\quad\ 
	\times\bigg[\essup_{-\theta\tilde{\varrho}_n^p<t<0}
	\int_{\widetilde{K}_n}\big(u-\tilde{k}_n\big)^{2}_-\,\dx\bigg]^{\frac{1}{N+2}}
	 |A_n|^{1-\frac{N}{p(N+2)}}\\
	&\le 
	\boldsymbol\gm 
	\bigg(\frac{2^{pn}}{\varrho^p}M^p\bigg)^{\frac{N}{p(N+2)}}
	\bigg(\frac{2^{p(2n+3)}}{\varrho^p}M^2\bigg)^{\frac{1}{N+2}}
	|A_n|^{1+\frac{1}{N+2}} \\
	&=
	\boldsymbol\gm
	\frac{2^{\frac{(2p+N)n}{N+2}}}{\rho^\frac{N+p}{N+2}}
	M |A_n|^{1+\frac{1}{N+2}}.
\end{align*}
In the second last line we used the above energy estimate.
In terms of $\boldsymbol  Y_n=|A_n|/|Q_n|$, this can be rewritten as
\begin{equation*}
	\boldsymbol  Y_{n+1}
	\le
	\boldsymbol\gm \boldsymbol b^n \boldsymbol  Y_n^{1+\frac{1}{N+2}},
\end{equation*}
for a constant $\boldsymbol\gm$ depending only on the data and with $\boldsymbol b\equiv 2^\frac{2(N+p+1)}{N+2}$.
Hence, by \cite[Chapter I, Lemma~4.1]{DB}, 
there exists
a positive constant $\nu$ depending only on the data, such that
$\boldsymbol  Y_n\to0$ if we require that $\boldsymbol  Y_o\le \nu$, which is the same as
assuming
\begin{equation*}
	|A_o|=\big|\big\{u<k_o\big\}\cap Q_o\big|
	= 
	\Big|\Big\{u<\boldsymbol\mu^-+M\Big\}\cap Q_\varrho(\theta)\Big|
	\le
	\nu\big| Q_\varrho(\theta)\big|.
\end{equation*}
Since $\boldsymbol Y_n\to 0$ in the limit $n\to\infty$ we have
\begin{equation*}
	\Big|\Big\{u<\boldsymbol\mu^-+\tfrac12 M \Big\}\cap Q_{\frac12 \varrho}(\theta)\Big|
	= 
	0.
\end{equation*}
This concludes the proof of the lemma.
\end{proof}

\subsection{Proof of the Expansion of Positivity}

We now have all the prerequisites at hand to prove the main result of this section.

\begin{proof}[Proof of Proposition~\ref{Prop:1:1}]
We only show the case of super-solutions, the other case of sub-solutions being similar.
Assume $(x_o,t_o)=(0,0)$.
By $\delta,\epsilon\in(0,1)$ and $\boldsymbol \gm>0$ we denote the corresponding constants from Lemma~\ref{Lm:3:1} and Lemma~\ref{Lm:3:2} depending on the data and $\alpha$ and by $\nu\in(0,1)$ we denote the constant from Lemma~\ref{Lm:3:3} applied with $\theta=\delta$. Then, $\nu$ depends on the data and $\alpha$. 
Next, we choose an integer $j_*$ in such a way that 
$$
	\frac{\boldsymbol\gm}{j_*^{\frac{p-1}p}}
	\le
	\nu.
$$ 
Then, $j_*$ depends only on the data and $\alpha$. We let $\xi=8$ if $1<p\le 2$ and $\xi=\epsilon 2^{-j_*}$ if $p>2$. In the following we may assume that $|\boldsymbol \mu^{-}|\le \xi M$, since otherwise there is nothing to prove. Applying in turn Lemma~\ref{Lm:3:1} and Lemma~\ref{Lm:3:2} we infer that
\begin{equation*}
	\bigg|\bigg\{
	u\le \boldsymbol \mu^-+\frac{\eps M}{2^{j_*}}\bigg\}\cap \widehat Q\bigg|
	\le
	\nu|\widehat Q|,\quad
\end{equation*}
where $\widehat Q=K_{4\varrho}\times\left(0,\dl\rho^p\right]$. 
Applying Lemma~\ref{Lm:3:3} with $M$ replaced by $\frac{\eps M}{2^{j_*}}$ yields 
\begin{equation*}
	u\ge \boldsymbol\mu^- + \frac{\eps M}{2^{j_*+1}}
	\quad
	\mbox{a.e.~in $K_{2\rho}\times\big(\delta(\tfrac12\rho)^p,\delta\rho^p\big]$.}
\end{equation*}
This proves the assertion of Proposition~\ref{Prop:1:1} for $\eta=\frac{\eps}{2^{j_*+1}}$ depending only on the data and $\alpha$. 
Let us point out in fact we have chosen $\xi=2\eta$ when $p>2$.
\end{proof}

\section{Proof of Theorem \ref{Thm:1:1} When $1<p<2$}\label{S:5}

\subsection{The Proof Begins}
Fix $(x_o,t_o)\in E_T$ and define
\begin{equation*}
	Q_o\df{=}K_{\varrho}(x_o)\times(t_o-\varrho^{p}, t_o]\Subset E_T.
\end{equation*}
We may assume that $(x_o,t_o)$ coincides with the origin. 
Set
\begin{equation*}
	\boldsymbol \mu^+=\essup_{Q_o}u,
	\quad
	\boldsymbol\mu^-=\essinf_{Q_o}u,
	\quad
	\boldsymbol\om=\boldsymbol\mu^+-\boldsymbol\mu^-.
\end{equation*}
Our proof unfolds along two main
cases, namely 
\begin{equation}\label{Eq:Hp-main}
\left\{
\begin{array}{c}
\mbox{when $u$ is near zero:  $\boldsymbol\mu^-\le\boldsymbol\om$ and 
$\boldsymbol\mu^+\ge-\boldsymbol\om$};\\[5pt]
\mbox{when $u$ is away from zero: $\boldsymbol\mu^->\boldsymbol\om$ or $\boldsymbol\mu^+<-\boldsymbol\om$.}
\end{array}\right.
\end{equation}
Note that \eqref{Eq:Hp-main}$_1$ is equivalent to the condition that $-2\boldsymbol\omega
\le \boldsymbol\mu^-\le\boldsymbol\mu^+\le 2\boldsymbol\omega$ and therefore  $|\boldsymbol\mu^\pm|\le2\boldsymbol\om$.
When this case  holds, we deal with it in Section~\ref{S:5:2}
via a simple application of Proposition~\ref{Prop:1:1},
thanks to the possibility to choose $\xi=8$. No intrinsic scaling whatsoever is used in Section~\ref{S:5:2},
though it seems unavoidable when we deal with the second case of \eqref{Eq:Hp-main}$_2$
in Section~\ref{S:5:3}
\subsection{Reduction of Oscillation Near Zero}\label{S:5:2}
In this section assume that the first case in \eqref{Eq:Hp-main} holds.
Observe that one of the following must be true: either 
\begin{equation}\label{Eq:5:1}
	\Big|\Big\{u\big(\cdot,-\tfrac12\rho^p\big)-\boldsymbol\mu^->\tfrac14 \boldsymbol\om\Big\}
	\cap 
	K_{\varrho}\Big|
	\ge
	\tfrac12 |K_{\varrho}|,
\end{equation}
or 
\begin{equation*}
	\Big|\Big\{\boldsymbol\mu^+ - u\big(\cdot,-\tfrac12\rho^p\big)>\tfrac14 \boldsymbol\om\Big\}
	\cap 
	K_{\varrho}\Big|
	\ge
	\tfrac12 |K_{\varrho}|,
\end{equation*}
Since both cases can be treated similarly, we restrict ourselves to the case \eqref{Eq:5:1}. 
As mentioned above, $|\boldsymbol\mu^\pm|\le2\boldsymbol\om$ always holds.
An application of Proposition \ref{Prop:1:1} (note also Remark \ref{Rem:1.1}) gives  $\eta\in(0,1)$
depending only on the data, such that
\begin{equation*}
	u\ge\boldsymbol\mu^-+\eta\boldsymbol\om
	\quad\mbox{a.e.~in $
	Q_1\df{=}K_{\varrho_1}\times(-\varrho_1^p,0]$, with $\varrho_1=\tfrac12 \varrho$.}
\end{equation*} 
This yields a reduction of oscillation, i.e.~we have
\[
	\essosc_{Q_1}u\le(1-\eta)\boldsymbol\om\df{=}\boldsymbol\om_1.
\]
Now we may proceed by induction. Suppose up to $i=1,2,\cdots j-1$, we have built
\begin{equation*}
\left\{
	\begin{array}{c}
	\dsty\varrho_i=\tfrac12\varrho_{i-1},
	\quad 
	\boldsymbol\om_i=(1-\eta)\boldsymbol\om_{i-1},
	\quad 
	Q_i=K_{\varrho_i}\times(-\varrho_i^p,0],\\[5pt]
	\dsty\boldsymbol\mu_i^+=\essup_{Q_i}u,
	\quad
	\boldsymbol\mu_i^-=\essinf_{Q_i}u,
	\quad
	\essosc_{Q_i}u\le\boldsymbol\om_i.
	\end{array}
\right.
\end{equation*}
For all the indices $i=1,2,\cdots j-1$, we alway assume the first case in \eqref{Eq:Hp-main} holds, i.e.,
$$
	\boldsymbol\mu_i^-\le\boldsymbol\om_i\quad
	\text{ and }\quad\boldsymbol\mu_i^+\ge-\boldsymbol\om_i.
$$
In this way the argument at the beginning can be repeated and we have for all $i=1,2,\cdots j$,
\[
	\essosc_{Q_i}u\le(1-\eta)\boldsymbol\om_{i-1}=\boldsymbol\om_i.
\]
Consequently, iterating the above recursive inequality we obtain for all $i=1,2,\cdots j$,
\begin{equation}\label{Eq:case1}
	\essosc_{Q_i}u\le(1-\eta)^i\boldsymbol\om
	=
	\boldsymbol\om\left(\frac{\rho_i}{\rho}\right)^{\be_o}
	\quad\text{ where }
	\beta_o=\frac{-\ln(1-\eta)}{\ln2}.
\end{equation}
\subsection{Reduction of Oscillation Away From Zero}\label{S:5:3}
In this section, let us suppose $j$ is the first index satisfying the second case in \eqref{Eq:Hp-main}, i.e.,
\[
	\mbox{either \quad$\boldsymbol\mu_j^-> \boldsymbol \om_j\;$\quad
	or\quad
	$\;\boldsymbol\mu_j^+<-\boldsymbol \om_j$.}
\]
Let us treat for instance $\boldsymbol\mu_j^->\boldsymbol\om_j$,
for the other case is analogous.
We observe that since $j$ is the first index for this to happen,
one should have $\boldsymbol\mu_{j-1}^-\le \boldsymbol \om_{j-1}$.
Moreover, one estimates
\[
	\boldsymbol\mu_{j}^-
	\le \boldsymbol\mu_{j-1}^- +\boldsymbol \om_{j-1}-\boldsymbol \om_{j}
	\le
	2\boldsymbol \om_{j-1}-\boldsymbol \om_{j}=\frac{1+\eta}{1-\eta}\boldsymbol \om_{j}.
\]
As a result, we have
\begin{equation}\label{Eq:5:4}
	\boldsymbol \om_{j}
	<
	\boldsymbol \mu_{j}^-\le\frac{1+\eta}{1-\eta}\boldsymbol \om_{j}.
\end{equation}
The condition \eqref{Eq:5:4} indicates that starting from $j$ the equation \eqref{Eq:1:1}
resembles the parabolic $p$-Laplacian type equation in $Q_j$. Therefore, the reduction of oscillation
hinges upon the possibility to treat the parabolic $p$-Laplacian type equation.
To render this technically, we drop the suffix $j$ from our notation temporarily for simplicity,
and introduce $v:=u/\boldsymbol\mu^-$ in $Q=K_{\rho}\times(-\rho^p,0]$.
It is straightforward to verify that $v$ satisfies
\begin{equation*}
	\pl_tv^{p-1}-\dvg\bar{\bl{A}}(x,t,v, Dv)=0\quad\text{ weakly in }Q,
\end{equation*}
where, for $ (x,t)\in Q$, $v\in\rr$ and $\z\in\rn$, we have defined 
$$
	\bar{\bl{A}}(x,t,v, \z)
	=
	(\boldsymbol\mu^-)^{1-p}\bl{A}(x,t,\boldsymbol\mu^-v, \boldsymbol\mu^-\z),
$$
which is subject to the structure conditions
\begin{equation*}
	\left\{
	\begin{array}{c}
		\bar{\bl{A}}(x,t,v,\z)\cdot \z\ge C_o|\z|^p \\[5pt]
		|\bar{\bl{A}}(x,t,v,\z)|\le C_1|\z|^{p-1}
	\end{array}
	\right .
	\qquad \mbox{ for a.e.~$(x,t)\in Q$, $\forall v\in\rr$, $\forall\z\in\rn$.}
\end{equation*}
Moreover,
\begin{equation}\label{Eq:5:5}
1\le v\le2\quad\text{ a.e. in }Q.
\end{equation}
In order to use the known regularity theory for the parabolic $p$-Laplacian 
(see \cite{DB, DBGV-mono} for an account of the theory),
it turns out to be more convenient to consider the equation satisfied by $w:=v^{p-1}$,
i.e.
\begin{equation}\label{Eq:5:6}
\partial_tw-\dvg\widetilde{\bl{A}}(x,t,w, Dw)=0\quad\text{ weakly in }Q,
\end{equation}
where for $ (x,t)\in Q$, $y\in\rr$ and $\zeta\in\rn$, we have defined 
\begin{equation*}\label{def:tilde-A}
	\widetilde{\bl{A}}(x,t,y, \zeta)
	=
	\bar{\bl{A}}\Big( x,t,\widetilde y^{\frac{1}{p-1}},\tfrac{1}{p-1} \widetilde y^{\frac{2-p}{p-1}}\zeta\Big).
\end{equation*}
In the last line we used the abbreviation
\[
	\widetilde y\df{=} \min\big\{ \max\big\{ y,\tfrac12\big\}, 2^p\big\}.
\]
It is easy to see that $w$ belongs to the same kind of functional space \eqref{Eq:1:3p}
as $u$ and $v$ due to \eqref{Eq:5:5} which yields $1\le w\le 2^{p-1}$ in $Q$.
Employing \eqref{Eq:5:5} again
one can verify that there exist
absolute positive constants $\widetilde{C}_o=\boldsymbol\gamma_o (p)C_o$ and $\widetilde{C}_1=\boldsymbol\gamma_1 (p)C_1$, such that
\begin{equation}\label{Eq:5:7}
		\widetilde{\bl{A}}(x,t,y,\zeta)\cdot \zeta\ge \widetilde{C}_o|\zeta|^p
		\quad\mbox{and}\quad
		|\widetilde{\bl{A}}(x,t,y,\zeta)|\le \widetilde{C}_1|\zeta|^{p-1},
\end{equation}
 for a.e.~$(x,t)\in Q$, any  $y\in\rr$, and any $\zeta\in\rn$. In other words, $w$ is a local weak solution to the parabolic $p$-Laplacian type equation.
First proved in \cite{DB86} for $p>2$ and then in \cite{ChenDB92} for $1<p<2$, 
the power-like oscillation decay for solutions to this kind of
degenerate or singular parabolic equation is well known by now.  
The proofs in \cite{ChenDB92, DB86} exploit the idea of intrinsic scaling.
We state the conclusion in the following proposition in a form that favors our application,
and refer to the monograph \cite{DB} for a comprehensive treatment of this issue.
\begin{proposition}\label{Prop:5:1}
Let $p>1$.
Suppose $w$ is a bounded, local, weak solution to 
\eqref{Eq:5:6} -- \eqref{Eq:5:7} in $Q:=Q_\rho$
and define
$$
	\boldsymbol{\widetilde{\om}}=\essosc_Q w.
$$
If for some constant $\sig$ in $(0,1)$, there holds
\begin{equation}\label{Eq:5:8}
	\essosc_{Q_{\sig\varrho}(\theta)}w\le\boldsymbol{\widetilde{\om}}
	\quad
	\text{ where }
	\theta=\boldsymbol{\widetilde{\om}}^{2-p},
\end{equation}
then, there exist constants $\beta_1$ in $(0,1)$ and $\boldsymbol\gm>1$ depending only on the data
$N,p,\widetilde C_o, \widetilde C_1$ and $\sig$, such that for all $0<r<\rho$, we have
\[
	\essosc_{Q_r(\theta)}w\le\boldsymbol\gm\boldsymbol{\widetilde{\om}}\left(\frac{r}{\rho}\right)^{\be_1}.
\]
\end{proposition}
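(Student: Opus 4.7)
The plan is to reduce Proposition~\ref{Prop:5:1} to the classical intrinsic-scaling oscillation decay for parabolic $p$-Laplacian type equations developed in \cite{DB86,ChenDB92} and systematized in \cite{DB}. Since $w$ solves a standard equation with linear time derivative under the structural bounds \eqref{Eq:5:7}, the energy estimates, DeGiorgi iteration, and expansion-of-positivity tools that underpin that theory apply verbatim; no new difficulty is introduced by $\widetilde{\bl A}$ beyond checking the constants. The role of hypothesis \eqref{Eq:5:8} is precisely to pin down a single intrinsic cylinder $Q_{\sig\rho}(\theta)$ with $\theta=\boldsymbol{\widetilde\om}^{2-p}$ inside which the oscillation does not exceed $\boldsymbol{\widetilde\om}$, providing the initial data from which the iteration can be launched.

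The core step is a one-step decay lemma: there exist $\lm\in(0,\sig)$ and $\eta_0\in(0,1)$, depending only on $N,p,\widetilde C_o,\widetilde C_1$ and $\sig$, such that
\[
\essosc_{Q_{\lm\rho}(\theta)} w \le \eta_0\,\boldsymbol{\widetilde\om}.
\]
I would prove this by the two-alternatives dichotomy inside $Q_{\sig\rho}(\theta)$. Writing $\boldsymbol\mu^\pm$ for the essential sup/inf of $w$ on $Q_{\sig\rho}(\theta)$, either the level set $\{w\le \boldsymbol\mu^-+\tfrac12\boldsymbol{\widetilde\om}\}$ has small density on some top time slice of the intrinsic cylinder, so that a DeGiorgi-type lemma (analogous to Lemma~\ref{Lm:3:3}, but considerably simpler since the time derivative is now linear) yields the pointwise bound $w\ge \boldsymbol\mu^-+\eta_0\boldsymbol{\widetilde\om}$ on $Q_{\lm\rho}(\theta)$; or this density fails, in which case the complementary set has positive density and the expansion of positivity for the $p$-Laplacian propagates it in both space and time to give $w\le \boldsymbol\mu^+-\eta_0\boldsymbol{\widetilde\om}$ on the same cylinder. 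The intrinsic choice $\theta=\boldsymbol{\widetilde\om}^{2-p}$ is precisely what makes both alternatives balance.

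To finish, I would iterate. Setting $r_n=\lm^n\sig\rho$ and $Q_n=Q_{r_n}(\theta)$, repeated application of the one-step decay yields $\essosc_{Q_n} w\le \eta_0^n\boldsymbol{\widetilde\om}$; the key technical convenience is that the same $\theta$ is retained at every step, so no re-tuning of the intrinsic geometry is required between iterations. An interpolation in $n$ then yields the asserted estimate with $\be_1=\log\eta_0^{-1}/\log\lm^{-1}$.

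The main obstacle lies in the compatibility of the fixed parameter $\theta=\boldsymbol{\widetilde\om}^{2-p}$ with the intrinsic geometry naturally attached to the current oscillation $\eta_0^n\boldsymbol{\widetilde\om}$. For $p>2$ our fixed cylinder is shorter in time than the corresponding intrinsic one at scale $r_n$, which is harmless for DeGiorgi iteration; for $1<p<2$ it is instead longer, and the one-step decay lemma must be formulated over carefully chosen sub-intervals of $Q_{r_n}(\theta)$ so that the measure-propagation step still occurs inside the correct intrinsic cylinder. This book-keeping is the technical heart of the $p$-Laplacian theory in \cite{ChenDB92} and \cite[Chap.~IV]{DB}, and applying those arguments carefully to the rescaled equation \eqref{Eq:5:6} is the only non-trivial work left.
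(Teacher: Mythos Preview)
The paper does not actually prove Proposition~\ref{Prop:5:1}; it is quoted as a known result, with the sentence ``We state the conclusion in the following proposition in a form that favors our application, and refer to the monograph \cite{DB} for a comprehensive treatment of this issue,'' and the attributions to \cite{DB86} ($p>2$) and \cite{ChenDB92} ($1<p<2$). Your proposal correctly identifies this and sketches the intrinsic-scaling machinery from those references, so at the level of ``what has to be done'' you and the paper agree.

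One point in your outline deserves correction. The sentence ``the same $\theta$ is retained at every step, so no re-tuning of the intrinsic geometry is required between iterations'' is not how the argument in \cite{DB} runs, and taken literally it does not work. The one-step decay is a statement about an \emph{intrinsic} cylinder, meaning one whose time length is tied to the \emph{current} oscillation bound; once the oscillation drops to $\eta_0\boldsymbol{\widetilde\om}$, the next application needs $\theta_1=(\eta_0\boldsymbol{\widetilde\om})^{2-p}\neq\theta$. Knowing only $\essosc_{Q_{r_1}(\theta)}w\le\eta_0\boldsymbol{\widetilde\om}$ with the old $\theta$ is, for $p>2$, information on a cylinder strictly \emph{shorter} than the intrinsic one required for the next DeGiorgi step, so it is not ``harmless'' as you suggest. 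The standard route builds sequences $\boldsymbol\om_n$, $r_n$, $\theta_n=\boldsymbol\om_n^{2-p}$, verifies the nesting $Q_{r_{n+1}}(\theta_{n+1})\subset Q_{r_n}(\theta_n)$ by choosing the shrinking factor for $r_n$ to compensate for the change in $\theta_n$, and only at the very end recovers the fixed-$\theta$ statement via a set inclusion between $Q_r(\theta)$ and the appropriate $Q_{r_n}(\theta_n)$. You do flag this difficulty in your final paragraph, but the preceding paragraph presents the iteration as simpler than it is; the ``book-keeping'' you defer there is in fact the whole iteration, not a detail on top of it.
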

\begin{remark}\upshape
This proposition has been stated for all $p>1$.
However the proofs in \cite{DB86} for $p>2$ and in \cite{ChenDB92} for $1<p<2$
are remarkably different. We mention a recent attempt in \cite{Liao} to find a unified approach.
\end{remark}
To use this proposition properly when $1<p<2$, we first check the condition \eqref{Eq:5:8}
is satisfied. Indeed, recalling $v=u/\boldsymbol\mu^-$, $w=v^{p-1}$ and $\boldsymbol\om=\essosc_Q u$, 
we first use \eqref{Eq:5:5} and the mean value theorem to obtain
\[
	(p-1)2^{p-2}\essosc_Qv\le\boldsymbol{\widetilde{\om}}=\essosc_Q w\le (p-1)\essosc_Qv.
\]
Since $\essosc_Qv=\boldsymbol\om/\boldsymbol\mu^-$, this amounts to
\[
	(p-1)2^{p-2}\frac{\boldsymbol\om}{\boldsymbol\mu^-}
	\le
	\boldsymbol{\widetilde{\om}}	
	\le
	(p-1)\frac{\boldsymbol{\om}}{\boldsymbol\mu^-}.
\]
Then by \eqref{Eq:5:4}, we have
\[
	c\df{=}\frac{1-\eta}{1+\eta}(p-1)2^{p-2}\le\boldsymbol{\widetilde{\om}}\le (p-1)\le1.
\]
Now since $\boldsymbol{\widetilde{\om}}\le1$, we have $Q_{\rho}(\theta)\subset Q_\rho$.
Thus  the condition \eqref{Eq:5:8} in Proposition~\ref{Prop:5:1} is fulfilled for $\sig=1$.
As a result, the conclusion of Proposition~\ref{Prop:5:1} is obtained.
Moreover, the above lower bound of $\boldsymbol{\widetilde{\om}}$ actually allows us to obtain the set inclusion
\[
	Q_{r}(\theta_o)\subset Q_r(\theta)\quad\text{ where }\theta_o=c^{2-p}.
\]
Using this set inclusion and rephrasing the oscillation decay of Proposition~\ref{Prop:5:1} in terms of $u$, we have
 for all $0<r<\rho$,
\[
	\essosc_{Q_r(\theta_o)}u
	\le
	\boldsymbol\gm\boldsymbol\om\left(\frac{r}{\rho}\right)^{\be_1}.
\]
Now we revert to using the suffix $j$. The above oscillation estimate reads:
for all $0<r<\rho_j$, we have
\begin{equation}\label{Eq:case2}
	\essosc_{Q_r(\theta_o)}u
	\le
	\boldsymbol\gm\boldsymbol\om_j\left(\frac{r}{\rho_j}\right)^{\be_1}.
\end{equation}
Combining \eqref{Eq:case1} and \eqref{Eq:case2},
we arrive at the desired conclusion, i.e., for all $0<r<\rho$, there holds
\[
	\essosc_{Q_r(\theta_o)}u
	\le
	\boldsymbol\gm\boldsymbol\om\left(\frac{r}{\rho}\right)^{\be}\quad\text{ where }\be=\min\{\beta_o,\beta_1\}.
\]
A proper rescaling gives the oscillation decay in Remark~\ref{Rmk:1:1} and finishes the proof of Theorem \ref{Thm:1:1}
in the case $1<p<2$.
\section{Proof of Theorem \ref{Thm:1:1} When $p>2$}\label{S:6}
\subsection{The Proof Begins}
Fix $(x_o,t_o)\in E_T$. Let $A\ge1$ to be determined later in terms of the data
and $\varrho>0$ be so small that 
\begin{equation*}
	Q_o\df{=}K_{\varrho}(x_o)\times\big(t_o-A\varrho^{p}, t_o\big]\Subset E_T.
\end{equation*}
We may assume that $(x_o,t_o)$ coincides with the origin. 
Set
\begin{equation*}
	\boldsymbol \mu^+=\essup_{Q_o}u,
	\quad
	\boldsymbol\mu^-=\essinf_{Q_o}u,
	\quad
	\boldsymbol\om=\boldsymbol\mu^+-\boldsymbol\mu^-.
\end{equation*}
Like when $1<p<2$, our proof unfolds along two main
cases, namely 
\begin{equation}\label{Eq:Hp-main1}
\left\{
\begin{array}{c}
	\mbox{when $u$ is near zero: $\boldsymbol\mu^-\le\xi\boldsymbol\om\;$
		 and $\;\boldsymbol\mu^+\ge-\xi\boldsymbol\om$};\\[5pt]
	\mbox{when $u$ is away from zero: $\boldsymbol\mu^->\xi\boldsymbol\om\;$
	or $\;\boldsymbol\mu^+<-\xi\boldsymbol\om$.}
\end{array}
\right.
\end{equation}
Strictly speaking, the above $\xi$ should be $\xi/8$
for $\xi$ chosen as in Proposition~\ref{Prop:1:1} (note also Remark~\ref{Rem:1.1}) 
depending on the data, $A$ and $\al=\frac12\nu$, whereas $\nu$ is the absolute constant
determined in Lemma~\ref{Lm:3:3} with $\theta=1$ there. 
It will be clear shortly from the proof where the various dependences come from.
Meanwhile, we will keep using $\xi$ to denote $\xi/8$ for ease of notation
bearing in mind the actual meaning of $\xi$,
and this substitution will not spoil our reasoning in the following.

When $p>2$, the number $\xi$ from Proposition~\ref{Prop:1:1} is in general a very small number.
This brings additional technical complication to reducing the oscillation in the first case of \eqref{Eq:Hp-main1}.
As we will see in Section~\ref{S:6:2} and Section~\ref{S:6:3},
the method of intrinsic scaling is employed.
Whereas in Section~\ref{S:4:4} where we deal the second case of \eqref{Eq:Hp-main1},
the treatment more or less parallels Section~\ref{S:5:3} for $1<p<2$.
\subsection{Reduction of Oscillation Near Zero--Part I}\label{S:6:2}
In this section we assume the first case of \eqref{Eq:Hp-main1} holds.
We work with $u$ as a super-solution near its
infimum. 

Suppose that for some $\bar{t}\in\big(-(A-1)\rho^p,0\big]$,
\begin{equation}\label{Eq:4:2}
	\Big|\Big\{u\le\boldsymbol\mu^-+\tfrac14 \boldsymbol\om\Big\}
	\cap 
	(0,\bar{t})+Q_{\varrho}\Big|\le \nu|Q_{\varrho}|,
\end{equation}
where $\nu$ is the absolute constant appearing in Lemma~\ref{Lm:3:3}.
Taking $M=\frac14\boldsymbol\om$, then
according to Lemma~\ref{Lm:3:3},
we have 
\[
	u\ge\boldsymbol \mu^-+\tfrac{1}8\boldsymbol\om
	\quad
	\mbox{a.e.~in $(0,\bar{t})+Q_{\frac12 \varrho}$,}
\]
since the other alternative, i.e., $|\boldsymbol\mu^-|\ge 2\boldsymbol \om$,
does not hold due to \eqref{Eq:Hp-main1}$_1$.
An application of Proposition \ref{Prop:1:1} (note also Remark \ref{Rem:1.1}) applied with $2^pA$ instead of $A$ gives  $\xi,\,\eta\in(0,1)$
depending only on the data and $A$, such that
either $|\boldsymbol\mu^-|>\xi\boldsymbol \om$ or
\begin{equation}\label{def-Q1}
	u\ge\boldsymbol\mu^-+\eta\boldsymbol\om
	\quad\mbox{a.e.~in $
	\widetilde{Q}_1\df{=}K_{\frac12\varrho}\times(-(\tfrac12 \varrho)^p,0]$.}
\end{equation}
If the above line holds, we immediately obtain a reduction of oscillation, i.e.~we have
\[
	\essosc_{\widetilde{Q}_1}u\le(1-\eta)\boldsymbol\om.
\]
The case $\boldsymbol\mu^->\xi\boldsymbol\om$ does not hold due to \eqref{Eq:Hp-main1}$_1$.
 Therefore it remains to deal with the case $\boldsymbol\mu^-<-\xi\boldsymbol\om$. 
Due to the restriction on $\boldsymbol\mu^+$ in \eqref{Eq:Hp-main1}$_1$, we must also have $\boldsymbol\mu^->-2\boldsymbol\om$.
Thus,  we proceed further with the assumptions
\begin{equation}\label{Eq:H1}
\left\{
	\begin{array}{c}
	-2\boldsymbol\om<\boldsymbol\mu^-<-\xi\boldsymbol\om,\\[5pt]
	\mbox{$\dsty u\big(\cdot, \bar{t}-(\tfrac12\rho)^p\big)\ge\boldsymbol\mu^-+\tfrac{1}8\boldsymbol\om\;\;$a.e.~in $K_{\frac12\varrho}$.}
\end{array}
\right.
\end{equation}
In the next lemma we establish that the pointwise information in \eqref{Eq:H1}$_2$
propagates to the top of the cylinder $Q_o$.
\begin{lemma}\label{Lm:6:1}
Suppose the hypothesis \eqref{Eq:H1} holds.
Then there exists a constant $\eta_1\in(0,1)$ depending on $\xi$, $A$ and the data, such that
\[
	u\ge\boldsymbol\mu^-+\eta_1\boldsymbol\om\quad\mbox{a.e. ~in 
	$K_{\frac14\varrho}\times\big(\bar{t}-(\tfrac12\rho)^p,0\big]$.}
\]
As a result, we have a reduction of oscillation 
\[
	\essosc_{\widehat{Q}_1}u\le(1-\eta_1)\boldsymbol\om\quad\mbox{where $\widehat{Q}_1
	=K_{\frac14\rho}\times\big(-(\tfrac12\rho)^p,0\big]$.}
\]
\end{lemma}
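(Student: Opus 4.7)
The plan is to convert the hypothesis \eqref{Eq:H1}$_2$ into vanishing ``initial data'' for a De~Giorgi iteration performed on a spatial shrinking of $K_{\frac12\varrho}$ down to $K_{\frac14\varrho}$, keeping the time interval $(\tau,0]$ fixed throughout, where $\tau := \bar t - (\tfrac12\varrho)^p$. Since $|\tau| \le A\varrho^p$, the cylinder $K_{\frac12\varrho}\times(\tau,0]$ lies in $Q_o$. The absence of an \emph{a~priori} smallness assumption on the bad set will be compensated by placing the target level $\eta_1\boldsymbol\om$ very close to $\boldsymbol\mu^-$; crucially, the first half of \eqref{Eq:H1} forces $|u|+|k_n|$ to stay \emph{uniformly} bounded below by $\sim\xi\boldsymbol\om$ on every bad set, in sharp contrast to the $n$-dependent lower bound used in Lemma~\ref{Lm:3:3}.

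Concretely, I introduce nested cylinders $Q_n := K_{\varrho_n}\times(\tau,0]$ with $\varrho_n := \tfrac14\varrho + \varrho\,2^{-n-2}$, and levels $k_n := \boldsymbol\mu^- + \eta_1\boldsymbol\om(1+2^{-n})$, for a parameter $\eta_1\in(0,\tfrac14\xi)$ to be fixed at the end. Since $2\eta_1<\tfrac18$, \eqref{Eq:H1}$_2$ gives $u(\cdot,\tau)\ge\boldsymbol\mu^-+\tfrac18\boldsymbol\om\ge k_n$ a.e.\ in $K_{\frac12\varrho}$, so the initial-trace integral in Proposition~\ref{Prop:2:1} vanishes. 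On each bad set $A_n := \{u<k_n\}\cap Q_n$ we have $u,k_n<0$ and $|u|+|k_n|= -(u+k_n)\ge 2(|\boldsymbol\mu^-|-2\eta_1\boldsymbol\om)\ge\tfrac12\xi\boldsymbol\om$; Lemma~\ref{lem:g} then gives $\mathfrak g_-(u,k_n)\ge \boldsymbol\gm^{-1}(\xi\boldsymbol\om)^{p-2}(u-k_n)_-^2$ with a constant \emph{independent of $n$}.

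Apply Proposition~\ref{Prop:2:1} with a time-independent spatial cut-off $\zeta(x)$ supported in $K_{\varrho_n}$, equal to $1$ on $K_{\varrho_{n+1}}$, with $|D\zeta|\le\boldsymbol\gm 2^n/\varrho$. The $|\partial_t\zeta^p|$ and initial-trace terms being both zero, one obtains, after passing from $\mathfrak g_-$ to $(u-k_n)_-^2$ via the previous bound,
$$
\essup_{\tau<t<0}\int\zeta^p(u-k_n)_-^2\,\dx + \iint_{Q_n}\zeta^p|D(u-k_n)_-|^p\,\dx\dt \le \boldsymbol\gm(\xi)\,\frac{2^{pn}}{\varrho^p}\,\eta_1^p\boldsymbol\om^{2}\,|A_n|.
$$
Now one runs the Sobolev--H\"older chain from the proof of Lemma~\ref{Lm:3:3}, starting from $\tfrac{\eta_1\boldsymbol\om}{2^{n+2}}|A_{n+1}|\le\iint_{Q_n}(u-\tilde k_n)_-\phi\,\dx\dt$ for $\tilde k_n := (k_n+k_{n+1})/2$ and a cut-off $\phi$, and invokes the parabolic Sobolev inequality of \cite[Chap.~I, Prop.~3.1]{DB}. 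A careful bookkeeping of the $\eta_1$-powers (one $\eta_1$ on the left of the chain from the level gap $k_n-k_{n+1}\sim\eta_1\boldsymbol\om$, versus $p$ on the right from the energy estimate) yields
$$
Y_{n+1} \le \boldsymbol\gm(\xi,A)\,\boldsymbol b^{n}\,\eta_1^{(p-2)/(N+2)}\,Y_n^{1+\frac{1}{N+2}},\qquad Y_n := \frac{|A_n|}{|Q_n|},
$$
with $\boldsymbol b>1$ absolute and the dependence on $|\tau|\le A\varrho^p$ absorbed into $\boldsymbol\gm(\xi,A)$.

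By \cite[Chap.~I, Lemma~4.1]{DB}, $Y_n\to 0$ provided $Y_0$ is below a threshold proportional to $\eta_1^{-(p-2)}$, which, since $Y_0\le 1$ trivially and $p>2$, can be enforced by choosing $\eta_1=\eta_1(\xi,A,\text{data})\in(0,\tfrac14\xi)$ sufficiently small. The conclusion $u\ge\boldsymbol\mu^- + \eta_1\boldsymbol\om$ a.e.\ in $K_{\frac14\varrho}\times(\tau,0]$ then follows, and, since $(-\tfrac12\varrho)^p\le\tau$, the asserted reduction of oscillation on $\widehat Q_1 = K_{\frac14\varrho}\times(-(\tfrac12\varrho)^p,0]$ is obtained with $\eta_1$ in place of $\eta$. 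The main obstacle is precisely closing the iteration without the usual smallness hypothesis on $|A_0|/|Q_0|$: this is made possible by the net factor $\eta_1^{(p-2)/(N+2)}$, whose appearance hinges on the $n$-independent lower bound $|u|+|k_n|\ge\tfrac12\xi\boldsymbol\om$ guaranteed by \eqref{Eq:H1}$_1$, which avoids the spurious $2^{pn}$-blow-up that appears in Lemma~\ref{Lm:3:3}.
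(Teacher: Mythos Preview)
Your proposal is correct and follows essentially the same strategy as the paper: you kill both the initial-trace and $|\partial_t\zeta^p|$ terms in Proposition~\ref{Prop:2:1} by using a time-independent cutoff together with \eqref{Eq:H1}$_2$, you exploit \eqref{Eq:H1}$_1$ to get an $n$-independent lower bound $|u|+|k_n|\gtrsim\xi\boldsymbol\om$ (hence $\mathfrak g_-(u,k_n)\gtrsim(\xi\boldsymbol\om)^{p-2}(u-k_n)_-^2$), and you then run a De~Giorgi iteration that closes without any smallness on $Y_0$ because the net power $\eta_1^{(p-2)/(N+2)}$ can be made as small as needed. The paper does exactly this, the only cosmetic differences being that it also shrinks the time interval along the iteration (unnecessary, as you implicitly observe), and that it uses the $p$-th-power Sobolev chain (exponent $1+\tfrac{p}{N+2}$) rather than the first-power chain of Lemma~\ref{Lm:3:3} (exponent $1+\tfrac{1}{N+2}$) that you invoke; either closes. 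One small slip: in your displayed energy estimate the right-hand side $\eta_1^p\boldsymbol\om^2$ is correct for the sup-term after dividing by $(\xi\boldsymbol\om)^{p-2}$, but the gradient term still carries $\eta_1^p\boldsymbol\om^p$; since you treat the two terms separately in the Sobolev step anyway, this does not affect the bookkeeping.
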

\begin{proof} For ease of notation we set $\bar{t}-(\tfrac12\rho)^p=0$.
Define $k_n$, $\tilde{k}_n$, $\varrho_n$, $\tilde{\varrho}_n$, $K_n$ and $\widetilde{K}_n$, according to  \eqref{choices:k_n}
(cf.~the proof of Lemma~\ref{Lm:3:3}) with $M$ and $\varrho$ 
replaced by $2\eta_1\boldsymbol\om$ and $\frac12\varrho$ respectively, for some $0<\eta_1<\frac18\xi$ and $\theta>0$ to be determined later. The only difference is that now the cylinders 
$Q_n$ and $\widetilde{Q}_n$ are of forward type whose vertices are attached to the origin,
i.e.,~$Q_n=K_n\times(0,\theta\tilde{\rho}_n^p]$ and $\widetilde{Q}_n=\widetilde{K}_n\times(0,\theta\tilde{\rho}_n^p]$.
Since we know the ``initial datum'' at $t=0$ as in \eqref{Eq:H1}$_2$, we may choose
a cutoff function $\z$ in $K_n$ independent of $t$, such that
it equals $1$ on $\widetilde{K}_n$ and vanishes on $\pl K_n$,
satisfying $|D\z|\le\boldsymbol\gm2^{n}\rho^{-1}$. Note that the boundary term at $t=0$ on the right-hand side of the energy 
inequality vanishes on $K_n$, since
\begin{equation*}
	u(\cdot, 0) 
	\ge 
	\boldsymbol \mu^-+\tfrac18\boldsymbol\om
	\ge 
	\boldsymbol \mu^-+2\eta_1\boldsymbol\om
	\ge 
	\boldsymbol \mu^-+\eta_1\boldsymbol\om+\eta_1\frac{\boldsymbol\om}{2^n}
	=
	k_n
\end{equation*}
a.e.~on $K_{\frac12\varrho}$.
This requires $2\eta_1<\tfrac18$. 
In this way, the terms
on the right-hand side of the energy  estimate  in Proposition \ref{Prop:2:1} involving the initial time and $\z_t$ vanish.
Thus, using the condition $-2\boldsymbol\om<\boldsymbol\mu^-<-\xi\boldsymbol\om$, which leads to a lower bound for the sup-term in the energy estimate, we obtain
\begin{align}\label{Eq:sample*}
	\boldsymbol \om^{p-2}\essup_{0<t<\theta\varrho^p} 
	&
	\int_{\widetilde{K}_n}\big(u-\tilde{k}_n\big)^2_-\,\dx
	+
	\iint_{\widetilde{Q}_n}\big|D(u-\tilde{k}_n)_-\big|^p\,\dx\dt
	\le
	\boldsymbol\gm\frac{2^{pn}}{\varrho^p}(\eta_1\boldsymbol\om)^{p}|A_n|,
\end{align}
where 
\[
	A_n=\big\{u<k_n\big\}\cap Q_n.
\]
Now setting $\z$ to be a cutoff function which vanishes on the parabolic boundary of $\widetilde{Q}_n$
and equals identity in $Q_{n+1}$, an application of the Sobolev imbedding
\cite[Chapter I, Proposition~3.1]{DB} with $q=p\frac{N+2}{N}$ and $m=2$ gives that
\begin{align*}
	\bigg(\frac{\eta_1\boldsymbol\om}{2^{n+2}}\bigg)^p|A_{n+1}|
	&\le 
	\iint_{\widetilde{Q}_n}\big[(u-\tilde{k}_n)_-^p\z\big]^p\,\dx\dt\\
	&\le
	\bigg[\iint_{\widetilde{Q}_n}[(u-\tilde{k}_n)_-\z]^{p\frac{N+2}{N}}\,\dx\dt\bigg]^{\frac{N}{N+2}}
	|A_n|^{\frac{2}{N+2}}\\
	&\le
	\boldsymbol\gm\bigg[\iint_{\widetilde{Q}_n}\big|D\big[(u-\tilde{k}_n)_-\z\big]\big|^p
	\,\dx\dt\bigg]^{\frac{N}{N+2}}\\
	&\quad
	\times\bigg[\essup_{0<t<\theta\varrho^p}\int_{\widetilde{K}_n}\big(u-\tilde{k}_n\big)^2_-\,\dx\bigg]^{\frac{p}{N+2}}
 	|A_n|^{\frac{2}{N+2}}\\
	&\le 
	\boldsymbol\gm\boldsymbol\om^{\frac{p(2-p)}{N+2}}
	\bigg(\frac{2^{pn}}{\varrho^p}(\eta_1\boldsymbol\om)^{p}\bigg)^{\frac{N+p}{N+2}}
	|A_n|^{1+\frac{p}{N+2}}.
\end{align*}
Setting $\boldsymbol Y_n=|A_n|/|Q_n|$, we arrive at
\begin{equation*}
	\boldsymbol Y_{n+1}
	\le
	\boldsymbol\gm \Big( 2^{p(1+\frac{N+p}{N+2})}\Big)^n\Big(\eta_1^{p-2}\theta\Big)^\frac{p}{N+2}
	\boldsymbol Y_n^{1+\frac{p}{N+2}}
	\equiv \boldsymbol \gm \boldsymbol b^n\Big(\eta_1^{p-2}\theta\Big)^\frac{p}{N+2}
	\boldsymbol Y_n^{1+\frac{p}{N+2}}.
\end{equation*}
The meaning of  $\boldsymbol b$ is clear in this context. Note that
$\boldsymbol b$ and $\boldsymbol \gm$ depend only on the data.
Hence by \cite[Chapter I, Lemma~4.1]{DB}, there exists a constant $\nu_o\in(0,1)$
depending only on the data, such that $\boldsymbol Y_n\to0$ in the limit $n\to\infty$ if we require that
\begin{equation*}
	\boldsymbol Y_o\le \nu_o\frac{\eta_1^{2-p}}{\theta}.
\end{equation*}
To finish the proof, we fix $\theta= 2^p A$ 
and choose $\eta_1$ so small that $\nu_o\frac{\eta_1^{2-p}}{\theta}\ge 1$. 
The latter is implied if $\eta_1^{p-2}<\frac{\nu_o}{2^p A}$.
Together with the former bound for $\eta_1$ determined in the course of the proof, 
we have to require that
\begin{equation*}
	\eta_1<\min\bigg\{ \tfrac1{16}, \tfrac18\xi, \Big(\frac{\nu_o}{2^p A}\Big)^\frac1{p-2}\bigg\}.
\end{equation*}
This proves the asserted claim.
\end{proof}
\subsection{Reduction of Oscillation Near Zero--Part II}\label{S:6:3}
In this section we still assume the first case of \eqref{Eq:Hp-main1} holds.
However, now we work with $u$ as a sub-solution near its
supreme.

Suppose contrary to \eqref{Eq:4:2} that 
\begin{equation}\label{Eq:5:3:1}
	\Big|\Big\{u\le\boldsymbol\mu^-+\tfrac14 \boldsymbol\om\Big\}
	\cap 
	(0,\bar{t})+Q_{\varrho}\Big|> \nu|Q_{\varrho}|,\qquad\forall\, \bar{t}\in\big(-(A-1)\rho^p,0\big].
\end{equation}
Then for any such $\bar{t}$, there exists some $s\in\big[
	\bar{t}-\varrho^p,\bar{t}-\tfrac{1}2\nu\varrho^p\big]
$
with
\begin{equation*}
	\Big|\Big\{u(\cdot, s)\le\boldsymbol \mu^-+\tfrac{1}4\boldsymbol\om\Big\}\cap K_{\varrho}\Big|>\tfrac{1}2\nu |K_{\varrho}|.
\end{equation*}
Indeed, if the above inequality does not hold for any $s$ in the given 
interval, then 
\begin{align*}
	\Big|\Big\{u\le\boldsymbol \mu^-+\tfrac{1}4\boldsymbol\om\Big\}\cap (0,\bar t)+Q_{\varrho}\Big|
	&=
	\int_{\bar t-\varrho^p}^{\bar t-\frac12\nu\varrho^p}\Big|\Big\{u(\cdot, s)
	\le\boldsymbol \mu^-+\tfrac{1}4\boldsymbol\om\Big\}\cap K_{\varrho}\Big|\,\ds\\
	&\phantom{=\,}
	+\int^{\bar t}_{\bar t-\frac12\nu\varrho^p}\Big|\Big\{u(\cdot, s)\le\boldsymbol \mu^-+\tfrac{1}4\boldsymbol\om\Big\}\cap K_{\varrho}\Big|\,\ds\\
	&<\tfrac12\nu |K_{\varrho}|\big(\varrho^p-\tfrac12\nu\varrho^p\big) +\tfrac12\nu\varrho^p
	|K_\varrho |
	<\nu |Q_\varrho|,
\end{align*}
contradicting \eqref{Eq:5:3:1}.
Since $\boldsymbol\mu^+-\frac14\boldsymbol\om>\boldsymbol\mu^-+\frac14\boldsymbol\om$ always holds, this implies
\begin{equation*}
	\Big|\Big\{u(\cdot,s)\le\boldsymbol \mu^+-\tfrac{1}4\boldsymbol\om\Big\}\cap K_\varrho\Big|
	\ge\tfrac12\nu|K_{\varrho}|.
\end{equation*}
By Proposition \ref{Prop:1:1}, there exist $\xi,\eta_2\in(0,1)$,
such that either $|\boldsymbol\mu^+|>\xi\boldsymbol\om$ or
\begin{equation*}
	u\le \boldsymbol\mu^+-\eta_2\boldsymbol\om\quad\mbox{a.e.~in $\widetilde{Q}_1$,}
\end{equation*}
where $\widetilde{Q}_1$ is defined in \eqref{def-Q1}. 
This implies a reduction of oscillation
\begin{equation*}
	\essosc_{\widetilde{Q}_1}u\le(1-\eta_2)\boldsymbol\om.
\end{equation*}
The case $\boldsymbol\mu^+<-\xi\boldsymbol\om$ does not hold due to \eqref{Eq:Hp-main1}$_1$.
Next we handle the case when $\boldsymbol\mu^+>\xi\boldsymbol\om$.
Due to the restriction on $\boldsymbol\mu^-$ in  \eqref{Eq:Hp-main1}$_1$, we must have $\boldsymbol\mu^+\le 2\boldsymbol\om$. 
Thus our assumptions  for the following Sections \ref{S:4:3:1} -- \ref{S:4:3:3} are
\begin{equation}\label{Eq:4:3a}
	\xi\boldsymbol\om\le\boldsymbol\mu^+\le2\boldsymbol\om,
\end{equation}
and
\begin{equation}\label{Eq:4:3b}
	\left\{
	\begin{array}{c}
	\mbox{for any $\bar{t}\in\big(-(A-1)\varrho^p,0\big]$ there exists $s\in\big[\bar{t}-\varrho^p, \bar{t}
	-\tfrac12 \nu\varrho^p
	\big]$}\\[6pt]
	\mbox{such that   	$\dsty\Big|\Big\{u(\cdot,s)\le\boldsymbol \mu^+-\tfrac{1}4\boldsymbol\om\Big\}\cap K_\varrho\Big|
	\ge\tfrac12\nu|K_{\varrho}|. $  }
	\end{array}
	\right.
\end{equation}
\subsubsection{Propagation of Measure Theoretical Information}\label{S:4:3:1}
\begin{lemma}\label{Lm:4:2}
Suppose \eqref{Eq:4:3a} and  \eqref{Eq:4:3b} are in force.
 There exists $\eps\in(0,1)$,
depending only on $\nu$, $\xi$ and the data, such that
\begin{equation*}
	\Big|\Big\{ u(\cdot, t)\le\boldsymbol \mu^+-\eps\boldsymbol \om\Big\}\cap K_{\varrho}\Big|
	\ge
	\tfrac14\nu |K_\varrho|
	\quad\mbox{for all $t\in(s,\bar{t}\,]$.}
\end{equation*}
\end{lemma}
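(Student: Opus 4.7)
The plan is to adapt the strategy of Lemma \ref{Lm:3:1} to the sub-solution setting with the stronger hypothesis \eqref{Eq:4:3a}. I apply Proposition~\ref{Prop:2:1} to $u$ as a sub-solution on the cylinder $K_{\varrho}\times(s,t]$ for arbitrary $t\in(s,\bar t\,]$, with the level $k:=\boldsymbol\mu^+ - \tfrac14 \boldsymbol\om$ (the threshold for which the measure information at time $s$ is given in \eqref{Eq:4:3b}), and with a time-independent cutoff $\zeta(x)$ satisfying $\zeta\equiv 1$ on $K_{(1-\sigma)\varrho}$, $\zeta\equiv 0$ on $\partial K_\varrho$, and $|D\zeta|\le(\sigma\varrho)^{-1}$, where $\sigma\in(0,1)$ is to be chosen. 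Since $\zeta$ does not depend on $t$, the $|\partial_t\zeta^p|$ term disappears and only three terms remain.

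Next, I estimate the three terms. On the left-hand side, I restrict integration to $\{u(\cdot,t)>k_\epsilon\}\cap K_{(1-\sigma)\varrho}$ with $k_\epsilon:=\boldsymbol\mu^+ - \epsilon\boldsymbol\om$, $\epsilon\in(0,\tfrac14)$ to be chosen, and use the monotonicity $\mathfrak g_+(u,k)\ge \mathfrak g_+(k_\epsilon,k)$ for $u>k_\epsilon$. For the initial integral at time $s$, I exploit the hypothesis $|\{u(\cdot,s)>k\}\cap K_\varrho|\le(1-\tfrac\nu2)|K_\varrho|$ together with $\mathfrak g_+(u,k)\le\mathfrak g_+(\boldsymbol\mu^+,k)$ on $\{u>k\}$ (from $u\le \boldsymbol\mu^+$). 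For the $D\zeta$ term I use $(u-k)_+\le\tfrac14\boldsymbol\om$, $|D\zeta|^p\le(\sigma\varrho)^{-p}$, and $t-s\le\bar t-s\le\varrho^p$.

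Dividing through by $\mathfrak g_+(k_\epsilon,k)$ and employing the standard inclusion $|\,\cdot\,\cap K_\varrho|\le|\,\cdot\,\cap K_{(1-\sigma)\varrho}|+N\sigma|K_\varrho|$, I arrive at an estimate of the form
\[
|\{u(\cdot,t)>k_\epsilon\}\cap K_\varrho|
\le (1+J_\epsilon)(1-\tfrac\nu2)|K_\varrho|
+\tfrac{\boldsymbol\gm}{\sigma^p}|K_\varrho|
+N\sigma|K_\varrho|,
\]
where $J_\epsilon:=\mathfrak g_+(\boldsymbol\mu^+,k)/\mathfrak g_+(k_\epsilon,k)-1$. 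The key algebraic step, paralleling the treatment of $I_\epsilon$ in Lemma~\ref{Lm:3:1}, is to apply Lemma~\ref{lem:Acerbi-Fusco} to bound the integral $\int_{k_\epsilon}^{\boldsymbol\mu^+}|s|^{p-2}(s-k)\,ds$ linearly in $\epsilon\boldsymbol\om$, while Lemma~\ref{lem:g} yields a matching lower bound $\mathfrak g_+(k_\epsilon,k)\gtrsim\boldsymbol\om^p$; together this gives $J_\epsilon\le \boldsymbol\gm\epsilon$, with $\boldsymbol\gm$ depending on the data and $\xi$. One then fixes $\sigma=\nu/(16N)$, then $\epsilon$ so small that $(1+\boldsymbol\gm\epsilon)(1-\tfrac\nu2)\le 1-\tfrac{3\nu}{8}$, and concludes by passing to the complement.

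The main obstacle is the $D\zeta$ contribution: unlike Lemma~\ref{Lm:3:1}, where a small time parameter $\delta$ kills this term, here the slab length $\bar t-s$ may reach $\varrho^p$, so smallness of the $D\zeta$ term cannot be achieved by shrinking time. The point is that the hypothesis \eqref{Eq:4:3a}, $\xi\boldsymbol\om\le\boldsymbol\mu^+\le 2\boldsymbol\om$, makes the problem intrinsically of order $\theta\simeq1$ in the sense of Remark~\ref{Rmk:4:4}: this forces $(|u|+|k|)^{p-2}\sim\boldsymbol\om^{p-2}$ on the relevant region, so that via Lemma~\ref{lem:g} the quotient $(u-k)_+^p/\mathfrak g_+(k_\epsilon,k)$ is a pure constant depending on $p$ and $\xi$ only. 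It is this explicit lower bound on $\mathfrak g_+(k_\epsilon,k)$ that keeps the $\boldsymbol\gm/\sigma^p$ term under control and permits the full propagation up to $\bar t$.
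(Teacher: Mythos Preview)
Your argument has a genuine gap precisely at the point you flag as the ``main obstacle.'' After dividing by $\mathfrak g_+(k_\epsilon,k)$ you are left with the term $\frac{\boldsymbol\gm}{\sigma^p}|K_\varrho|$, and you propose to fix $\sigma=\nu/(16N)$. But $\boldsymbol\gm$ here is a structural constant (depending only on the data and $\xi$), so $\frac{\boldsymbol\gm}{\sigma^p}=\boldsymbol\gm(16N/\nu)^p$ is a \emph{large} fixed number, typically much bigger than $1$. The fact that $(u-k)_+^p/\mathfrak g_+(k_\epsilon,k)$ is ``a pure constant'' is exactly the problem, not the solution: you need this contribution to be smaller than $\tfrac{\nu}{16}$, and no parameter in your setup can achieve that. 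Neither $\epsilon$ nor $\sigma$ appears in this term in a way that lets you shrink it.

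The paper's proof resolves this by an intrinsic-scaling choice you did not make: it takes the level $k=\boldsymbol\mu^+-\varep\boldsymbol\om$ (not $\boldsymbol\mu^+-\tfrac14\boldsymbol\om$) with a \emph{small} parameter $\varep\le\tfrac12\xi$, and works on the cylinder $K_\varrho\times(0,\delta\varep^{2-p}\varrho^p]$ with a new free parameter $\delta$. Then $(u-k)_+\le\varep\boldsymbol\om$ and the time length is $\delta\varep^{2-p}\varrho^p$, so the $D\zeta$ contribution is $\frac{\boldsymbol\gm\delta}{\sigma^p}\varep^{2}\boldsymbol\om^p|K_\varrho|$; on the other hand, using \eqref{Eq:4:3a} one has $\mathfrak g_+(k_{\tilde\eps},k)\ge\boldsymbol\gm^{-1}\boldsymbol\om^{p-2}(\varep\boldsymbol\om)^2=\boldsymbol\gm^{-1}\varep^2\boldsymbol\om^p$. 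The factors $\varep^2\boldsymbol\om^p$ cancel in the quotient, leaving $\frac{\boldsymbol\gm\delta}{\sigma^p}$, which \emph{can} now be made $\le\tfrac{\nu}{16}$ by taking $\delta$ small. Finally one chooses $\varep$ so small that $\delta\varep^{2-p}\ge 1$ (possible since $p>2$), which guarantees that the cylinder covers the full interval $(s,\bar t\,]$ of length at most $\varrho^p$. This decoupling---introducing $\delta$ via the intrinsic time scale $\varep^{2-p}$---is the missing idea in your proposal.
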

\begin{proof} Assume $s=0$ for ease of notation.
Use the energy estimate in Proposition~\ref{Prop:2:1}
in the cylinder $Q:=K_{\varrho}\times(0,\dl\varep^{2-p}\varrho^p]$, with
$k=\boldsymbol\mu^+-\varep\boldsymbol \om$ for some $\dl>0$ and $0<\varep\le\frac12\xi$ to be determined later. Note that $k\ge\frac12\xi\boldsymbol\om$. 
Choose a standard non-negative time independent cutoff function 
$\zeta (x,t)\equiv\z(x)$  that equals $1$ on $K_{(1-\sig)\varrho}$ with $\sigma\in(0,1)$ 
and vanishes on $\pl K_{\varrho}$ satisfying
$|D\z|\le(\sig\varrho)^{-1}$;
in such a case, for all $0<t<\dl \varep^{2-p}\rho^p$ we have
\begin{align*}
	\int_{K_\varrho\times\{t\}}&\int_{k}^u s^{p-2}(s-k)_+\,\ds\z^p\,\dx\\
	&\le
	\int_{K_\varrho\times\{0\}}\int_{k}^u s^{p-2}(s-k)_+\,\ds\z^p\,\dx
	+
	\boldsymbol\gm\iint_{Q}(u-k)^{p}_+|D\z|^p\,\dx\dt.
\end{align*}
The first term on the right is bounded from above by taking \eqref{Eq:4:3b}
into consideration. Indeed we have
\begin{align*}
	\int_{K_\varrho\times\{0\}}\int_{k}^u s^{p-2}(s-k)_+\,\ds\z^p\,\dx
	\le
	\big(1-\tfrac12\nu\big)|K_{\rho}|
	\int^{\boldsymbol\mu^+}_k s^{p-2}(s-k)_+\,\ds.
\end{align*}
The second term on the right is estimated by
\begin{equation*}
	\iint_{Q}(u-k)^{p}_+|D\z|^p\,\dx\dt
	\le
	\frac{\boldsymbol \gm\dl}{\sig^p}\varep^{2-p}(\varep\boldsymbol\om)^p|K_\varrho|
	\le
	\frac{\boldsymbol \gm\dl}{\sig^p}\varep^{2}\boldsymbol\om^p|K_\varrho|.
\end{equation*}
For the left-hand side, we estimate from below by
\begin{align*}
		\int_{K_\varrho\times\{t\}}&\int_{k}^u s^{p-2}(s-k)_+\,\ds\z^p\,\dx
	\ge\big|\big\{ u(\cdot, t)>k_{\tilde\eps}\big\}\cap K_{(1-\sig)\varrho}\big|
	\int^{k_{\tilde\eps}}_k s^{p-2}(s-k)_+\,\ds,
\end{align*}
where $k_{\tilde\eps}=\boldsymbol \mu^+-\tilde\eps\varep\om$ for some $\tilde\eps\in(0,\frac12)$.
Noticing $\xi\boldsymbol\om\le\boldsymbol\mu^+\le2\boldsymbol\om$, we may estimate
\begin{equation*}
	\int^{k_{\tilde\eps}}_k s^{p-2}(s-k)_+\,\ds
	\ge
	\boldsymbol\gm\boldsymbol \om^{p-2}(\varep\boldsymbol\om)^2
	=
	\boldsymbol\gm\varep^2\boldsymbol \om^{p}.
\end{equation*}
A similar consideration as in Lemma \ref{Lm:3:1} then gives
\begin{align*}
	\big|\big\{ u(\cdot, t)>k_{\tilde\eps}\big\}\cap K_{(1-\sig)\varrho}\big|
	\le 
	\frac{\dsty\int^{\boldsymbol\mu^+}_k s^{p-2}(s-k)_+\,\ds}{\dsty\int^{k_{\tilde\eps}}_k s^{p-2}(s-k)_+\,\ds}
	\big(1-\tfrac12\nu\big)|K_{\rho}| +\frac{\boldsymbol\gm\dl}{\sig^p}|K_\rho|.
\end{align*}
The fractional number of integral on the right can be rewritten as
\begin{align*}
	1+I_\eps
	\quad\text{ where }\quad
	I_\eps
	=
	\frac{\dsty\int^{\boldsymbol\mu^+}_{k_{\tilde\eps}} s^{p-2}(s-k)_+\,\ds}
	{\dsty\int^{k_{\tilde\eps}}_{k} s^{p-2}(s-k)_+\,\ds}.
\end{align*}
We estimate by using $\xi\boldsymbol\om\le\boldsymbol\mu^+\le2\boldsymbol\om$ and $k\ge\frac12\xi\boldsymbol\om$ to obtain the bound
$I_\eps\le \boldsymbol\gm\tilde\eps$, where $\boldsymbol\gm$ depends only on $p$. Inserting this above leads to the
inequality
\begin{align*}
	\big|\big\{ u(\cdot, t)>k_{\tilde\eps}\big\}\cap K_{\varrho}\big|
	\le 
	\big(1-\tfrac12\nu\big)\big(1+\boldsymbol\gm\tilde\eps\big)
	|K_{\rho}|
	+
	\frac{\boldsymbol\gm\dl}{\sig^p}|K_\varrho|
	+
	N\sig |K_\varrho|.
\end{align*}
Now we first choose $\tilde\eps$ small enough so that
\begin{equation*}
	\big(1-\tfrac12\nu\big)\big(1+\boldsymbol\gm\tilde\eps\big)
	\le 
	1-\tfrac38\nu.
\end{equation*}
This fixes $\tilde\eps$ in dependence on $p$ and $\nu$. Then we fix $\sig:=\frac{\nu}{16N}$ 
and choose $\delta$ small enough to have $\frac{\boldsymbol\gm\dl}{\sig^p}\le\tfrac1{16} \nu$. Finally, the paramter
$\varep$ is chosen such that  $\dl\varep^{2-p}\ge1$.
The proof can now be finished by redefining $\tilde\eps\varep$ as $\eps$.
\end{proof}
Since $\bar{t}$ is arbitrary, we actually obtain the measure theoretical information
\begin{equation}\label{Eq:4:4}
	\Big|\Big\{ u(\cdot, t)\le\boldsymbol\mu^+-\eps\boldsymbol\om\Big\}
	\cap K_{\varrho}\Big|\ge\tfrac{1}4 \nu |K_\varrho|
\quad\mbox{ for all $t\in\big(-(A-1)\varrho^p,0\big]$.}
\end{equation}
\subsubsection{Shrinking the Measure Near the Supremum}\label{S:4:3:2}
By $\epsilon\in (0,1)$ we denote the constant from Lemma~\ref{Lm:4:2} depending only on the data. The number $A$ is still to be determined. We choose $A$ in the form $A=2^{j_*(p-2)}+1$
with some $j_*$ to be fixed later and define $Q_\varrho(\theta)=K_\varrho\times (-\theta\varrho^p,0]$
with $\theta=2^{j_*(p-2)}$. 
\begin{lemma}\label{Lm:A:4}
Suppose \eqref{Eq:4:3a} 
and \eqref{Eq:4:4} hold.
There exists $\boldsymbol \gm>0$ depending only on the data, such
that for any positive integer $j_*$, we have
\begin{equation*}
	\Big|\Big\{ u\ge\boldsymbol \mu^+-\frac{\eps\boldsymbol\om}{2^{j_*}}\Big\}\cap Q_\varrho(\theta)\Big|\le
	\frac{\boldsymbol\gm}{j_*^{\frac{p-1}p}}|Q_\varrho(\theta)|.
\end{equation*}
\end{lemma}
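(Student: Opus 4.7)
The plan is to adapt the De Giorgi--type shrinking argument of Lemma~\ref{Lm:3:2} to the supremum side, with the intrinsic time scale $\theta=2^{j_*(p-2)}$ replacing the universal $\delta$. First, I would apply the energy estimate of Proposition~\ref{Prop:2:1} (sub-solution case) in a cylinder $K_{2\varrho}\times(-\theta\varrho^p,0]$ containing $Q_\varrho(\theta)$, with the increasing family of levels
\[
k_j=\boldsymbol\mu^+-\frac{\eps\boldsymbol\om}{2^j},\qquad j=0,1,\dots,j_*,
\]
and a time--independent cutoff $\zeta$ equal to $1$ on $K_\varrho$, vanishing on $\partial K_{2\varrho}$, with $|D\zeta|\le\varrho^{-1}$. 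Using $(u-k_j)_+\le\eps\boldsymbol\om/2^j$ on the gradient term, and bounding the initial boundary term via Lemma~\ref{lem:g} together with the fact that \eqref{Eq:4:3a} forces $|u|+|k_j|\le\boldsymbol\gm\boldsymbol\om$ (so $\mathfrak g_+(u,k_j)\le\boldsymbol\gm\boldsymbol\om^{p-2}(\eps\boldsymbol\om/2^j)^2$), one arrives at an estimate of the form
\[
\iint_{Q_\varrho(\theta)}|D(u-k_j)_+|^p\,\dx\dt\le\frac{\boldsymbol\gm}{\varrho^p}\Big(\frac{\eps\boldsymbol\om}{2^j}\Big)^p|Q_\varrho(\theta)|,
\]
where the intrinsic choice $\theta=2^{j_*(p-2)}$ is exactly what is needed to absorb the initial-trace contribution into the $(u-k_j)_+^p$ term uniformly in $j\in\{0,\dots,j_*\}$.

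Next, I would apply \cite[Ch.~I, Lemma~2.2]{DB} slice-wise to $u(\cdot,t)$ on $K_\varrho$ with the pair of levels $k_j<k_{j+1}$: since the measure information \eqref{Eq:4:4} yields
\[
\big|\{u(\cdot,t)\le k_0\}\cap K_\varrho\big|\ge\tfrac14\nu|K_\varrho|\qquad\text{for all }t\in(-\theta\varrho^p,0],
\]
and $\{u\le k_0\}\subset\{u\le k_j\}$, the denominator in that lemma is controlled uniformly in $j$ and $t$. Using $k_{j+1}-k_j=\eps\boldsymbol\om/2^{j+1}$ and applying H\"older's inequality in $x$, then integrating in $t$ and applying H\"older again in time, one obtains with $A_j\df{=}\{u>k_j\}\cap Q_\varrho(\theta)$ the recursion
\[
\frac{\eps\boldsymbol\om}{2^{j+1}}|A_{j+1}|\le\boldsymbol\gm\frac{\varrho}{\nu}\bigg(\iint_{Q_\varrho(\theta)}|D(u-k_j)_+|^p\,\dx\dt\bigg)^{1/p}\big(|A_j|-|A_{j+1}|\big)^{1-1/p}.
\]

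Substituting the energy bound from the first step and simplifying gives
\[
|A_{j+1}|^{\frac{p}{p-1}}\le\boldsymbol\gm\,|Q_\varrho(\theta)|^{\frac{1}{p-1}}\big(|A_j|-|A_{j+1}|\big).
\]
Summing the telescoping right-hand side from $j=0$ to $j_*-1$, using that $A_{j_*}\subset A_j$ so the left-hand side dominates $j_*|A_{j_*}|^{p/(p-1)}$, and that $|A_0|\le|Q_\varrho(\theta)|$, yields $j_*|A_{j_*}|^{p/(p-1)}\le\boldsymbol\gm|Q_\varrho(\theta)|^{p/(p-1)}$, which is precisely the claim after taking the $(p-1)/p$-power.

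The main subtlety will be step one: unlike Lemma~\ref{Lm:3:2}, the time horizon $\theta\varrho^p$ is intrinsic and grows with $j_*$, so the initial-trace contribution must be handled delicately. The algebra works out because the coercivity \eqref{Eq:4:3a} forces $|u|+|k_j|\simeq\boldsymbol\om$, and with $p>2$ the exponent on $\boldsymbol\om$ in $\mathfrak g_+$ is higher than the $(u-k_j)_+$ factor alone would suggest, so the precise choice $\theta=2^{j_*(p-2)}$ gives cancellations that keep the bound on $\iint|D(u-k_j)_+|^p$ proportional to $(\eps\boldsymbol\om/2^j)^p|Q_\varrho(\theta)|/\varrho^p$ uniformly in $j\le j_*$ -- this is exactly what is needed for the De Giorgi iteration to telescope cleanly.
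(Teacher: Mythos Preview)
Your proposal is correct and follows essentially the same approach as the paper's own proof: the same energy estimate on $K_{2\varrho}\times(-\theta\varrho^p,0]$ with a time-independent cutoff, the same bound on the initial-trace term $\mathfrak g_+(u,k_j)\le\boldsymbol\gm\boldsymbol\om^{p-2}(\eps\boldsymbol\om/2^j)^2$ via Lemma~\ref{lem:g} and \eqref{Eq:4:3a}, the same absorption of that term using $\theta=2^{j_*(p-2)}$, and the same slice-wise De~Giorgi isoperimetric step based on \eqref{Eq:4:4}, followed by the telescoping sum. Your identification of the ``main subtlety''---that the growing intrinsic time horizon is exactly compensated by the choice of $\theta$ so that the initial-trace contribution matches the gradient term uniformly in $j\le j_*$---is precisely the point of the argument.
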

\begin{proof} For $j=0,\dots, j_\ast-1$ we employ the energy estimate in Proposition~\ref{Prop:2:1} in the cylinder $K_{2\varrho}\times (-\theta\varrho^p,0]$ 
with levels $k_j=\mu^+-2^{-j}\eps\boldsymbol\om$
and a time independent cutoff function $\z(x,t)\equiv\zeta (x)$, such that $\z$ equals $1$ in $K_{\varrho}$, vanishes
on $\pl K_{2\varrho}$, and such that $|D\z|\le2\varrho^{-1}$. 
Then, we obtain
\begin{align*}
	\iint_{Q_\varrho(\theta)}&|D(u-k_j)_+|^p\,\dx\dt\\
	&
	\le
	\int_{K_{2\varrho}\times\{ -\theta\varrho^p\}} \z^p \mathfrak g_+ (u,k_j) \,\dx
	+\boldsymbol\gm\iint_{K_{2\varrho}\times (-\theta\varrho^p,0]}(u-k_j)_+^p|D\z|^p\,\dx\dt.
\end{align*}
The first term on the right is estimated by Lemma~\ref{lem:g} and using $\xi\boldsymbol\om\le\mu^+\le2\boldsymbol\om$. We obtain
\begin{align*}
	\int_{K_{2\varrho}\times\{ -\theta\varrho^p\}}\z^p \mathfrak g_+ (u,k_j)\,\dx
	&\le
	\boldsymbol\gm\boldsymbol\om^{p-2}\left(\frac{\eps\boldsymbol\om}{2^j}\right)^2|K_{2\varrho}|\\
	&\le
	\frac{\boldsymbol \gm}{\varrho^p\eps^{p-2}}\left(\frac{\eps\boldsymbol\om}{2^j}\right)^p|Q_\rho(\theta)|\\
	&\le
	\frac{\boldsymbol \gm}{\varrho^p}\left(\frac{\eps\boldsymbol\om}{2^j}\right)^p|Q_\rho(\theta)|.
\end{align*}
In the second last line we used the fact that the paramter
$\eps$ is already fixed in dependence on the data.
Hence, the above energy estimate yields
\begin{equation*}
	\iint_{Q_\varrho(\theta)}|D(u-k_j)_+|^p\,\dx\dt
	\le
	\frac{\boldsymbol\gm}{\varrho^p}\left(\frac{\eps\boldsymbol\om}{2^j}\right)^p|Q_\varrho(\theta)|.
\end{equation*}
Next, we apply \cite[Chapter I, Lemma 2.2]{DB}
slicewise to $u(\cdot,t)$ for 
$t\in( -\theta\varrho^p,0]$ over the cube $K_\varrho$,
for levels $k_{j+1}>k_{j}$ and take into account the measure theoretical information from \eqref{Eq:4:4}, i.e.~that
\[
	\Big|\Big\{ u(\cdot, t)\le\boldsymbol\mu^+-\eps\boldsymbol\om\Big\}
	\cap K_{\varrho}\Big|\ge\tfrac{1}4 \nu |K_\varrho|
	\quad\mbox{ for all $t\in(-\theta \varrho^p,0]$.}
\]
This leads to
\begin{align*}
	(k_{j+1}-k_{j})&\big|\big\{u(\cdot, t)>k_{j+1} \big\}
	\cap K_{\varrho}\big|
	\\
	&\le
	\frac{\boldsymbol\gm \varrho^{N+1}}{\big|\big\{u(\cdot, t)<k_{j}\big\}\cap K_{\varrho}\big|}	
	\int_{\{ k_{j}<u(\cdot,t)<k_{j+1}\}\cap  K_{\varrho}}\!\!\!|Du(\cdot,t)|\,\dx\\
	&\le
	\frac{\boldsymbol \gm\varrho}{\nu}
	\bigg[\int_{\{k_{j}<u(\cdot,t)<k_{j+1}\}\cap K_{\varrho}}\!\!\!|Du(\cdot,t)|^p\,\dx\bigg]^{\frac1p}
	\big|\big\{ k_{j}<u(\cdot,t)<k_{j+1}\big\}\cap K_{\varrho}\big|^{1-\frac1p}
	\\
	&=
	\frac{\boldsymbol \gm\varrho}{\nu}
	\bigg[\int_{\{k_j<u(\cdot,t)<k_{j+1}\}\cap K_{\varrho}}\!\!\!|Du(\cdot,t)|^p\,\dx\bigg]^{\frac1p}
	\Big[ |A_j(t)|-|A_{j+1}(t)|\big]^{1-\frac1p}.
\end{align*}
In the last line we used the abbreviation $ A_j(t):= \big\{u(\cdot,t)<k_{j}\big\}\cap K_\varrho$.
We now integrate the preceding  inequality with respect to $t$ over  $(-\theta\varrho^p,0]$ and apply H\"older's inequality slice-wise.
Setting $A_j=\{u<k_j\}\cap Q_\varrho(\theta)$ this leads to the measure estimate
\begin{align*}
	\frac{\eps \boldsymbol\om}{2^{j+1}}\big|A_{j+1}\big|
	&\le
	\frac{\boldsymbol \gm\varrho}{\nu}\bigg[\iint_{Q_\varrho(\theta)}|D(u-k_j)_-|^p\,\dx\dt\bigg]^\frac1p
	\big[|A_j|-|A_{j+1}|\big]^{1-\frac{1}p}\\
	&\le\boldsymbol\gm \frac{\eps \boldsymbol\om}{2^j}|Q_\varrho(\theta)|^{\frac1p}
	\big[|A_j|-|A_{j+1}|\big]^{1-\frac{1}p}.
\end{align*}
Now take the power $\frac{p}{p-1}$ on both sides. This gives
\[
	\big|A_{j+1}\big|^{\frac{p}{p-1}}\le\boldsymbol \gm|Q_\varrho(\theta)|^{\frac1{p-1}}\big[|A_j|-|A_{j+1}|\big]
	.
\]
To finish the proof, we proceed exactly as in the proof of Lemma \ref{Lm:3:2}. We add the inequalities with respect to $j$ from $0$ to $j_*-1$ and obtain
\[
	j_* \big|A_{j_*}\big|^{\frac{p}{p-1}}\le\boldsymbol\gm\big|Q_\varrho(\theta)\big|^{\frac{p}{p-1}},
\]
from which we deduce the claim, i.e.~that
\[
	\big|A_{j_*}\big|\le\frac{\boldsymbol\gm}{j_*^{\frac{p-1}p}}|Q_\varrho(\theta)|.
\]
This completes the proof.
\end{proof}
\subsubsection{A DeGiorgi-type Lemma}\label{S:4:3:3}
As before, $\epsilon\in (0,1)$ denotes the constant from Lemma~\ref{Lm:4:2} depending only on the data.
\begin{lemma}\label{Lm:5:3}
Suppose that the assumptions \eqref{Eq:4:3a} and \eqref{Eq:4:3b} hold true.
Then, there exists a constant $\nu_1\in(0,1)$ depending only on 
the data, such that if for some $j_*>1$, the measure bound
\begin{equation*}
	\Big|\big\{\boldsymbol\mu^{+}-u\le\frac{\eps\boldsymbol\om}{2^{j_*}}\Big\} 
	\cap Q_{\varrho}(\theta)
	\Big|
	\le\nu_1|Q_{\varrho}(\theta)|,
\end{equation*}
holds true, where $\theta=2^{j_*(p-2)}$, then 
\[
	\boldsymbol\mu^{+}-u\ge\frac{\eps\boldsymbol\om}{2^{j_*+1}}\quad\mbox{a.e. in $Q_{\frac12\varrho}(\theta)$.}
\]
\end{lemma}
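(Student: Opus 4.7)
\medskip
\noindent\textbf{Proof plan.} The plan is to mirror the De Giorgi iteration of Lemma~\ref{Lm:3:3}, but now for the sub-solution $u$ truncated from above by $\boldsymbol\mu^+$ and on the intrinsically scaled cylinders $Q_{\rho_n}(\theta)$ with $\theta = 2^{j_*(p-2)}$. I would set $M := \eps\boldsymbol\om/2^{j_*}$ and define the increasing levels and shrinking radii
\[
k_n = \boldsymbol\mu^+ - \tfrac12 M - \tfrac{M}{2^{n+1}},\qquad \tilde k_n = \tfrac12(k_n+k_{n+1}),\qquad \rho_n = \tfrac{\rho}{2}+\tfrac{\rho}{2^{n+1}},\qquad \tilde\rho_n = \tfrac12(\rho_n+\rho_{n+1}),
\]
with cylinders $Q_n = Q_{\rho_n}(\theta)$, $\widetilde Q_n = Q_{\tilde\rho_n}(\theta)$ and bad sets $A_n := \{u > k_n\}\cap Q_n$. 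The hypothesis bounds $|A_0| \le \nu_1|Q_\rho(\theta)|$, and the conclusion $u \le \boldsymbol\mu^+ - M/2$ a.e.\ in $Q_{\rho/2}(\theta)$ will follow once I show $\boldsymbol Y_n := |A_n|/|Q_n| \to 0$.

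First I would apply Proposition~\ref{Prop:2:1} to $(u-\tilde k_n)_+$ with a standard cutoff $\z$ equal to $1$ on $\widetilde Q_n$ and vanishing on the parabolic boundary of $Q_n$, satisfying $|D\z|\le\boldsymbol\gm 2^n/\rho$ and $|\partial_t\z^p|\le\boldsymbol\gm 2^{pn}/(\theta\rho^p)$; since $\z$ also vanishes at $t=-\theta\rho_n^p$, no initial-slice contribution appears. The crucial structural input comes from \eqref{Eq:4:3a}: on $\{u > \tilde k_n\}$ we have $\tfrac12\xi\boldsymbol\om \le \tilde k_n \le u \le 2\boldsymbol\om$ as soon as $M \le \tfrac12\xi\boldsymbol\om$ (a threshold we absorb into the dependencies of $\nu_1$), so $|u|+|\tilde k_n|\simeq\boldsymbol\om$, and Lemma~\ref{lem:g} then yields $\mathfrak g_+(u,\tilde k_n)\simeq\boldsymbol\om^{p-2}(u-\tilde k_n)_+^2$. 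Using $(u-\tilde k_n)_+\le M$ on $A_n$, the energy estimate takes the form
\[
\boldsymbol\om^{p-2}\essup_{-\theta\tilde\rho_n^p<t<0}\int_{\widetilde K_n}(u-\tilde k_n)_+^2\,\dx + \iint_{\widetilde Q_n}|D(u-\tilde k_n)_+|^p\,\dx\dt \le \boldsymbol\gm\bigg[\frac{M^p\,2^{pn}}{\rho^p} + \frac{\boldsymbol\om^{p-2}M^2\,2^{pn}}{\theta\rho^p}\bigg]|A_n|.
\]
With $\theta=2^{j_*(p-2)}$ and $M=\eps\boldsymbol\om/2^{j_*}$ one checks that $\boldsymbol\om^{p-2}M^2/\theta=\eps^{2-p}M^p$, so both bracketed terms coalesce into $\boldsymbol\gm\,2^{pn}M^p|A_n|/\rho^p$ with $\boldsymbol\gm$ depending only on the data.

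Once the energy estimate is in this data-only form, the De Giorgi iteration is routine and strictly parallel to the final portion of Lemma~\ref{Lm:3:3}: insert a further cutoff $\phi$ supported in $\widetilde Q_n$ and equal to $1$ on $Q_{n+1}$, bound $(M/2^{n+3})|A_{n+1}|\le\iint_{\widetilde Q_n}(u-\tilde k_n)_+\phi\,\dx\dt$, then apply H\"older's inequality and the parabolic Sobolev embedding \cite[Chapter~I, Proposition~3.1]{DB} with exponents $q=p(N+2)/N$ and $m=2$, plugging in the sup-term and the gradient-term from the energy estimate. This produces the recursion $\boldsymbol Y_{n+1}\le\boldsymbol\gm\,\boldsymbol b^n\boldsymbol Y_n^{1+\frac{1}{N+2}}$ with data-only $\boldsymbol\gm$ and $\boldsymbol b$, and \cite[Chapter~I, Lemma~4.1]{DB} then furnishes a data-only $\nu_1\in(0,1)$ ensuring that $\boldsymbol Y_0\le\nu_1$ forces $\boldsymbol Y_n\to 0$, i.e.\ $|\{u>\boldsymbol\mu^+-M/2\}\cap Q_{\rho/2}(\theta)|=0$. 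The main obstacle, and the entire raison d'\^etre of the intrinsic time scaling, is precisely the cancellation $\boldsymbol\om^{p-2}M^2/\theta\simeq M^p$: any other choice of $\theta$ would leave one of the two terms in the energy estimate amplified by a power of $2^{j_*}$, breaking the $j_*$-independence of $\nu_1$ and hence the usability of this lemma in the outer argument of Section~\ref{S:4:3:2}.

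\medskip
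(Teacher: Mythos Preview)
Your proposal is correct and follows essentially the same approach as the paper: the same choice of $M$, levels $k_n,\tilde k_n$, shrinking cylinders $Q_n(\theta)$, the same use of \eqref{Eq:4:3a} to replace $|u|+|k_n|$ by $\boldsymbol\om$ in the energy estimate, and the same key cancellation $\boldsymbol\om^{p-2}M^2/\theta=\eps^{2-p}M^p$ that makes the recursion constants independent of $j_*$. The only cosmetic difference is that in the Sobolev step the paper starts from $(M/2^{n+2})^p|A_{n+1}|\le\iint(u-\tilde k_n)_+^p\z^p$ and obtains the recursion exponent $1+\tfrac{p}{N+2}$, whereas you follow Lemma~\ref{Lm:3:3}'s linear version and obtain $1+\tfrac{1}{N+2}$; both variants are standard and yield a data-only $\nu_1$. (Your side remark about ``absorbing'' the threshold $M\le\tfrac12\xi\boldsymbol\om$ into $\nu_1$ is unnecessary: since $\eps\le\tfrac12\xi$ by the choice in Lemma~\ref{Lm:4:2} and $j_*>1$, the threshold holds automatically.)
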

\begin{proof} 
Let $M:=2^{-j_*}\eps\boldsymbol\om$ and 
\begin{equation*}
	k_n:= \boldsymbol\mu^+-\frac{M}2-\frac{M}{2^{n+1}}.
\end{equation*}
Then, define $\tilde{k}_n$, $\varrho_n$, $\tilde{\varrho}_n$, $K_n$, $\widetilde{K}_n$, $Q_n$ and $\widetilde{Q}_n$ as in the proof of Lemma~\ref{Lm:3:3}, i.e.~as in \eqref{choices:k_n}. Introduce the cutoff functions $\z$ vanishing on the parabolic boundary of $Q_{n}$ and
equal to identity in $\widetilde{Q}_{n}$, such that
\begin{equation*}
	|D\z|\le\boldsymbol\gm\frac{2^n}{\varrho}\quad\text{ and }\quad |\z_t|\le\boldsymbol\gm\frac{2^{pn}}{\theta\varrho^p}.
\end{equation*}
Thus, using again the condition $\xi\boldsymbol\om\le \boldsymbol\mu^+\le 2\boldsymbol\om$  to estimate the 
terms in the energy inequality (see the proof of Lemma \ref{Lm:3:3}) we obtain
\begin{align*}
	\boldsymbol\om^{p-2}\essup_{-\theta\tilde\varrho^p<t<0} 
	&
	\int_{\widetilde{K}_n}\big(u-\tilde{k}_n\big)^2_+\,\dx
	+
	\iint_{\widetilde{Q}_n}\big|D\big(u-\tilde{k}_n\big)_+\big|^p\,\dx\dt\\
	&\le
	\boldsymbol\gm\frac{2^{pn}}{\varrho^p}M^{p}\left(1+\frac{\boldsymbol\om^{p-2}}{\theta M^{p-2}}\right)|A_n|
	=
	\boldsymbol\gm\frac{2^{pn}}{\varrho^p}M^{p}\big(1+\eps^{2-p}\big)|A_n|,
\end{align*}
where we abbreviated 
\[
	A_n=\big\{u>k_n\big\}\cap Q_n.
\]
The constant $\boldsymbol\gamma$ depends on the data and $\xi$. The latter dependence enters due to the estimate
from below of the sup-term in the energy inequality. Note that $\xi$ is already determined in dependence on the data. 
Now setting $\z$ to be a cutoff function which vanishes on the parabolic boundary of $\widetilde{Q}_n$
and equals identity in $Q_{n+1}$, an application of the Sobolev imbedding
\cite[Chapter I, Proposition~3.1]{DB} and the preceding estimate imply that
\begin{align*}
	&\bigg(\frac{M}{2^{n+2}}\bigg)^p
	|A_{n+1}|\le \iint_{\widetilde{Q}_n}\!\!\!\big(u-\tilde{k}_n\big)_+^p\z^p\,\dx\dt\\
	&
	\qquad\le
	\bigg[\iint_{\widetilde{Q}_n}\!\!\!\big[(u-\tilde{k}_n)_+\z\big]^{p\frac{N+2}{N}}\,\dx\dt\bigg]^{\frac{N}{N+2}}
	|A_n|^{\frac{2}{N+2}}\\
	&\qquad\le
	\boldsymbol
	\gm\bigg[\iint_{\widetilde{Q}_n}\!\!\!\big|D\big[(u-\tilde{k}_n)_+\z\big]\big|^p\,\dx\dt\bigg]^{\frac{N}{N+2}}
	\bigg[\essup_{-\theta\tilde{\varrho}_n^p<t<0}\int_{\widetilde{K}_n}\!\!\!\big(u-\tilde{k}_n\big)^2_-\,\dx
	\bigg]^{\frac{p}{N+2}}|A_n|^{\frac{2}{N+2}}\\
	&\qquad\le 
	\boldsymbol\gm\boldsymbol\om^{\frac{p(2-p)}{N+2}}
	\bigg(\frac{2^{pn}}{\varrho^p}M^{p}\bigg)^{\frac{N+p}{N+2}}
	\big(1+\eps^{2-p}\big)^{\frac{N+p}{N+2}}
	|A_n|^{1+\frac{p}{N+2}}.
\end{align*}
Setting $\boldsymbol Y_n=|A_n|/|Q_n|$, we arrive at
\begin{align*}
	\boldsymbol Y_{n+1}
	&\le
	\boldsymbol\gm \boldsymbol b^n
	\left(\frac{\theta M^{p-2}}{\boldsymbol\om ^{p-2}}\right)^{\frac{p}{N+2}}
	\big(1+\eps^{2-p}\big)^{\frac{N+p}{N+2}}
	\boldsymbol Y_n^{1+\frac{p}{N+2}} \\
	&=
	\boldsymbol\gm \boldsymbol b^n
	\eps^{\frac{p(p-2)}{N+2}}
	\left(1+\eps^{2-p}\right)^{\frac{N+p}{N+2}}
	\boldsymbol Y_n^{1+\frac{p}{N+2}},
\end{align*}
where  $\boldsymbol b=4^p$ and $\boldsymbol \gm$  only depends on the data.
Hence by \cite[Chapter I, Lemma~4.1]{DB}, there exists a constant $\nu_1\in(0,1)$
depending only on the data, such that $\boldsymbol Y_n\to0$ if we require the smallness condition
$\boldsymbol Y_o\le \nu_1$.
\end{proof}

We are now ready to conclude the reduction of oscillation near the supremum in the final case where \eqref{Eq:4:3a} and \eqref{Eq:4:3b} are satisfied. By $\epsilon\in(0,1)$, $\boldsymbol\gamma>0$ and $\nu_1\in(0,1)$ we denote the corresponding constants from Lemmas~\ref{Lm:4:2}, \ref{Lm:A:4} and \ref{Lm:5:3}. Then, we choose a positive integer $j_*$ in such a way that 
\begin{equation*}
	\frac{\boldsymbol\gm}{j_*^{\frac{p-1}p}}
	\le
	\nu_1. 
\end{equation*}
Applying in turn Lemmas~\ref{Lm:4:2}, \ref{Lm:A:4} and \ref{Lm:5:3} then yields that 
\[
	\boldsymbol\mu^{+}-u
	\ge
	\frac{\eps\boldsymbol\om}{2^{j_*+1}}\quad\mbox{a.e. in $\widetilde{Q}_1$,}
\]
where $\widetilde{Q}_1$ is defined in \eqref{def-Q1}. 
Here we used the fact that $Q_{\frac12\varrho}(\theta)\supset \widetilde{Q}_1$ since $\theta>1$. 
This implies a reduction of oscillation, i.e.~we have
\[
	\essosc_{\widetilde{Q}_1}u
	\le
	\Big(1-\frac{\eps\boldsymbol}{2^{j_*+1}}\Big)\boldsymbol\om.
\]

\subsection{Reduction of Oscillation Near Zero Concluded}
Let us first define quantities
\[
\lm=\min\left\{\frac14,\,\frac1{2A^{\frac1p}}\right\},\quad \bar{\eta}=\min\left\{\eta,\,\eta_1,\,\eta_2,\,\frac{\eps\boldsymbol}{2^{j_*+1}}\right\}.
\]
Now we may proceed by induction. Suppose up to $i=1,2,\cdots j-1$, we have built
\begin{equation*}
\left\{
\begin{array}{c}
\dsty\varrho_i=\lm\varrho_{i-1},\quad \boldsymbol\om_i=(1-\bar{\eta})\boldsymbol\om_{i-1},
\quad Q_i=K_{\varrho_i}\times(-\varrho_i^p,0],\\[5pt]
\dsty\boldsymbol\mu_i^+=\essup_{Q_i}u,\quad\boldsymbol\mu_i^-=\essinf_{Q_i}u,\quad
\essosc_{Q_i}u\le\boldsymbol\om_i.
\end{array}
\right.
\end{equation*}
For all the indices $i=1,2,\cdots j-1$, we always assume the first case in \eqref{Eq:Hp-main1}, i.e.
$$\boldsymbol\mu_i^-\le \xi\boldsymbol \om_i\quad
\text{ and }\quad\boldsymbol\mu_i^+\ge \xi\boldsymbol \om_i,$$
where $\xi$ is determined in Proposition~\ref{Prop:1:1}.
In this way the argument in the previous sections can be repeated, and we have for all $i=1,2,\cdots j$,
\[
\essosc_{Q_i}u\le(1-\bar{\eta})\boldsymbol\om_{i-1}=\boldsymbol\om_i.
\]
Consequently, iterating this recursive inequality we obtain for all $i=1,2,\cdots j$,
\begin{equation}\label{Eq:case11}
\essosc_{Q_i}u\le(1-\bar\eta)^i\boldsymbol\om
=\boldsymbol\om\left(\frac{\rho_i}{\rho}\right)^{\be_o}\quad\text{ where }\beta_o=\frac{\ln(1-\bar\eta)}{\ln\lm}.
\end{equation}

\subsection{Reduction of Oscillation Away From Zero}\label{S:4:4}
In this section, let us suppose $j$ is the first index satisfying the second case in \eqref{Eq:Hp-main1}, i.e.
\[
\text{either \quad$\boldsymbol\mu_j^-> \xi\boldsymbol \om_j$ \quad or\quad $\boldsymbol\mu_j^+< -\xi\boldsymbol \om_j$.}
\]
Let us treat for instance $\boldsymbol\mu_j^->\xi\boldsymbol\om_j$,
for the other case is analogous.
We observe that since $j$ is the first index for this to happen,
one should have $\boldsymbol\mu_{j-1}^-\le \xi\boldsymbol \om_{j-1}$.
Moreover, one estimates
\[
\boldsymbol\mu_{j}^-\le \boldsymbol\mu_{j-1}^- +\boldsymbol \om_{j-1}-\boldsymbol \om_{j}
\le(1+\xi)\boldsymbol \om_{j-1}-\boldsymbol \om_{j}=\frac{\xi+\bar\eta}{1-\bar\eta}\boldsymbol \om_{j}.
\]
As a result, we have
\begin{equation}\label{Eq:6:9}
\xi\boldsymbol \om_{j}\le\boldsymbol \mu_{j}^-\le\frac{\xi+\bar\eta}{1-\bar\eta}\boldsymbol \om_{j}.
\end{equation}
The condition \eqref{Eq:6:9} indicates that starting from $j$ the equation \eqref{Eq:1:1}
resembles the parabolic $p$-Laplacian type equation in $Q_j$. 
Like when $1<p<2$ (cf. Section~\ref{S:5:3}), we drop the suffix $j$ from our notation for simplicity,
and introduce $v\df{=}u/\boldsymbol\mu^-$ in $Q=K_{\rho}\times(-\rho^p,0]$.
As in Section~\ref{S:5:3}, $v$ satisfies \eqref{Eq:1:1} -- \eqref{Eq:1:2p}
with $\bl{A}(x,t,u,Du)$ replaced by some properly defined $\bar{\bl{A}}(x,t,v,Dv)$,
which is subject to the structural conditions \eqref{Eq:1:2p}.
Moreover,
\begin{equation}\label{Eq:6:10}
1\le v\le\frac{1+\xi}{\xi}\quad\text{ a.e. in }Q.
\end{equation}
As in Section~\ref{S:5:3}, it turns out to be more convenient to consider the equation satisfied by $w:=v^{p-1}$,
i.e.
\begin{equation*}
\partial_tw-\dvg\widetilde{\bl{A}}(x,t,w, Dw)=0\quad\text{ weakly in }Q,
\end{equation*}
where similarly as in  \eqref{def:tilde-A} we define  the vector-field $\widetilde{\bf A}$ by
\[
	\widetilde{\bl{A}}(x,t,y, \zeta)
	=\
	\bar{\bl{A}}\Big( x,t,\widetilde y^{\frac{1}{p-1}},\tfrac{1}{p-1} \widetilde y^{\frac{2-p}{p-1}}
	\zeta\Big),
\]
for a.e.~$(x,t)\in Q$, 
any $y\in\rr$ and any $\zeta\in\rn$. This time $\widetilde y$
is defined by
\[
	\widetilde y \df{=}\min\bigg\{ \max\big\{y,\tfrac12\big\}, 2\bigg(\frac{1+\xi}{\xi}\bigg)^{p-1}\bigg\}.
\]
It is easy to see that $w$ is in the same kind of functional space \eqref{Eq:1:3p}
 as $u$ and $v$ due to \eqref{Eq:6:10}.
Employing \eqref{Eq:6:10} again, we verify exactly as in Section~\ref{S:5:3} that there exist
absolute positive constants $\widetilde{C}_o=\boldsymbol\gamma_o (p,\xi)C_o$ and $\widetilde{C}_1= \boldsymbol\gamma_1 (p,\xi)C_1$, such that
\begin{equation*}
		\widetilde{\bl{A}}(x,t,y,\zeta)\cdot \zeta\ge \widetilde{C}_o|\zeta|^p 
		\quad\mbox{and}
		\quad
		|\widetilde{\bl{A}}(x,t,y,\zeta)|\le \widetilde{C}_1|\zeta|^{p-1},
\end{equation*}
for a.e.~$(x,t)\in Q$, any $y\in\rr$, and any $\zeta\in\rn$. Note that
$\xi$ is already fixed in dependence of the data. This shows
that $w$ is a local weak solution to the parabolic $p$-Laplacian type equation in $Q$.
We tend to use Proposition~\ref{Prop:5:1}.
To order for that, we first check the condition \eqref{Eq:5:8}
is satisfied. Indeed, recalling $v=u/\boldsymbol\mu^-$, $w=v^{p-1}$ and $\boldsymbol\om=\essosc_Q u$, 
we first use \eqref{Eq:6:10} and the mean value theorem to obtain
\[
(p-1)\essosc_Q v\le\boldsymbol{\widetilde{\om}}=\essosc_Q w\le(p-1)\left(\frac{1+\xi}{\xi}\right)^{p-2}\essosc_Q v.
\]
Since $\essosc_Q v=\boldsymbol\om/\boldsymbol\mu^-$,
this amounts to 
\[
(p-1)\frac{\boldsymbol\om}{\boldsymbol\mu^-}\le\boldsymbol{\widetilde{\om}}
\le(p-1)\left(\frac{1+\xi}{\xi}\right)^{p-2}\frac{\boldsymbol\om}{\boldsymbol\mu^-}.
\]
Then by \eqref{Eq:6:9}, we have
\[
c\df{=}(p-1)\frac{1-\bar\eta}{\xi+\bar\eta}\le\boldsymbol{\widetilde{\om}}\le 
\frac{p-1}{\xi}\left(\frac{1+\xi}{\xi}\right)^{p-2}\df{=}C.
\]
Thus we only need to take $\sig\le c^{p-2}$, such that
$Q_{\sig\rho}(\theta)\subset Q_\rho$ and the condition \eqref{Eq:5:8}
 in Proposition~\ref{Prop:5:1} is fulfilled.
As a result, the conclusion of Proposition~\ref{Prop:5:1} is obtained.
Moreover, the above upper bound of $\boldsymbol{\widetilde{\om}}$ actually allows us to obtain the set inclusion
\[
Q_{r}(\theta_o)\subset Q_r(\theta)\quad\text{ where }\theta_o=C^{2-p}.
\]
Using this set inclusion and rephrasing the oscillation decay in Proposition~\ref{Prop:5:1} in terms of $u$, we have
 for all $0<r<\rho$, that  the oscillation decay estimate
\[
	\essosc_{Q_r(\theta_o)}u\le\boldsymbol{\gm\om}\left(\frac{r}{\rho}\right)^{\be_1}
\]
holds true.
Now we revert to using the suffix $j$. The above oscillation decay then reads
\begin{equation}\label{Eq:case12}
	\essosc_{Q_r(\theta_o)}u\le\boldsymbol\gm\boldsymbol\om_j\left(\frac{r}{\rho_j}\right)^{\be_1}
\end{equation}
whenever $0<r<\rho_j$.
Combining \eqref{Eq:case11} and \eqref{Eq:case12},
we arrive at the desired conclusion, i.e., for all $0<r<\rho$ we have
\[
\essosc_{Q_r(\theta_o)}u\le\boldsymbol{\gm\om}\left(\frac{r}{\rho}\right)^{\be}\quad\text{ where }\be=\min\{\beta_o,\beta_1\}.
\]
A proper rescaling gives the oscillation decay in Remark~\ref{Rmk:1:1} and  completes the proof
of Theorem \ref{Thm:1:1}.

\section{Proof of Boundary Regularity} 
The proofs of Theorems~\ref{Thm:1:2} -- \ref{Thm:1:4}
present many similarities with the interior case. Hoewver, contrary to the interior case we do not need to distinguish between the cases $p<2$ and $p>2$. 
All the technical tools needed near the parabolic boundary
have been presented previously. Therefore, we will give sketchy proofs only,
while keeping reference to the tools and strategies used in
the interior and highlighting the main modifications.
\subsection{Proof of Theorem~\ref{Thm:1:2}}
Consider the cylinder of forward type $Q_o=K_{\rho}(x_o)\times(0,\rho^p]\subset E_T$
whose vertex $(x_o,0)$ is attached to the bottom of $E_T$.
We may assume $x_o=0$ and
set
\begin{equation*}
	\boldsymbol \mu^+=\essup_{Q_o}u,
	\quad
	\boldsymbol\mu^-=\essinf_{Q_o}u,
	\quad
	\boldsymbol\om=\boldsymbol\mu^+-\boldsymbol\mu^-.
\end{equation*}
Like in the proof of interior regularity, there are two main
cases to consider, namely 
\begin{equation}\label{Hp-main-initial}
\left\{
\begin{array}{c}
\mbox{when $u$ is near zero:  $\boldsymbol\mu^-\le\boldsymbol\om$ and 
$\boldsymbol\mu^+\ge-\boldsymbol\om$};\\[5pt]
\mbox{when $u$ is away from zero: $\boldsymbol\mu^->\boldsymbol\om$ or $\boldsymbol\mu^+<-\boldsymbol\om$.}
\end{array}\right.
\end{equation}
Let us suppose the first case holds, which implies $|\boldsymbol\mu^{\pm}|\le2\boldsymbol\om$.
The proof continues with a comparison to the initial datum $u_o$, i.e., we may assume
\[
	\mbox{either \quad$\dsty\boldsymbol \mu^+-\tfrac14\boldsymbol \om>\sup_{K_\rho}u_o\;\;$ 
	or
	$\;\;\dsty\boldsymbol \mu^-+\tfrac14\boldsymbol \om<\inf_{K_\rho}u_o$.}
\]
For otherwise, we would arrive at
\[
\essosc_{Q_o}u\le2\essosc_{K_\rho}u_o.
\]
Let us assume for instance the second inequality with $\boldsymbol \mu^-$ holds
and work with $u$ as a super-solution.
Therefore, we let $\theta\in(0,1)$ to be chosen later and define $\tilde{k}_n$, $\varrho_n$, $\tilde{\varrho}_n$, $K_n$ and
$\widetilde{K}_n$,  
as in the proof of Lemma~\ref{Lm:3:3} according to \eqref{choices:k_n},
with $M$ replaced by $\frac14\boldsymbol\om$. The only difference is that now the cylinders 
$Q_n$ and $\widetilde{Q}_n$ are of forward type whose vertices are attached to the origin,
i.e.,~$Q_n=K_n\times(0,\theta\rho_n^p]$ and $\widetilde{Q}_n=\widetilde{K}_n\times(0,\theta\tilde{\rho}_n^p]$.
With these choices,
we may apply the energy estimates in Proposition~\ref{Prop:2:2} within $Q_n$, 
since the levels $k_n$ are admissible according to \eqref{Eq:3:2}, i.e.,
$$k_n\le\boldsymbol \mu^-+\tfrac14\boldsymbol \om<\inf_{K_\rho}u_o.$$
Using $|\boldsymbol\mu^{-}|\le2\boldsymbol\om$, 
a similar analysis as in the proof of Lemma~\ref{Lm:3:3} leads us to the analogue of \eqref{Eq:sample}, i.e.~to the energy estimate
\begin{align*}
	\frac{\boldsymbol\om^{p-2}}{2^{p(n+3)}} \essup_{0<t<\theta\varrho_n^p}
	\int_{\widetilde{K}_n} (u-\tilde{k}_n)_-^2\,\dx
	+
	\iint_{\widetilde{Q}_n}|D(u-\tilde{k}_n)_-|^p \,\dx\dt
	\le
	\boldsymbol\gm \frac{2^{pn}}{\varrho^p}\boldsymbol\om^{p}|A_n|,
\end{align*}
where we have abbreviated 
\[
	A_n=\big\{u>k_n\big\}\cap Q_n.
\]
Note that in \eqref{Eq:sample} we only have to replace $M$ by $\frac14\boldsymbol \om$.
Now we are in a situation similar to Lemma~\ref{Lm:6:1}. More precisely, 
\eqref{Eq:sample*}  holds true with $\eta_1=1$ and -- on the right-hand side -- $2^{pn}$ replaced by $4^{pn}$.
Now, applying the Sobolev imbedding as in the proof of Lemma~\ref{Lm:6:1} and rewriting the resulting
estimate in terms of $\boldsymbol Y_n=|A_n|/|Q_n|$, we  arrive at
\[
	\boldsymbol Y_{n+1}\le\boldsymbol\gm \boldsymbol b^n\theta^{\frac{p}{N+2}}
	\boldsymbol Y_n^{1+\frac{p}{N+2}},
\]
where $\boldsymbol\gm$ and $\boldsymbol b$ are positive constants depending only on the data.
Hence by \cite[Chapter I, Lemma~4.1]{DB}, there exists a constant $\nu_o\in(0,1)$
depending only on the data, such that $\boldsymbol Y_n\to0$ as $n\to\infty$ if we require that
\begin{equation*}
	\boldsymbol Y_o\le\frac{\nu_o}{\theta}.
\end{equation*}
Upon choosing $\theta=\nu_o$, the above line is automatically satisfied
and as a result we obtain
\[
	u\ge\boldsymbol\mu^-+\tfrac18\boldsymbol\om\quad\mbox{a.e.~in 
	$\widehat{Q}_1\df{=}K_{\frac12\rho}\times\big(0,\theta(\tfrac12\rho)^p\big]$.}
\]
This in turn yields a reduction of oscillation of the form
\[
	\essosc_{\widehat{Q}_1}u\le\tfrac78\boldsymbol\om .
\]
Consequently,  taking the initial datum into consideration, we obtain (see 
\cite[Chapter III, Lemma~11.1]{DB} for the corresponding estimate for weak solutions to parabolic $p$-Laplacian
equations)
\[
	\essosc_{\widehat{Q}_1}u\le\max\Big\{ \tfrac78\boldsymbol\om, 2\boldsymbol\om_{u_o}(\rho)\Big\}.
\]
Now we may proceed by induction. Define $\lm=\tfrac12\theta^{\frac1p}$, and 
suppose up to $i=1,2,\cdots j-1$, we have built sequences 
\begin{equation*}
\left\{
	\begin{array}{c}
	\dsty\varrho_i=\lm\varrho_{i-1},
	\quad 
	\boldsymbol\om_i=\max\Big\{\tfrac78\essosc_{Q_{i-1}}u,2\boldsymbol\om_{u_o}(\rho_{i-1})\Big\},
	\quad 
	Q_i=K_{\varrho_i}\times(0,\varrho_i^p],\\[5pt]
	\dsty\boldsymbol\mu_i^+=\essup_{Q_i}u,
	\quad
	\boldsymbol\mu_i^-=\essinf_{Q_i}u,
	\quad
	\essosc_{Q_i}u\le\boldsymbol\om_i.
	\end{array}
\right.
\end{equation*}
For all the indices $i=1,2,\cdots j-1$, we always assume the first alternative \eqref{Hp-main-initial}, i.e.~that
$$
	\boldsymbol\mu_i^-\le\essosc_{Q_i}u\quad
	\text{ and }\quad\boldsymbol\mu_i^+\ge-\essosc_{Q_i}u
$$
holds true. In this way the above argument can be repeated and we have for all $i=1,2,\cdots j$, the reduction of oscillation
\[
	\essosc_{Q_i}u
	\le
	\max\Big\{ \tfrac78\essosc_{Q_{i-1}}u, 2\boldsymbol\om_{u_o}(\rho_{i-1})\Big\}
	=\boldsymbol\om_i.
\]
Consequently, iterating the above recursive inequality, we obtain for all $i=1,2,\cdots j$,
\begin{align}\label{Eq:case-j}
	\essosc_{Q_i}u&\le
	\boldsymbol\om \bigg(\frac78\bigg)^i
	+
	2\boldsymbol\om_{u_o}(\rho)\sum_{j=0}^{i-1}\bigg(\frac78\bigg)^j
	\le
	\boldsymbol\om\left(\frac{\rho_i}{\rho}\right)^{\be_o} +16\boldsymbol\om_{u_o}(\rho),
\end{align}
where
\[
	\beta_o=\frac{\ln\frac78}{\ln\lm}. 
\]

In what follows, let us suppose $j$ is the first index satisfying 
\[
	\mbox{either $\;\boldsymbol\mu_j^-> \boldsymbol \om_j\;$
	or
	$\;\boldsymbol\mu_j^+<-\boldsymbol \om_j$.}
\]
Let us treat for instance $\boldsymbol\mu_j^->\boldsymbol\om_j$,
for the other case is analogous.
As in Section~\ref{S:5:3} we use the fact that $j$ is the first index for this to happen to obtain
\begin{equation*}
	\boldsymbol \om_{j}
	<
	\boldsymbol \mu_{j}^-\le\tfrac97\boldsymbol \om_{j};
\end{equation*}
cf.~the proof of \eqref{Eq:5:4}. 
Then,   for simplicity we drop the suffix $j$ from our notation temporarily,
and introduce $v:=u/\boldsymbol\mu^-$ and $w:=v^{p-1}$ in $Q=K_{\rho}\times(0,\rho^p]$.
In this way, the function $w$ will satisfy the parabolic $p$-Laplacian type equation \eqref{Eq:5:6} -- \eqref{Eq:5:7}.
Moreover, it attains the initial datum $w_o:=(u_o/\boldsymbol\mu^-)^{p-1}$ in the sense of $L^2(K_\rho)$.
Next, we state in the following proposition concerning the regularity of solutions
to the parabolic $p$-Laplacian type equation up to the initial time (cf. \cite{DB}). 

\begin{proposition}\label{Prop:7:1}
Let $p>1$ and $\sig$ in $(0,1)$.
Suppose $w$ is a bounded, local, weak solution to 
\eqref{Eq:5:6}  in $Q:=Q_\rho$ such that the structure conditions \eqref{Eq:5:7} are in force, and such that
$w(\cdot,t)\to w_o$ as $t\downarrow0$ in the sense of $L^2(K_\rho)$.
Assume $w_o$ is continuous in $K_\rho$ with modulus of continuity $\boldsymbol\om_{w_o}(\cdot)$. Let
$$
	\boldsymbol{\widetilde{\om}}=\essosc_Q w
	\quad\mbox{and}\quad
	\theta=\boldsymbol{\widetilde{\om}}^{2-p}.
$$
Then, there exist constants $\beta_1$ in $(0,1)$ and $\boldsymbol\gm>1$ depending only on the data
$N,p,\widetilde C_o,\widetilde C_1$ and $\sig$ (but independent of $w$), such that there holds:
whenever  we have
\begin{equation*}
	\essosc_{Q_{\sig\varrho}(\theta)}w\le\boldsymbol{\widetilde{\om}},
\end{equation*}
then the oscillation decay estimate
\[
	\essosc_{Q_r(\theta)}w\le\boldsymbol\gm\boldsymbol{\widetilde{\om}}\left(\frac{r}{\rho}\right)^{\be_1}
	+\boldsymbol\gm\boldsymbol\om_{w_o}(\rho)
\]
holds true for all $0<r<\rho$. Here, we use the notation $Q_r(\theta)=B_r\times(0,\theta r^p]$. 
\end{proposition}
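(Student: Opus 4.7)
The plan is to reduce to the interior oscillation decay of Proposition~\ref{Prop:5:1} by a scale-by-scale iteration that absorbs the influence of the initial datum through its modulus of continuity $\boldsymbol\om_{w_o}$. The starting point is that once we are in the interior, i.e., at time levels bounded away from $0$, Proposition~\ref{Prop:5:1} already gives a clean geometric decay. Near $t=0$, however, the initial trace of $w$ comes into play, and the argument has to be adapted so that the boundary term at $t=0$ in the energy estimate can be discarded; this is done by restricting the truncation levels $k$ to satisfy $k\ge\sup_{K_\rho}w_o$ (respectively $k\le\inf_{K_\rho}w_o$), an admissibility condition which is the $p$-Laplacian analogue of \eqref{Eq:3:2}.

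Concretely, I would build a sequence of radii $\rho_i=\lambda^i\rho$ with $\lambda\in(0,\sigma]$ chosen universally, set $\boldsymbol{\widetilde\om}_i\df{=}\essosc_{Q_{\rho_i}(\theta_i)}w$ with $\theta_i=\boldsymbol{\widetilde\om}_i^{2-p}$, and prove at each step the recursive alternative
\begin{equation*}
\boldsymbol{\widetilde\om}_{i+1}\le\max\bigl\{(1-\eta)\boldsymbol{\widetilde\om}_i,\ \boldsymbol\gm\boldsymbol\om_{w_o}(\rho_i)\bigr\},
\end{equation*}
for some universal $\eta\in(0,1)$ and $\boldsymbol\gm>1$. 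To obtain this alternative, one distinguishes two cases. If $\boldsymbol{\widetilde\om}_i\le\boldsymbol\gm\boldsymbol\om_{w_o}(\rho_i)$, the right-hand side trivially dominates. Otherwise, one runs the intrinsic DeGiorgi/expansion-of-positivity machinery for the $p$-Laplace type equation exactly as in the proof of Proposition~\ref{Prop:5:1}: a measure alternative at time level $-\tfrac12\theta_i\rho_i^p$ (or at $t=0$ when the cylinder abuts the initial slice) followed by either a DeGiorgi lemma near $\inf_{Q_i}w$ or a logarithmic estimate near $\sup_{Q_i}w$. When the cylinder touches $\{t=0\}$, the crucial modification is to use truncation levels comparable to $\inf_{K_{\rho_i}}w_o$ or $\sup_{K_{\rho_i}}w_o$, so that Proposition~\ref{Prop:2:2} (adapted to \eqref{Eq:5:6}--\eqref{Eq:5:7}) applies and the initial slice contribution vanishes. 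Comparing the chosen level with the actual supremum/infimum of $w$ introduces a discrepancy that is controlled precisely by $\boldsymbol\om_{w_o}(\rho_i)$, which is the origin of the second term in the max.

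Iterating the recursive inequality and using the elementary estimate
\begin{equation*}
\boldsymbol{\widetilde\om}_i\le (1-\eta)^i\boldsymbol{\widetilde\om}+\boldsymbol\gm\sum_{j=0}^{i-1}(1-\eta)^{i-1-j}\boldsymbol\om_{w_o}(\rho_j)\le\boldsymbol\gm\boldsymbol{\widetilde\om}(\rho_i/\rho)^{\beta_1}+\boldsymbol\gm\boldsymbol\om_{w_o}(\rho),
\end{equation*}
with $\beta_1=\ln(1-\eta)/\ln\lambda$ and using the monotonicity of $\boldsymbol\om_{w_o}$, I would then interpolate for arbitrary $0<r<\rho$ by choosing $i$ with $\rho_{i+1}<r\le\rho_i$, obtaining the asserted estimate. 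The main obstacle, and the step that requires real care, is the case when the intrinsic cylinder $Q_{\rho_i}(\theta_i)$ reaches back to $t=0$: one must verify that the intrinsic time length $\theta_i\rho_i^p$ can be consistently taken no larger than the distance to the initial level, which forces the alternative scheme to be run either in a fully interior cylinder or in a cylinder anchored at $t=0$, and in the latter case the admissibility \eqref{Eq:3:2} for the truncation levels must be enforced throughout the DeGiorgi iteration. Handling this bookkeeping, together with the intrinsic dependence $\theta_i=\boldsymbol{\widetilde\om}_i^{2-p}$ which changes from step to step, is the delicate part, but it proceeds exactly as in \cite{DB}, Chapter~III.
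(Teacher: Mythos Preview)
Your proposal is essentially correct and aligns with the approach in \cite{DB}, Chapter~III, which is precisely what the paper invokes: Proposition~\ref{Prop:7:1} is stated without proof and attributed to \cite{DB}. So there is no ``paper's own proof'' to compare against beyond that citation.

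One remark on simplification. You invoke the full interior machinery (measure alternative at an intermediate time slice, logarithmic estimate near the supremum) and then worry about whether the intrinsic cylinder reaches $t=0$. In the initial-boundary setting of Proposition~\ref{Prop:7:1} all cylinders $Q_{\rho_i}(\theta_i)$ are forward and anchored at $t=0$, so this dichotomy never arises and the argument is actually cleaner than you suggest. At each step one compares $\boldsymbol\mu_i^\pm$ directly with $\sup_{K_{\rho_i}}w_o$ and $\inf_{K_{\rho_i}}w_o$: if both $\boldsymbol\mu_i^+-\tfrac14\boldsymbol{\widetilde\om}_i\le\sup_{K_{\rho_i}}w_o$ and $\boldsymbol\mu_i^-+\tfrac14\boldsymbol{\widetilde\om}_i\ge\inf_{K_{\rho_i}}w_o$, then $\boldsymbol{\widetilde\om}_i\le 2\boldsymbol\om_{w_o}(\rho_i)$ outright; otherwise one of the truncation levels is admissible in the sense of \eqref{Eq:3:2}, the initial-slice term in the energy estimate vanishes, and a single DeGiorgi iteration (no measure alternative, no logarithmic lemma) yields the contraction $(1-\eta)\boldsymbol{\widetilde\om}_i$. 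This is exactly what the paper does in its proof of Theorem~\ref{Thm:1:2} for the doubly nonlinear equation before reducing to Proposition~\ref{Prop:7:1}. Your recursive alternative and its summation are correct as written; only the mechanism producing the alternative is simpler than you indicate.
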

As in Sections~\ref{S:5:3} and \ref{S:4:4} (distinguishing the cases $1<p<2$ and $p>2$), one quickly checks that there exist absolute constants $c,\,C>0$, such that
$c\le\boldsymbol{\widetilde{\om}}\le C$ and
the condition in Proposition~\ref{Prop:7:1} is fulfilled for some proper $\sig$.
Moreover, the lower/upper bounds of $\boldsymbol{\widetilde{\om}}$ actually allow us to obtain the set inclusion
\[
	Q_{r}(\theta_o)\subset Q_r(\theta)\quad\text{ where }
	\theta_o=\min\left\{c^{2-p},\,C^{2-p}\right\}.
\]
Using this set inclusion and rephrasing the oscillation decay of Proposition~\ref{Prop:7:1} in terms of $u$, we have
\[
	\essosc_{Q_r(\theta_o)}u
	\le
	\boldsymbol\gm\boldsymbol\om\left(\frac{r}{\rho}\right)^{\be_1}
	+\boldsymbol\gm\boldsymbol\om_{u_o}(\rho),
\]
whenever  $0<r<\rho $; here we argue similarly to the proof of \eqref{Eq:case12}.
Now we revert to using the suffix $j$. The above oscillation estimate then reads as
\begin{equation*}
	\essosc_{Q_r(\theta_o)}u
	\le
	\boldsymbol\gm\boldsymbol\om_j\left(\frac{r}{\rho_j}\right)^{\be_1}
	+\boldsymbol\gm\boldsymbol\om_{u_o}(\rho_j)
\end{equation*}
for all $0<r<\rho_j$.
Combining the above two cases, we arrive at the desired conclusion, i.e., for all $0<r<\rho$, there holds
\[
	\essosc_{Q_r(\theta_o)}u
	\le
	\boldsymbol\gm\boldsymbol\om\left(\frac{r}{\rho}\right)^{\be}
	+\boldsymbol\gm\boldsymbol\om_{u_o}(\rho)\quad\text{ where }\be=\min\{\beta_o,\beta_1\}.
\]
Observe that we may replace $\rho$ by any $\tilde{\rho}\in(r,\rho)$.
In particular, we may set $\tilde\rho=\sqrt{r\rho}$. In this way, we end up with
\[
	\essosc_{Q_r(\theta_o)}u
	\le
	\boldsymbol\gm\boldsymbol\om\left(\frac{r}{\rho}\right)^{\frac\be2}
	+\boldsymbol\gm\boldsymbol\om_{u_o}(\sqrt{r\rho}).
\]

\subsection{Proof of Theorem~\ref{Thm:1:3}}
For $A\ge1$ to be determined, consider the cylinder  $Q_o=K_{\rho}(x_o)\times(t_o-A\rho^p,t_o]$
whose vertex $(x_o,t_o)$ is attached to $S_T$.
The number $\rho$ is so small that $t_o-A\rho^p>0$ and $\rho<\rho_o$
where $\rho_o$ is the constant from the property of positive geometric density \eqref{geometry}.
We may also assume $(x_o,t_o)=(0,0)$ and
set
\begin{equation*}
	\boldsymbol \mu^+=\essup_{Q_o\cap E_T}u,
	\quad
	\boldsymbol\mu^-=\essinf_{Q_o\cap E_T}u,
	\quad
	\boldsymbol\om=\boldsymbol\mu^+-\boldsymbol\mu^-.
\end{equation*}
Like in the proof of interior regularity, there are two main
cases to consider, namely 
\begin{equation}\label{Hp-main-Dirichlet}
\left\{
\begin{array}{c}
\mbox{when $u$ is near zero:  $\boldsymbol\mu^-\le\xi\boldsymbol\om$ and 
$\boldsymbol\mu^+\ge-\xi\boldsymbol\om$};\\[5pt]
\mbox{when $u$ is away from zero: $\boldsymbol\mu^->\xi\boldsymbol\om$ or $\boldsymbol\mu^+<-\xi\boldsymbol\om$.}
\end{array}\right.
\end{equation}
Here $\xi$ is the positive constant fixed in Proposition~\ref{Prop:1:1} through dependence on the data and $\al=\al_*$,
whereas $\al_*$ comes from the property of positive geometric density of $\pl E$.

Let us suppose the first case holds. 
The proof continues with a comparison to the boundary datum $g$, i.e., we may assume
\begin{equation*}
	\mbox{either}\quad\boldsymbol \mu^+-\tfrac14\boldsymbol \om>\sup_{Q_o\cap S_T}g\;
	\quad\mbox{or} \quad
	\; \boldsymbol \mu^-+\tfrac14\boldsymbol \om<\inf_{Q_o\cap S_T}g.
\end{equation*}
For otherwise, we would arrive at
\[
\essosc_{Q_o}u\le2\essosc_{Q_o\cap S_T}g.
\]
Let us suppose for instance the second inequality holds. To proceed, we turn our attention to the energy estimates in Proposition~\ref{Prop:2:2} for super-solutions.
Since $(u-k)_-$ vanishes on $Q_o\cap S_T$ for all $k\le\boldsymbol \mu^-+\tfrac14\boldsymbol \om$
(i.e.~$k$ satisfies \eqref{Eq:3:3}$_2$ with $Q_{R,S}$ replaced by $Q_o$),
we may extend all integrals in the energy estimates to zero outside of $E_T$.
The extended $(u-k)_-$ will be denoted by the same symbol and it is still
a member of the functional space in \eqref{Eq:1:3p} within $Q_o$.

The proofs of Lemma~\ref{Lm:3:2} and Lemma~\ref{Lm:3:3}
can be carried over to the current situation with properly chosen parameters, bearing in mind that
we have assumed $\pl E$ fulfills the property of positive geometric density \eqref{geometry}, and therefore
for any $k\le\boldsymbol \mu^-+\tfrac14\boldsymbol \om$, we have
\begin{equation}\label{Eq:7:3}
	\big|\big \{(u(\cdot, t)-k)_-=0\big\}\cap K_\rho(x_o)\big|\ge\al_*\big|K_\rho\big |\quad
	\mbox{for all $t\in(-A\rho^p,0]$.}
\end{equation}
Thus the conclusion of Proposition~\ref{Prop:1:1} can be reached. As a result
the oscillation is reduced in the case $1<p<2$ under the condition \eqref{Hp-main-Dirichlet}$_1$
with $\xi=1$ just like in Section~\ref{S:5},
whereas this is true for $p>2$ only when $|\boldsymbol\mu^-|<\xi\boldsymbol\om$
with some very small $\xi$.
As a result, one still needs to handle the situation when $\boldsymbol\mu^-<-\xi\boldsymbol\om$
since this is not excluded in \eqref{Hp-main-Dirichlet}$_1$, for $p>2$.

Like in Section~\ref{S:6}, there seems to be some technical complication due to
the smallness of the parameter $\xi$ in the case $p>2$. However, the property \eqref{Eq:7:3}
offers considerable simplification. Indeed, we do not need to split the proof into two parts. Our current hypothesis to continue consist of \eqref{Eq:7:3}
and $-2\boldsymbol\om<\boldsymbol\mu^-<-\xi\boldsymbol\om$
as we have assumed $\boldsymbol\mu^+\ge-\xi\boldsymbol\om$ in \eqref{Hp-main-Dirichlet}$_1$.
They are analogues of \eqref{Eq:4:3a} and \eqref{Eq:4:4} formulated near the infimum instead of the supremum, with which one can run the machinery
employed in Lemma~\ref{Lm:A:4} and Lemma~\ref{Lm:5:3}.
The only difference is that Lemma~\ref{Lm:A:4} and Lemma~\ref{Lm:5:3}
have been presented in terms of sub-solutions near the supreme,
whereas now one needs to reproduce similar arguments in terms of 
super-solutions near the infimum.
Therefore we can reduce the oscillation under the condition \eqref{Hp-main-Dirichlet}$_1$, for $p>2$ as well.

Next, we can proceed by induction just like the interior case until a certain index $j$,
when the second case of  \eqref{Hp-main-Dirichlet} happens for the first time.
Starting from $j$, the equation will behave like the parabolic $p$-Laplacian type equation within $Q_j\cap E_T$.
We may render this point technically just like in the interior case.
Accordingly, we need the following result near the lateral boundary (cf. \cite{DB}).
\begin{proposition}\label{Prop:7:2}
Let $p>1$ and $\sig\in (0,1)$.
Suppose $w$ is a bounded, local, weak solution to 
\eqref{Eq:5:6} -- \eqref{Eq:5:7} in $Q_\rho\cap E_T$
and $w= g$ on $Q_\rho\cap S_T$ in the sense defined in Section~\ref{S:1:4:3}.
Assume $g$ is continuous on $S_T$ with modulus of continuity $\boldsymbol\om_{g}(\cdot)$.  Define
$$
	\boldsymbol{\widetilde{\om}}=\essosc_{Q_\rho\cap E_T} w\quad
	\mbox{and}\quad \theta=\boldsymbol{\widetilde{\om}}^{2-p}.
$$
 Then, there exist constants $\beta_1$ in $(0,1)$ and $\boldsymbol\gm>1$ depending only on the data
$N,p,\widetilde C_o, \widetilde C_1$ and $\sig$ (but independent of $w$), such that there holds: if
\begin{equation}
	\essosc_{Q_{\sig\varrho}(\theta)\cap E_T}w\le\boldsymbol{\widetilde{\om}}
	,
\end{equation}
then,  the oscillation decay estimate 
\[
	\essosc_{Q_r(\theta)\cap E_T}w\le\boldsymbol\gm\boldsymbol{\widetilde{\om}}\left(\frac{r}{\rho}\right)^{\be_1}
	+\boldsymbol\gm\boldsymbol\om_{g}(\rho)
\]
holds true for all $0<r<\rho$.
\end{proposition}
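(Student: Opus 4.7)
The plan is to mirror the interior oscillation decay of Proposition~\ref{Prop:5:1} but work globally up to $S_T$, exploiting the assumed positive geometric density of $\partial E$ and the admissibility of levels controlled by $g$. Set
\[
\boldsymbol\mu^+ = \essup_{Q_\rho \cap E_T} w, \qquad \boldsymbol\mu^- = \essinf_{Q_\rho \cap E_T} w,
\]
so that $\boldsymbol{\widetilde\om} = \boldsymbol\mu^+ - \boldsymbol\mu^-$. A first comparison with the boundary datum shows that either
\[
\boldsymbol\mu^+ - \tfrac14 \boldsymbol{\widetilde\om} > \sup_{Q_\rho \cap S_T} g \quad\text{or}\quad \boldsymbol\mu^- + \tfrac14 \boldsymbol{\widetilde\om} < \inf_{Q_\rho \cap S_T} g,
\]
for otherwise $\boldsymbol{\widetilde\om} \le 2 \essosc_{Q_\rho \cap S_T} g \le 2 \boldsymbol\om_g(\rho)$ and the claimed oscillation decay is immediate. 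Assume, say, that the second inequality holds.

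Under that alternative any level $k \le \boldsymbol\mu^- + \tfrac14 \boldsymbol{\widetilde\om}$ satisfies \eqref{Eq:3:3}$_2$ relative to $g$, so that $(w-k)_-$ vanishes in the trace sense on $Q_\rho \cap S_T$ and extends by zero to an admissible test function in the parabolic Sobolev class. I would then invoke the boundary energy estimate Proposition~\ref{Prop:2:3} on intrinsic backward cylinders $Q_r(\theta)$ with $\theta = \boldsymbol{\widetilde\om}^{2-p}$, which absorbs the degeneracy (respectively singularity) of the $p$-Laplacian structure~\eqref{Eq:5:7}. The geometric density hypothesis \eqref{geometry} furnishes, free of charge, the pointwise measure information
\[
\big|\{(w(\cdot,t) - k)_- = 0\} \cap K_\rho(x_o)\big| \ge \alpha_\ast |K_\rho| \qquad \text{for every } t,
\]
replacing the measure alternative that had to be produced by Lemmas \ref{Lm:3:1}--\ref{Lm:3:2} in the interior. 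With this in hand, a DeGiorgi-type iteration analogous to Lemma~\ref{Lm:3:3}, run on the sequence $Q_{\varrho_n}(\theta)$, yields
\[
(w - k)_- \ge \tfrac12 (k - (\boldsymbol\mu^- + \tfrac14 \boldsymbol{\widetilde\om})) \quad \text{a.e.\ in } Q_{\frac12 \rho}(\theta) \cap E_T,
\]
i.e.\ a genuine reduction $\essosc_{Q_{\frac12\rho}(\theta) \cap E_T} w \le \eta \boldsymbol{\widetilde\om}$ for some universal $\eta \in (0,1)$.

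Iterating this reduction over a geometric sequence of radii $\varrho_n = \lambda^n \rho$ with a fixed $\lambda \in (0,1)$, and recording at each step the maximum between $\eta \boldsymbol{\widetilde\om}_{n-1}$ and a multiple of $\boldsymbol\om_g(\varrho_{n-1})$, produces by the standard interpolation argument (cf.\ \cite[Chapter III, Lemma~11.1]{DB}) the claimed estimate with $\beta_1 = -\log \eta / \log \lambda^{-1}$ and an additive $\boldsymbol\gm \boldsymbol\om_g(\rho)$ term. The main technical point is the self-consistency of the intrinsic scaling: because $\theta = \boldsymbol{\widetilde\om}^{2-p}$, the cylinder $Q_\rho(\theta)$ need not sit inside $Q_\rho$, and this is precisely what the hypothesis $\essosc_{Q_{\sigma\rho}(\theta) \cap E_T} w \le \boldsymbol{\widetilde\om}$ is designed to circumvent; one must verify at each iteration step that the rescaled oscillation still defines a cylinder on which the energy estimate applies and on which the comparison with $g$ remains valid. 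This bookkeeping, together with the distinction between $p > 2$ and $1 < p < 2$ that is already encoded in the monograph \cite{DB}, is the principal obstacle, but it is formally identical to what underlies the interior Proposition~\ref{Prop:5:1}.
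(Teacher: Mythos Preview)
The paper does not prove Proposition~\ref{Prop:7:2}; it is recorded as a known boundary estimate for the parabolic $p$-Laplacian and simply referenced to \cite{DB}, so there is no in-paper argument to compare against beyond that citation.

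Your outline is in broad strokes the correct strategy and is essentially what one finds in \cite{DB}, but two points need correction. First, you invoke Proposition~\ref{Prop:2:3}, which is the energy estimate for the doubly nonlinear equation \eqref{Eq:1:1} with the $\mathfrak g_\pm$ time part; since $w$ satisfies the standard parabolic $p$-Laplacian \eqref{Eq:5:6}--\eqref{Eq:5:7}, the relevant energy inequality is the usual one with $(w-k)_\pm^2$ in the sup term, not $\mathfrak g_\pm$. Second, and more substantively, the geometric density \eqref{geometry} replaces only the time-propagation step (the analogue of Lemma~\ref{Lm:3:1}): it delivers a lower bound $\alpha_\ast$ on the measure of the good set at every time slice, but $\alpha_\ast\in(0,1)$ is in general far smaller than the $1-\nu$ threshold required to trigger the DeGiorgi iteration of Lemma~\ref{Lm:3:3}. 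A shrinking step via DeGiorgi's isoperimetric inequality applied slicewise, as in Lemma~\ref{Lm:3:2}, is still needed before Lemma~\ref{Lm:3:3} can close. The paper itself makes this explicit in the discussion surrounding \eqref{Eq:7:3}, where it says the proofs of \emph{both} Lemma~\ref{Lm:3:2} and Lemma~\ref{Lm:3:3} must be carried over to the boundary situation. With these two fixes your sketch aligns with the argument in \cite{DB}.
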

We refrain from further elaboration due to the similarity of the arguments.
The proof may be concluded as in the previous section.
\begin{remark}\upshape
We have omitted actual computations due to similarities with the proof of interior regularity.
Nevertheless the conclusions in the interior, such as Proposition~\ref{Prop:1:1}, can be applied directly to the current situation
without repeating their proofs, thanks to their emphasis on the distinct roles of sub-solutions and super-solutions,
provided we can extend $u$ properly to the outside of $E_T$ and generate
sub(super)-solutions across the lateral boundary.
In this regard, we refer to Lemma~\ref{Lm:A:2} for such extensions.
\end{remark}
\subsection{Proof of Theorem~\ref{Thm:1:4}}
First of all, we observe that the proof of interior regularity (Theorem~\ref{Thm:1:1}) 
consists of two main components, namely, the expansion of positivity (Proposition~\ref{Prop:1:1}) which is based solely on
the energy estimates in Proposition~\ref{Prop:2:1} and a corresponding
H\"older estimate for solutions to the parabolic $p$-Laplacian type equation.

This observation is also essentially the gist in the proofs of Theorem~\ref{Thm:1:2} -- \ref{Thm:1:3}.
Similar calculations have to be reproduced 
mainly due to the variant energy estimates in Proposition~\ref{Prop:2:2} -- \ref{Prop:2:3}
that have incorporated either initial data or Dirichlet data.
In particular, a key ingredient -- the Sobolev imbedding (cf. \cite[Chapter I, Proposition~3.1]{DB}) -- was used in all
these situations, assuming the functions $(u-k)_{\pm}\z^p$ vanish on the lateral boundary of the domain of integration.
This assumption in turn is fulfilled either by choosing a proper cutoff function $\z$ 
or by restricting the value of the level $k$ according to the Dirichlet data as in \eqref{Eq:3:3}
or the initial data as in \eqref{Eq:3:2}.

The main difference in the current situation lies in that such a Sobolev imbedding cannot be used
because in general the functions $(u-k)_{\pm}\z^p$ under conditions of Proposition~\ref{Prop:2:4} do not vanish on $S_T$.
However, a similar Sobolev imbedding (cf. \cite[Chapter. I, Proposition 3.2]{DB})
 that does not require functions to vanish on the boundary
still holds for the functional space
\begin{equation*}  
	u\in C\big(0,T;L^p(E)\big)\cap L^p\big(0,T; W^{1,p}(E)\big).
\end{equation*}
It is remarkable that the imbedding constant now depends on $N$, 
the structure of $\pl E$ and the ratio $T/|E|^{\frac{p}N}$,
which is invariant for cylinders of the type $Q_\rho=K_{\rho}\times(-\rho^p,0]$
and $Q_\rho\cap E_T$ as well, provided $\pl E$ is smooth enough.

As an example,  we exhibit in the following how to modify the proof of Lemma~\ref{Lm:3:3} technically.
Based on Proposition~\ref{Prop:2:4} and under the similar notations in Lemma~\ref{Lm:3:3}, 
with the interior cylinders replaced by their intersection with $E_T$,
the energy estimate \eqref{Eq:sample} becomes, assuming $|\boldsymbol\mu^-|\le 8M$,
\begin{equation*}
\begin{aligned}
	\frac{M^{p-2}}{2^{p(n+3)}} &\essup_{-\theta\tilde{\varrho}_n^p<t<0}
	\int_{\widetilde{K}_n\cap E} (u-\tilde{k}_n)_-^2\,\dx
	+
	\iint_{\widetilde{Q}_n\cap E_T}|D(u-\tilde{k}_n)_-|^p \,\dx\dt\\
	&\le
	\boldsymbol\gm \frac{2^{pn}}{\varrho^p}M^{p}|A_n|+\boldsymbol\gm_* |A_n|,
\end{aligned}
\end{equation*}
where we have abbreviated
\[
A_n=\{u<k_n\}\cap Q_n\cap E_T.
\]
The term with $\boldsymbol\gm_*$ comes from the extra term generated by the Neumann datum $\psi$
and $\boldsymbol\gm_*$ depends on $C_2$ through \eqref{N-data}. We may assume that
the first term on the right dominates the second, for otherwise we would have
$M\le(\frac{\boldsymbol\gm_*}{\boldsymbol\gm})^\frac1p\rho$. When $p>2$, 
the first integral on the left may be estimated from below by
\[
\int_{\widetilde{K}_n\cap E} (u-\tilde{k}_n)_-^2\,\dx\ge M^{2-p}\int_{\widetilde{K}_n\cap E} (u-\tilde{k}_n)_-^p\,\dx.
\]
When $1<p<2$, we introduce $\hat{k}_n=\frac34 k_{n+1}+\frac14 k_n<\tilde{k}_n$ and estimate
\[
\int_{\widetilde{K}_n\cap E} (u-\tilde{k}_n)_-^2\,\dx\ge (\tilde{k}_n-\hat{k}_n)^{2-p}\int_{\widetilde{K}_n\cap E} (u-\hat{k}_n)_-^p\,\dx
=\left(\frac{M}{2^{n+4}}\right)^{2-p}\int_{\widetilde{K}_n\cap E} (u-\hat{k}_n)_-^p\,\dx.
\]
In all cases, the energy estimate becomes
\begin{equation*}
\begin{aligned}
	\frac{1}{2^{3p+2n}} \essup_{-\theta\tilde{\varrho}_n^p<t<0}
	\int_{\widetilde{K}_n\cap E} (u-\hat{k}_n)_-^p\,\dx
	+
	\iint_{\widetilde{Q}_n\cap E_T}|D(u-\hat{k}_n)_-|^p \,\dx\dt
	\le
	\boldsymbol\gm \frac{2^{pn}}{\varrho^p}M^{p}|A_n|.
\end{aligned}
\end{equation*}
Then one may proceed to use the previously mentioned Sobolev imbedding 
(cf. \cite[Chapter. I, Proposition 3.2]{DB}) to establish a recursive inequality
of fast geometric convergence for $\boldsymbol{Y}_n=|A_n|/|Q_n\cap E_T|$
as in Lemma~\ref{Lm:3:3}.

As shown above, whenever we use the energy estimate in Proposition~\ref{Prop:2:4},
 the extra term containing $C_2$ is always absorbed into
other terms via the assumption $M>(\frac{\boldsymbol\gm_*}{\boldsymbol\gm})^\frac1p\rho$,
and as such it contributes in the proof of H\"older regularity only by
an extra control on the oscillation via $\boldsymbol\om\le \boldsymbol\gm \rho$
with $\boldsymbol\gm$ depending also on $C_2$. This remark also holds for the proof
of a result like Lemma~\ref{Lm:3:1}.

Finally, we remark that the use of De Giorgi's isoperimetric inequality (cf. \cite[Chapter I, Lemma~2.2]{DB})
is permitted for all convex domains. This is not restrictive in our case upon a local flattening of $\pl E$. 
In other words, since $\pl E$ is of class $C^1$, the portion of $\pl E$ within $K_R(x_o)$
 can be represented in a local coordinate system as part of the hyperplane $x_N=0$
 and $K_R(x_o)\cap E\subset \{x_N>0\}$.
 Set
 $K_R^+:=K_R\cap\{x_N>0\}$ and $Q_{R,S}^+:=Q_{R,S}\cap\{x_N>0\}$.
 Without loss of generality we may assume that the weak formulation in Section~\ref{S:1:4:4}
is written in such a coordinate system. Consequently, the energy estimate in Proposition~\ref{Prop:2:4}
is written with $K_R(x_o)\cap E$ and $Q_{R,S}\cap E_T$ replaced by $K_R^+$ and $Q_{R,S}^+$ respectively.
Thus the machinery used in Lemma~\ref{Lm:3:2} can be reproduced
with modifications as indicated above.

As usual, another main component of the induction argument will be a corresponding result
from the regularity theory for the parabolic $p$-Laplacian type equation (cf. \cite{DB}),
which we record in the following.
\begin{proposition}\label{Prop:7:3}
Let $p>1$ and $\pl E$ be of class $C^1$.
Suppose $w$ is a bounded, local, weak solution to 
\eqref{Eq:5:6} -- \eqref{Eq:5:7} in $Q_\rho\cap E_T$
 with the Neumann datum $\psi$ on $Q_\rho\cap S_T$ taken in the sense defined in Section~\ref{S:1:4:4}.
Assume $\psi$ satisfies \eqref{N-data} and define
$$
	\boldsymbol{\widetilde{\om}}=\essosc_{Q_\rho\cap E_T} w.
$$
If for some  constant $\sig$ in $(0,1)$, there holds
\begin{equation}
	\essosc_{Q_{\sig\varrho}(\theta)\cap E_T}w\le\boldsymbol{\widetilde{\om}}
	\quad
	\text{ where }
	\theta=\boldsymbol{\widetilde{\om}}^{2-p}.
\end{equation}
then, there exist constants $\beta_1$ in $(0,1)$ and $\boldsymbol\gm>1$ depending only on the data
$N,p,\widetilde C_o, \widetilde C_1, C_2, \sig$ and the structure of $\pl E$, such that
for all $0<r<\rho$, we have
\[
	\essosc_{Q_r(\theta)\cap E_T}w\le\boldsymbol\gm\boldsymbol{\widetilde{\om}}\left(\frac{r}{\rho}\right)^{\be_1}.
\]
\end{proposition}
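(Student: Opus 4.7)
The plan is to adapt DiBenedetto's intrinsic scaling framework for the degenerate/singular parabolic $p$-Laplacian (\cite{DB86,ChenDB92}, see also \cite{DB}) to the Neumann setting, exactly in parallel with Proposition~\ref{Prop:5:1} (interior) and Proposition~\ref{Prop:7:2} (Dirichlet). The principal building block is the Neumann-type energy estimate in Proposition~\ref{Prop:2:4}; once this is in hand, the oscillation decay on intrinsic cylinders $Q_{\rho}(\theta)\cap E_T$ with $\theta=\boldsymbol{\widetilde\om}^{2-p}$ is obtained by the standard alternative analysis (De~Giorgi--Ladyzhenskaya--Ural'tseva machinery), which we run in two steps: either $w$ is pointwise close to $\boldsymbol\mu^+-\tfrac12\boldsymbol{\widetilde\om}$ on a large portion of $Q_{\sigma\varrho}(\theta)\cap E_T$, in which case a De Giorgi iteration reduces the oscillation near the supremum; or $w$ is below that level on a large portion, in which case the measure information first propagates in time (as in Lemma~\ref{Lm:3:1}), is then shrunk (as in Lemma~\ref{Lm:3:2}), and a final De~Giorgi-type argument (as in Lemma~\ref{Lm:3:3}) reduces the oscillation near the infimum.

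The key technical difference is the presence of the extra boundary integral coming from the Neumann datum. As already carried out in the derivation of Proposition~\ref{Prop:2:4}, this term is controlled via the slice-wise trace inequality and Young's inequality, yielding a contribution of the form
\begin{equation*}
\boldsymbol\gamma C_2^{\frac{p}{p-1}}\iint_{Q_{R,S}\cap E_T}\z^p\chi_{\{(u-k)_{\pm}>0\}}\,\dx\dt,
\end{equation*}
while the bad gradient term is absorbed into the good one on the left. In each De~Giorgi step this translates into an additive term that can be dominated by the other terms, provided the oscillation satisfies $\boldsymbol{\widetilde\om}\ge \boldsymbol\gamma\,\varrho$; otherwise the conclusion is already trivial. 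Iterating the resulting recursive inequality $\boldsymbol Y_{n+1}\le \boldsymbol\gamma\boldsymbol b^n \boldsymbol Y_n^{1+\frac{p}{N+2}}$ via \cite[Chap.~I, Lemma~4.1]{DB} yields the pointwise reduction, and iterating the resulting oscillation estimate $\essosc_{Q_{\sigma\varrho}(\theta)\cap E_T}w\le (1-\eta)\boldsymbol{\widetilde\om}+\boldsymbol\gamma\varrho$ over a geometric sequence of radii produces the asserted H\"older decay, absorbing the $\varrho$-term into the $(r/\rho)^{\beta_1}$-term.

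The hard part is twofold. First, the Sobolev embedding \cite[Chap.~I, Prop.~3.1]{DB} used in the interior cannot be applied here since $(u-k)_\pm\z^p$ need not vanish on $\pl E$; instead one appeals to the Neumann-type embedding \cite[Chap.~I, Prop.~3.2]{DB}, whose imbedding constant depends on $N$, on the structure of $\pl E$, and on the ratio $T/|E|^{p/N}$, which is scale-invariant under the intrinsic cylinders in question. Second, De~Giorgi's isoperimetric inequality \cite[Chap.~I, Lemma~2.2]{DB} requires convexity of the base; this is handled by the $C^1$-flattening of $\pl E$ in a local coordinate chart, so that $K_R(x_o)\cap E$ is replaced by the half-cube $K_R^+=K_R\cap\{x_N>0\}$ and $Q_{R,S}\cap E_T$ by $Q_{R,S}^+$, after which all the arguments of Lemmas~\ref{Lm:3:1}--\ref{Lm:3:3} can be transcribed line by line with the modifications sketched above. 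Once the reduction of oscillation is established, the conclusion follows by the standard iteration over shrinking intrinsic cylinders, exactly as in the proof of Proposition~\ref{Prop:5:1}.
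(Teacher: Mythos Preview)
Your sketch is essentially correct and aligns with the approach in \cite{DB}. Note, however, that the paper does \emph{not} supply its own proof of Proposition~\ref{Prop:7:3}: it is merely recorded as a known result from the regularity theory for the parabolic $p$-Laplacian with Neumann data, with a reference to \cite{DB}. The modifications you outline---replacing the zero-trace Sobolev embedding \cite[Chap.~I, Prop.~3.1]{DB} by the version \cite[Chap.~I, Prop.~3.2]{DB} that does not require vanishing on the boundary, local $C^1$-flattening of $\partial E$ so that De~Giorgi's isoperimetric inequality can be applied on half-cubes, and absorbing the extra Neumann contribution $\boldsymbol\gamma C_2^{p/(p-1)}|A_n|$ into the dominant term via the alternative $\boldsymbol{\widetilde\omega}\le\boldsymbol\gamma\varrho$---are exactly the ones the paper spells out in its discussion preceding the proposition (for the doubly nonlinear equation), and they are precisely what is needed for the $p$-Laplacian case as well.

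One minor point of care: you cite Proposition~\ref{Prop:2:4} as the basic energy estimate, but that proposition is written for the doubly nonlinear operator $\partial_t(|u|^{p-2}u)$, with $\mathfrak g_\pm$ in place of $\tfrac12(w-k)_\pm^2$. For equation \eqref{Eq:5:6}--\eqref{Eq:5:7} the time part is linear in $w$, so the relevant energy estimate is the simpler, standard one for the parabolic $p$-Laplacian with Neumann data (still derived by the same trace-inequality argument). This does not affect the logic of your argument, but you should state and use the correct version rather than Proposition~\ref{Prop:2:4} verbatim.
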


\appendix

\section{More on the Notion of Parabolicity}\label{Append:1}
\begin{lemma}\label{Lm:A:1}
Let $u$ be a local weak sub(super)-solution to \eqref{Eq:1:1} -- \eqref{Eq:1:2p}.
Then, 
for any $k\in\rr$, the truncation $k\pm(u-k)_\pm$
is a local weak sub(super)-solution to \eqref{Eq:1:1} -- \eqref{Eq:1:2p}.
\end{lemma}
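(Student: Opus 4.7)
The plan is to prove the sub-solution case for $v := k + (u-k)_+ = \max\{u, k\}$; the super-solution case for $k - (u-k)_- = \min\{u,k\}$ is entirely analogous (test with $\zeta \cdot \theta_\epsilon(k-u)$ instead and note that the inequality reverses). Let $\zeta \ge 0$ be an admissible test function for $v$ as in Section~\ref{S:1:2:1}. The key idea is to test the mollified weak formulation of the equation for $u$ with a function of the form $\varphi_{h,\epsilon} := \zeta \, \theta_\epsilon(w_h - k)$, where $\power{w_h}{p-1} := \llbracket \power{u}{p-1}\rrbracket_h$ is the mollification used in the proof of Proposition~\ref{Prop:2:1}, and $\theta_\epsilon : \mathbb{R} \to [0,1]$ is a smooth non-decreasing approximation of $\chi_{\{s>0\}}$ with $\theta_\epsilon(s) = 0$ for $s \le 0$ and $\theta_\epsilon(s) = 1$ for $s \ge \epsilon$. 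Since $w_h$ is smooth in time by construction, $\varphi_{h,\epsilon}$ is admissible.

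For the time term, define
\[
F_\epsilon(s) := (p-1) \int_k^s |\tau|^{p-2} \theta_\epsilon(\tau - k)\, d\tau.
\]
Using the chain-rule identity $\partial_t \power{w_h}{p-1}\,\theta_\epsilon(w_h - k) = \partial_t F_\epsilon(w_h)$ followed by integration by parts in time (in the same vein as in the proof of Proposition~\ref{Prop:2:1}), this term becomes $\int F_\epsilon(w_h)\,\zeta\,dx\big|_{t_1}^{t_2} - \iint F_\epsilon(w_h)\,\zeta_t \,dx\,dt$. As $h \downarrow 0$ the standard convergences of $\llbracket \cdot \rrbracket_h$ combined with Lemma~\ref{lem:Acerbi-Fusco} yield $w_h \to u$ pointwise a.e.\ (up to a subsequence) and in $L^p_{\rm loc}$, so $F_\epsilon(w_h) \to F_\epsilon(u)$ by dominated convergence. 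For the diffusion term, after sending $h \downarrow 0$ one obtains
\[
\iint \mathbf{A}(x,t,u,Du)\cdot\bigl[D\zeta\cdot\theta_\epsilon(u-k) + \zeta\,\theta_\epsilon'(u-k)\, Du\bigr]\,dx\,dt,
\]
and the second summand is non-negative by \eqref{Eq:1:2p}$_1$, since $\zeta \ge 0$ and $\theta_\epsilon' \ge 0$. Hence it may be moved to the right-hand side, preserving the inequality $\le 0$.

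It remains to send $\epsilon \downarrow 0$. Since $\theta_\epsilon(u-k) \to \chi_{\{u > k\}}$ boundedly and pointwise, a direct computation gives $F_\epsilon(u) \to (|u|^{p-2}u - |k|^{p-2}k)\chi_{\{u > k\}} = |v|^{p-2}v - |k|^{p-2}k$ in $L^1_{\rm loc}$. The constant $|k|^{p-2}k$ part telescopes out of the resulting time term via the fundamental theorem of calculus applied to $\zeta$, leaving precisely $\int |v|^{p-2}v\,\zeta\,dx\big|_{t_1}^{t_2} - \iint |v|^{p-2}v\,\zeta_t\,dx\,dt$. On the diffusion side, $\mathbf{A}(x,t,u,Du)\,\chi_{\{u > k\}} = \mathbf{A}(x,t,v,Dv)$ a.e., using $Dv = Du\,\chi_{\{u > k\}}$ (Stampacchia's truncation lemma) together with $\mathbf{A}(x,t,k,0) = 0$, which follows from \eqref{Eq:1:2p}$_2$ by setting $\zeta = 0$.

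The main technical obstacle is the clean passage $h \downarrow 0$ in the diffusion term, because $Dw_h$ is not the time-mollification of $Du$ (rather, $w_h$ is obtained from mollifying $\power{u}{p-1}$ and then inverting the signed power). This is handled in the same way as in Proposition~\ref{Prop:2:1}: one exploits the weak convergence $\llbracket \mathbf{A}(\cdot,\cdot,u,Du)\rrbracket_h \rightharpoonup \mathbf{A}(\cdot,\cdot,u,Du)$ in $L^{p/(p-1)}_{\rm loc}$, combined with the strong convergence of $D\varphi_{h,\epsilon}$ controlled via Lemma~\ref{lem:Acerbi-Fusco}, and uses that the problematic ``sign'' term $\mathbf{A}\cdot Du\,\zeta\,\theta_\epsilon'$ has the favorable sign and need not be identified precisely — it suffices to know it is non-negative.
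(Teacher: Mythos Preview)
Your overall strategy---approximate the characteristic function $\chi_{\{u>k\}}$ by a smooth monotone $\theta_\epsilon$, pass to the limit $h\downarrow 0$ and then $\epsilon\downarrow 0$---is the right one, and the handling of the $\epsilon\downarrow 0$ limit is fine. But there is a genuine gap in the choice of test function and the $h\downarrow 0$ passage in the diffusion term.

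You test with $\varphi_{h,\epsilon}=\zeta\,\theta_\epsilon(w_h-k)$, i.e.\ with $w_h$ inside the cut-off. This makes the chain rule in the time term immediate, but it forces $Dw_h$ into the diffusion term. Two problems arise. First, admissibility: you need $\varphi_{h,\epsilon}\in L^p_{\rm loc}(0,T;W^{1,p}_o(K))$, hence $\theta_\epsilon'(w_h-k)\,Dw_h\in L^p$. Since $w_h=\power{(\llbracket\power{u}{p-1}\rrbracket_h)}{1/(p-1)}$, this requires $\power{u}{p-1}$ to have a weak spatial gradient in a useful Lebesgue space; for $1<p<2$ the formal expression $(p-1)|u|^{p-2}Du$ is not controlled, so $w_h$ need not lie in $W^{1,p}_{\rm loc}$. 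Second, even when $Dw_h$ makes sense, the term $\llbracket\mathbf{A}\rrbracket_h\cdot\zeta\,\theta_\epsilon'(w_h-k)\,Dw_h$ has no sign before the limit (ellipticity gives $\mathbf{A}(x,t,u,Du)\cdot Du\ge 0$, not $\llbracket\mathbf{A}\rrbracket_h\cdot Dw_h\ge 0$), and you have not shown $Dw_h\to Du$ in any sense. Your appeal to Proposition~\ref{Prop:2:1} is misplaced: there the test function is $\zeta^p\psi_\epsilon(u-k)_+$, built from $u$, so no $Dw_h$ ever appears in the diffusion term.

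The fix---and the paper's route---is to put $u$, not $w_h$, inside the cut-off: test with $\zeta\,\theta_\epsilon(u-k)$ (the paper uses the equivalent $\zeta^p\psi_\epsilon\frac{(u-k)_+}{(u-k)_++\sigma}$). Then the diffusion term passes to the limit $h\downarrow 0$ trivially, and the favourable sign of $\mathbf{A}\cdot Du\,\zeta\,\theta_\epsilon'(u-k)$ is available immediately. The price is that the time term no longer yields to a direct chain rule; instead one writes
\[
\partial_t\power{w_h}{p-1}\,\theta_\epsilon(u-k)
=\partial_t\power{w_h}{p-1}\,\theta_\epsilon(w_h-k)
+\partial_t\power{w_h}{p-1}\,\big[\theta_\epsilon(u-k)-\theta_\epsilon(w_h-k)\big].
\]
The first summand equals $\partial_t F_\epsilon(w_h)$ as you computed. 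The second is non-negative because $\partial_t\power{w_h}{p-1}=\tfrac1h(\power{u}{p-1}-\power{w_h}{p-1})$ and both $s\mapsto\power{s}{p-1}$ and $\theta_\epsilon$ are non-decreasing, so it can be dropped in the sub-solution inequality. This is exactly the mechanism used in the proof of Proposition~\ref{Prop:2:1}.
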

\begin{proof}
Without loss of generality, let $u$ be a local weak sub-solution to \eqref{Eq:1:1} -- \eqref{Eq:1:2p}.
We show that $k+(u-k)_+$
is a local weak sub-solutions to \eqref{Eq:1:1} -- \eqref{Eq:1:2p}.
Write down the mollified equation \eqref{mol-eq} and follow the introduction
of $Q_{R,S}\Subset E_T$, the function $w_h$ and the piecewise smooth functions $\z$ and $\psi_{\eps}$ in the proof of 
Proposition~\ref{Prop:2:1}. Instead of \eqref{Test}, we choose here, for some $\sig>0$, the test function
\begin{equation}\label{Test1}
	Q_{R,S}\ni(x,t)\mapsto
	\varphi(x,t) 
	= 
	\zeta^p(x,t)\psi_\varepsilon(t) \frac{\big(u(x,t)-k\big)_+}{\big(u(x,t)-k\big)_++\sig}. 
\end{equation}
Like in the proof of Proposition~\ref{Prop:2:1}, we treat the various terms in \eqref{mol-eq}.
First of all, we consider the time part. We have
\begin{align*}
	\iint_{E_T} 
	\partial_t \power{w_h}{p{-}1} \varphi \,\dx\dt 
	&= 
	\iint_{Q_{R,S}}
	\zeta^p  \psi_\varepsilon \partial_t \power{w_h}{p{-}1} \frac{(w_h-k)_+}{(w_h-k)_++\sig}
	\dx\dt \\
	&\phantom{=\,} +
	\iint_{Q_{R,S}}
	\zeta^p  \psi_\varepsilon \partial_t \power{w_h}{p{-}1} 
	\bigg[ \frac{(u-k)_+}{(u-k)_++\sig}-\frac{(w_h-k)_+}{(w_h-k)_++\sig}\bigg]
	\dx\dt \\
	&\ge 
	\iint_{Q_{R,S}}
	\zeta^p  \psi_\varepsilon 
	\partial_t \mathfrak h_+ (w_h,\sig,k)  \dx\dt \\
	& = 
	- \iint_{Q_{R,S}} 
	\big(\zeta^p  \psi_\varepsilon'+ \psi_\varepsilon\partial_t\zeta^p\big) \mathfrak h_+ (w_h,\sig,k) 
	\dx\dt,
\end{align*}
where we have defined
\[
	 \mathfrak h_+ (u,\sig,k)
	 \df{=} 
	 (p-1)\bigg[\power{k}{p{-}1} + 
	 \int_k^{u}\frac{|s|^{p-2}(s-k)_+}{(s-k)_++\sig}\d s\bigg].
\]
Note that $\lim_{\sig\downarrow0} \mathfrak h_+(u(x,t),\sig,k)=\power{[k+(u-k)_+]}{p{-}1}$.
As in the proof of Proposition~\ref{Prop:2:1}, we have used the fact that the second line in the above estimates
has a non-negative contribution, due to \eqref{dt-wh} and 
the fact that the map 
\[
	s\mapsto\frac{(s^{\frac1{p-1}}-k)_+}{(s^{\frac1{p-1}}-k)_++\sig}
\]
 is a monotone increasing function. 
 We now send $h\downarrow0$
and then $\eps\downarrow0$, as in the proof of Proposition~\ref{Prop:2:1}, to obtain
 \begin{align*}
	\lim_{\eps\downarrow0}&\bigg(\liminf_{h\downarrow0}\iint_{E_T} 
	\partial_t \power{w_h}{p{-}1} \varphi \,\dx\dt \bigg)\\
	&\ge
	\int_{K_R} \zeta^p(x,t_1) \mathfrak h_+(u(x,t_1),\sig,k)\,\dx 
	-
	\int_{K_R} \zeta^p(x,t_2) \mathfrak h_+(u(x,t_2),\sig, k)\,\dx\\
	&\quad-
	\iint_{K_R\times (t_1,t_2)} \partial_t\zeta^p\mathfrak h_+(u,\sig, k)\,\dx\dt.
\end{align*}
Next, we consider the diffusion term.
To this end, we again send $h\downarrow0$
and then $\eps\downarrow0$, and use \eqref{Eq:1:2p}$_1$ to obtain
\begin{align*}
	\lim_{\eps\downarrow 0}&\bigg(\lim_{h\downarrow 0}
	\iint_{E_T} 
	\llbracket\mathbf A(x,t,u,Du)\rrbracket_h\cdot D\varphi\, \dx\dt \bigg)\\
	&=\iint_{K_R\times(t_1,t_2)}
	\mathbf A(x,t,u,Du)\cdot \bigg[D\z^p\frac{(u-k)_+}{(u-k)_++\sig}
	+\z^p\frac{\sigma D(u-k)_+}{\big((u-k)_++\sig\big)^2}\bigg]\, \dx\dt \\
	&\ge\iint_{K_R\times(t_1,t_2)}
	\mathbf A(x,t,u,Du)\cdot D\z^p\frac{(u-k)_+}{(u-k)_++\sig}\, \dx\dt.
\end{align*}
Combining all above estimates gives
\begin{align*}
\int_{K_R} &\zeta^p(x,t) \mathfrak h_+(u(x,t),\sig,k)\,\dx \bigg|^{t_2}_{t_1}
	-
	\iint_{K_R\times (t_1,t_2)} \partial_t\zeta^p\mathfrak h_+(u,\sig, k)\,\dx\dt\\
	&+\iint_{K_R\times(t_1,t_2)}
	\mathbf A(x,t,u,Du)\cdot D\z^p\frac{(u-k)_+}{(u-k)_++\sig}\, \dx\dt\le0.
\end{align*}
Finally we send $\sig\downarrow0$ to finish the proof.
\end{proof}
The above Lemma~\ref{Lm:A:1} has an analog near the lateral boundary $S_T$.
Suppose  $u$ is a sub(super)-solution to \eqref{Dirichlet}.
The cylinder $Q_{R,S}=K_R(x_o)\times(t_o-S,t_o)$ has its vertex $(x_o,t_o)$
attached to $S_T$ and the level $k$ satisfies \eqref{Eq:3:3}. We define
the following truncated extension of $u$ in $Q_{R,S}$:
\begin{equation*}
u_k^{\pm}\df{=}\left\{
\begin{array}{cl}
k\pm(u-k)_\pm\quad&\text{ in }Q_{R,S}\cap E_T,\\[5pt]
k\quad&\text{ in }Q_{R,S}\setminus E_T.
\end{array}\right.
\end{equation*}
An extension of $\bl{A}$ can be defined as
\begin{equation*}
\widetilde{\mathbf A}(x,t,u,\z)\df{=}\left\{
\begin{array}{cl}
\mathbf A(x,t,u,\z)\quad&\text{ in }Q_{R,S}\cap E_T,\\[5pt]
|\z|^{p-2}\z\quad&\text{ in }Q_{R,S}\setminus E_T.
\end{array}\right.
\end{equation*}
In this way, $\widetilde{\mathbf A}$ is a Caratheodory function satisfying \eqref{Eq:1:2p} with $C_o$
and $C_1$ replaced by $\min\{1,C_o\}$ and $\max\{1,C_1\}$ respectively.
Furthermore, we have
\begin{lemma}\label{Lm:A:2}
Suppose $u$ is a sub(super)-solution to \eqref{Dirichlet} with \eqref{Eq:1:2p}
and the level $k$ satisfies \eqref{Eq:3:3}.
Let $u_k^{\pm}$ be defined as above.
Then $u^{\pm}_k$ is a local weak sub(super)-solution to \eqref{Eq:1:1} with $\widetilde{\mathbf A}$
in $Q_{R,S}$.
\end{lemma}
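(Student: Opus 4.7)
The plan is to adapt the proof of Lemma~\ref{Lm:A:1} to the present Dirichlet situation, with the vanishing trace of $(u-k)_+$ on $\pl E\cap K_R(x_o)$ playing the key role of transporting a bulk weak inequality across $\pl E$ inside $Q_{R,S}$. We focus on the sub-solution case; the super-solution case is symmetric with $(u-k)_-$ in place of $(u-k)_+$. As a preparatory step, I would first establish the correct function space. Since $u$ attains $g$ on $S_T$ in the sense that $(u-g)_+\in L^p(0,T;W^{1,p}_o(E))$, and since \eqref{Eq:3:3} forces $g\le k$ on $Q_{R,S}\cap S_T$, a standard trace argument yields that $(u-k)_+$ has vanishing trace on $\pl E\cap K_R(x_o)$ for a.e.~$t\in(t_o-S,t_o]$. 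Extending $(u-k)_+$ by zero across $\pl E\cap K_R(x_o)$ therefore produces a function in $L^p_{\loc}(t_o-S,t_o;W^{1,p}_{\loc}(K_R(x_o)))$, so that $u^{+}_k=k+(u-k)_+$ belongs to the functional class \eqref{Eq:1:3p} on $Q_{R,S}$.

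The heart of the proof will be a Dirichlet-adapted replay of the mollification argument of Lemma~\ref{Lm:A:1}. Fixing a compact $\widetilde K\subset K_R(x_o)$, a sub-interval $[t_1,t_2]\subset(t_o-S,t_o]$ and a non-negative piecewise smooth cutoff $\zeta$ vanishing on $\pl\widetilde K\times(t_o-S,t_o)$, I would test the mollified weak form of the Dirichlet problem against
\[
\varphi(x,t)=\zeta^p(x,t)\,\psi_\varepsilon(t)\,\frac{(u(x,t)-k)_+}{(u(x,t)-k)_++\sigma},
\]
with $\psi_\varepsilon$ as in Lemma~\ref{Lm:A:1}. The factor $(u-k)_+/((u-k)_++\sigma)$ inherits vanishing trace on $\pl E\cap K_R(x_o)$ from $(u-k)_+$, while $\zeta^p$ vanishes on $\pl K_R(x_o)$; together they force $\varphi$ into $L^p_{\loc}(0,T;W^{1,p}_o(E))$, so $\varphi$ is admissible. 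Sending $h\downarrow 0$, $\varepsilon\downarrow 0$ and $\sigma\downarrow 0$ in this order, exactly as in Lemma~\ref{Lm:A:1}, one uses $\mathfrak h_+(u,\sigma,k)\to |u^{+}_k|^{p-2}u^{+}_k$ for the parabolic term and discards the non-negative contribution of the term involving $D((u-k)_+/((u-k)_++\sigma))$ in the diffusion part, its sign being forced by \eqref{Eq:1:2p}$_1$. Observing that $\chi_{\{u>k\}}\mathbf A(x,t,u,Du)=\mathbf A(x,t,u^{+}_k,Du^{+}_k)$ on $E_T$ (since on $\{u\le k\}$ the right-hand side vanishes by \eqref{Eq:1:2p}$_2$ because $Du^{+}_k=0$ there), the limit inequality reads
\begin{align*}
\int_{\widetilde K\cap E}&|u^{+}_k|^{p-2}u^{+}_k\zeta^p\,\dx\Big|_{t_1}^{t_2}\\
&+\iint_{(\widetilde K\cap E)\times(t_1,t_2)}\Big[-|u^{+}_k|^{p-2}u^{+}_k\,\pl_t\zeta^p+\mathbf A(x,t,u^{+}_k,Du^{+}_k)\cdot D\zeta^p\Big]\dx\dt\le 0.
\end{align*}

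Finally, I would upgrade the integration from $\widetilde K\cap E$ to $\widetilde K$. Since $u^{+}_k\equiv k$ and $Du^{+}_k\equiv 0$ on $\widetilde K\setminus E$, the definition of $\widetilde{\mathbf A}$ gives $\widetilde{\mathbf A}(x,t,u^{+}_k,Du^{+}_k)\equiv 0$ there, and the fundamental theorem of calculus applied to the $t$-constant $|k|^{p-2}k$ yields the trivial identity
\[
\int_{\widetilde K\setminus E}|k|^{p-2}k\zeta^p\,\dx\Big|_{t_1}^{t_2}-\iint_{(\widetilde K\setminus E)\times(t_1,t_2)}|k|^{p-2}k\,\pl_t\zeta^p\,\dx\dt=0.
\]
Adding this to the previous display upgrades it to an inequality on $\widetilde K$ with $\widetilde{\mathbf A}$ in place of $\mathbf A$, which is exactly the sub-solution property in $Q_{R,S}$ demanded by Section~\ref{S:1:2:1}. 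The hard part is the admissibility verification in the previous paragraph: one must argue rigorously that $\varphi$ lies slice-wise in $W^{1,p}_o(E)$, which rests on the subtle interplay between the Dirichlet attainment of $g$ and the level restriction \eqref{Eq:3:3}. Once this is secured, the rest is a direct transcription of Lemma~\ref{Lm:A:1}'s limits combined with the trivial cancellation outside $E_T$.
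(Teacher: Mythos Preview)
Your proposal is correct and follows essentially the same route as the paper's proof, which is deliberately terse: it says only that the calculations mirror Lemma~\ref{Lm:A:1}, that the test function \eqref{Test1} remains admissible once extended by zero into $Q_{R,S}\setminus E_T$ thanks to the level restriction \eqref{Eq:3:3} (citing \cite[Lemma~2.1]{GLL}), and that consequently all integrals may be written over the full cylinder $Q_{R,S}$. You have unpacked precisely these points---the trace-vanishing of $(u-k)_+$ on $\partial E\cap K_R(x_o)$ giving admissibility, the identification $\chi_{\{u>k\}}\mathbf A(x,t,u,Du)=\mathbf A(x,t,u_k^+,Du_k^+)$ via \eqref{Eq:1:2p}$_2$, and the trivial cancellation on $\widetilde K\setminus E$---so your argument is a faithful and more explicit rendering of what the paper sketches. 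One small slip: you write that $\zeta^p$ vanishes on $\partial K_R(x_o)$, but you defined $\zeta$ to vanish on $\partial\widetilde K$; this is harmless since extending $\zeta$ by zero outside $\widetilde K$ achieves the same effect, but you may want to phrase it consistently.
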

\begin{proof}
The calculations are similar to the proof of Lemma~\ref{Lm:A:1}.
One only has to notice that due to our choice of $k$ satifying \eqref{Eq:3:3},
the test function \eqref{Test1} is still admissible if we extend it to zero 
in $Q_{R,S}\setminus E_T$ (cf. \cite[Lemma~2.1]{GLL}).
Notice also this extension does not require any smoothness of the lateral boundary $S_T$ {\it a priori}.
In this way, all the subsequent integrals are carried over into the whole $Q_{R,S}$.
\end{proof}
\begin{remark}\upshape
We point out that Lemma~\ref{Lm:A:2} exhibits an important character of parabolic equations.
So-extended sub(super)-solutions across the lateral boundary often play
a basic role in investigating the boundary regularity of solutions on rough domains.
See for instance \cite{GL, GLL} in this regard.
\end{remark}
\section{Harnack's Inequality}\label{Append:2}
First of all, let us rephrase Proposition~\ref{Prop:1:1} for non-negative super-solutions.
In such a case, $\boldsymbol \mu^-=0$ and local boundedness is not needed.
In addition, the either-or alternative does not appear.
\begin{proposition}\label{Prop:B:1}
	Let $u$ be a non-negative, local, weak super-solution to \eqref{Eq:1:1} -- \eqref{Eq:1:2p} in $E_T$.
	Suppose for some $(x_o,t_o)\in E_T$, $M>0$, $\al\in(0,1)$ and $\varrho>0$ we have \eqref{Eq:1:5} and
	\begin{equation*}
		\left|\left\{u(\cdot, t_o)\ge M\right\}\cap K_\varrho(x_o)\right|
		\ge
		\al \big|K_\varrho\big|.
	\end{equation*}
There exist constants $\dl$ and $\eta$ in $(0,1)$ depending only on the data and $\al$,
such that 
\begin{equation*}
	u\ge\eta M
	\quad
	\mbox{a.e.~in $K_{2\varrho}(x_o)\times\big(t_o+\dl(\tfrac12\varrho)^p,t_o+\dl\varrho^p\big].$}
\end{equation*}
\end{proposition}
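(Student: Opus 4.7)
The plan is to derive Proposition \ref{Prop:B:1} directly from the super-solution half of Proposition \ref{Prop:1:1} by specializing to $\boldsymbol\mu^-=0$. Since $u\ge 0$ on $E_T$, the value $\boldsymbol\mu^-=0$ is an admissible lower bound for the essential infimum of $u$ on any reference cylinder $\mathcal{Q}$ satisfying \eqref{Eq:1:5}; one may take $\mathcal{Q}$ to be any open neighborhood of the forward cylinder in \eqref{Eq:1:5} contained in $E_T$, with $\boldsymbol\mu^+$ and $\boldsymbol\om$ left formally unconstrained from above. Under this identification, the hypothesis
\[
\big|\{u(\cdot,t_o)\ge M\}\cap K_\varrho(x_o)\big|\ge \al\,|K_\varrho|
\]
coincides with the super-solution hypothesis
\[
\big|\{-(\boldsymbol\mu^--u(\cdot,t_o))\ge M\}\cap K_\varrho(x_o)\big|\ge \al\,|K_\varrho|
\]
of Proposition \ref{Prop:1:1}. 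That proposition then produces constants $\dl,\eta\in(0,1)$, depending only on the data and $\al$, such that either $|\boldsymbol\mu^-|>\xi M$ or $u\ge \eta M$ a.e.\ in $K_{2\varrho}(x_o)\times(t_o+\dl(\tfrac12\varrho)^p,t_o+\dl\varrho^p]$. Because $\boldsymbol\mu^-=0$ and $M>0$, the first alternative is vacuous for every admissible $\xi>0$, leaving only the desired pointwise lower bound.

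The single point that requires verification is that the local boundedness assumption appearing in Proposition \ref{Prop:1:1} is not actually needed on the super-solution side. Inspecting the proofs of Lemmas \ref{Lm:3:1}--\ref{Lm:3:3} in the super-solution case, the relevant truncations have the form $(u-k)_-$ with $k=\boldsymbol\mu^-+cM$ for some $c\in(0,1]$, and these satisfy $0\le(u-k)_-\le k-\boldsymbol\mu^-\le M$ solely by virtue of $u\ge\boldsymbol\mu^-$. Consequently the energy estimates, the shrinking argument, and the De Giorgi iteration in those lemmas use only a lower envelope of $u$ and never an upper one, so the entire super-solution branch of Proposition \ref{Prop:1:1} carries over verbatim even when $u$ is unbounded from above. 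This is the only subtlety; there is no genuine technical obstacle.

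In short, the either-or dichotomy characteristic of the signed setting collapses automatically under non-negativity, because the failure mode $|\boldsymbol\mu^-|>\xi M$ cannot occur when $\boldsymbol\mu^-=0$. The obstruction created by the power-like nonlinearity in the time derivative -- namely, that translations do not preserve the equation -- is therefore deactivated as soon as one is on the correct side of zero, and the expansion of positivity recovers the clean single-alternative form familiar from Harnack-type theories for nonlinear parabolic equations.
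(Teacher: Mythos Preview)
Your proposal is correct and follows exactly the route the paper takes: the paper itself presents Proposition~\ref{Prop:B:1} as a rephrasing of Proposition~\ref{Prop:1:1} for non-negative super-solutions, noting that with $\boldsymbol\mu^-=0$ the either-or alternative disappears and local boundedness is unnecessary. Your explanation of \emph{why} local boundedness can be dropped (only the lower envelope $u\ge\boldsymbol\mu^-$ enters through $(u-k)_-$ in Lemmas~\ref{Lm:3:1}--\ref{Lm:3:3}) is in fact more explicit than what the paper writes, which simply asserts these points in a one-sentence preamble.
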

It is worth mentioning that a similar remark as Remark~\ref{Rem:1.1}
also holds under current circumstance. 
The following Harnack's inequality has been shown in \cite{GV, KK, Trud0}.
However, we give an alternative proof based on Proposition~\ref{Prop:1:1}
and Theorem~\ref{Thm:1:1}.
\begin{theorem}\label{Thm:Harnack}
Let $u$ be non-negative, continuous, local weak solution to \eqref{Eq:1:1} -- \eqref{Eq:1:2p} in $E_T$.
Assume the set inclusion
\[
K_{2\rho}(x_o)\times(t_o-(2\rho)^p,t_o+(2\rho)^p]\Subset E_T.
\]
There exist $\theta \in (0,1)$ and $\boldsymbol\gm>1$ depending only on the data,
such that
\[
\boldsymbol\gm^{-1}\sup_{K_{\rho}(x_o)}u(\cdot,t_o-\theta\rho^p)\le u(x_o,t_o)\le 
\boldsymbol\gm\inf_{K_{\rho}(x_o)}u(\cdot,t_o+\theta\rho^p).
\]
\end{theorem}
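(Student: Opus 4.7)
\smallskip
\noindent\textbf{Proof plan for Theorem~\ref{Thm:Harnack}.}
The plan is to derive the full two-sided Harnack inequality from its right-hand half (the lower bound on the infimum at a later time). Indeed, once we have
\[
u(\bar x,\bar t)\le \boldsymbol \gm \inf_{K_\rho(\bar x)} u(\cdot,\bar t+\theta\rho^p)
\]
for every admissible $(\bar x,\bar t)$, applying it at every point of the slice $K_\rho(x_o)\times\{t_o-\theta\rho^p\}$ and using $x_o\in K_\rho(\bar x)$ after a routine covering/iteration in the spatial variable yields the missing left-hand inequality. Because the equation is homogeneous of degree $p-1$ in $u$ (so the scaling $u\mapsto\lambda u$ preserves both the structure \eqref{Eq:1:2p} and the constants), we may normalize $u(x_o,t_o)=1$, and after translation work at the origin.

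Next I would locate a small intrinsic cylinder around $(0,0)$ where $u$ is bounded below by a definite fraction of its value at the origin. Since $u$ is continuous (Theorem~\ref{Thm:1:1}), there exist data-dependent constants $\gm>0$ and $\be\in(0,1)$ and a scale $r_o$, comparable to a power of $1/\|u\|_{\infty, Q_{2\rho}}$ times $\rho$, such that
\[
u\ge\tfrac12\quad\text{on}\quad K_{r_o}\times(0,r_o^p].
\]
In particular this gives the measure-theoretical hypothesis of Proposition~\ref{Prop:B:1} at every time-slice in that interval. To push this input away from the unquantified $\|u\|_\infty$ and make the Harnack constants universal, I would combine the previous step with a standard \emph{sup-reaching} device: let $M_\sigma=\sup_{K_{\sigma\rho}}u(\cdot,0)$ and maximize $\phi(\sigma)=(1-\sigma)^\lambda M_\sigma$ for $\lambda$ tuned to $\be$. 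The maximizer $\bar\sigma$ produces a point $\bar x\in K_{\bar\sigma\rho}$ with $u(\bar x,0)\gtrsim M_{\bar\sigma}$, while $u(\cdot,0)\le 2^\lambda M_{\bar\sigma}$ on $K_{(1-\bar\sigma)\rho/4}(\bar x)$; this converts the Hölder bound on the oscillation into a \emph{quantitative} lower bound for $u$ on an intrinsic sub-cylinder whose scale is expressible in $M_{\bar\sigma}$ and $\rho$ only.

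With this quantitative lower bound in hand, I would invoke Proposition~\ref{Prop:B:1}: the pointwise positivity $u\ge c_1M_{\bar\sigma}$ on $K_r(\bar x)$ at a fixed time certainly gives the measure-theoretical hypothesis with $\alpha=1$, and after one application we gain positivity with a constant $\eta c_1$ on $K_{2r}(\bar x)$ over a time interval of length $\delta r^p$. Iterating as in Remark~\ref{Rem:1.1} (valid since $u\ge0$ is a super-solution, so the either/or alternative collapses), after $k$ steps I reach a cube of sidelength $2^k r$ with positivity constant $\eta^k c_1 M_{\bar\sigma}$ on a forward time-strip of controlled length. Choosing $k$ so that $2^k r\ge\rho$ and arguing by the scale-invariance to match the time scale to $\theta\rho^p$, and finally using $M_{\bar\sigma}\ge 1$ to close the loop, yields the desired estimate $u(\cdot,\theta\rho^p)\ge\boldsymbol\gm^{-1}$ on $K_\rho$.

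The main obstacle is the bookkeeping in the sup-reaching/iteration step: one has to check that the number $k$ of expansion-of-positivity iterations needed to cover $K_\rho$, and the accompanying loss $\eta^k$, depend only on the data (and not on $\|u\|_\infty$ or on $M_{\bar\sigma}$). This is precisely where the homogeneity of the equation and the scale-invariance of the intrinsic cylinders $Q_\rho(\theta)$ with $\theta$ independent of $u$ play a decisive role; without the scaling $u\mapsto\lambda u$ the iteration count would degenerate, as it does in truly inhomogeneous doubly nonlinear settings. Once this is sorted, the constants $\theta$ and $\boldsymbol\gm$ emerge determined solely by $\{p,N,C_o,C_1\}$, as required.
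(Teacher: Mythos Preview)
Your plan is essentially the paper's own proof: normalize, apply a Krylov--Safonov-type sup-reaching trick, use the interior H\"older estimate to get a pointwise lower bound on a small cube, then iterate Proposition~\ref{Prop:B:1} outward. Two small corrections are worth making. First, the sup-reaching should be run over backward space-time cylinders $Q_\tau$ (this is what the paper does), not just the spatial slice $\{t=0\}$: the oscillation estimate from Theorem~\ref{Thm:1:1} needs a bound on $\|u\|_\infty$ over a full cylinder, which your spatial $M_\sigma$ does not provide. Second, the exponent $\lambda$ (the paper's $\sigma$) must be tuned to the expansion-of-positivity constant $\eta$ via $2^\lambda\eta=1$, not to the H\"older exponent $\beta$; this is precisely the cancellation that makes $\eta^k M_{\bar\sigma}$ independent of $k$ and resolves the bookkeeping you correctly flag as the main obstacle.
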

\begin{proof}
We only prove the right-hand inequality, as the left-hand one is a direct consequence 
(cf. \cite[Chapter~5, Section~3]{DBGV-mono}).
Introduce a new function
\[
v(x,t)\df{=}\frac{u(x_o+\rho x, t_o+\rho^p t)}{u(x_o,t_o)},
\]
which satisfies the same type of equation as \eqref{Eq:1:1} -- \eqref{Eq:1:2p} in $K_2\times(-2^p,2^p]$.
Thus we only need to show that there exist $\theta \in (0,1)$ and $\boldsymbol\gm>1$, such that
\begin{equation*}
\inf_{K_1}v(\cdot, \theta)\ge\boldsymbol\gm^{-1}.
\end{equation*}
To this end, we introduce, for $\tau\in(0,1)$, the family of nested cylinders $\{Q_\tau\}$
and the families of non-negative numbers $\{M_\tau\}$ and $\{N_\tau\}$
as follows:
\[
M_{\tau}=\sup_{Q_{\tau}}v,\quad
N_\tau=(1-\tau)^{-\sig},
\]
where $\sig>1$ is to be chosen.
The two functions $[0,1)\ni\tau\to M_\tau,\,N_\tau$
are increasing, and $M_o=N_o=1$ since $v(0,0)=1$.
Moreover, $N_\tau\to\infty$ as $\tau\to1$
whereas $M_\tau$ is bounded since $v$ is locally bounded. 
Therefore the equation $M_\tau=N_\tau$
has roots and we denote the largest one as $\tau_*$.
By the continuity of $v$, there exists $(y,s)\in Q_{\tau_*}$, such that
\[
v(y,s)=M_{\tau_*}=N_{\tau_*}=(1-\tau_*)^{-\sig}.
\]
Moreover,
\[
(y,s)+Q_R\subset Q_{\frac{1+\tau_*}2}\subset Q_1,\quad\text{ where }R\df{=}\frac{1-\tau_*}2.
\]
Therefore by the definition of $\tau_*$,
\[
\sup_{(y,s)+Q_R}v\le\sup_{Q_{\frac{1+\tau_*}2}}v\le 2^{\sig}(1-\tau_*)^{-\sig}\df{=}M_*.
\]
Now let $\eps_*\in(0,1)$ and set $r=\eps_*R$.
By Theorem~\ref{Thm:1:1} (note also Remark~\ref{Rmk:1:1}), for all $r<R$ and for all $x\in K_{r}(y)$, we have
\begin{align*}
v(x,s)-v(y,s)\ge-\boldsymbol\gm M_*\left(\frac{r}R\right)^{\be}
=-\boldsymbol\gm 2^{\sig}(1-\tau_*)^{-\sig}\left(\frac{r}R\right)^{\be}
\ge-\frac12(1-\tau_*)^{-\sig},
\end{align*}
provided we choose $\eps_*$ so small that
\[
\boldsymbol\gm 2^{\sig}\eps_*^{\be}\le\frac12.
\]
This in turn gives
\[
v(x,0)\ge\frac12(1-\tau_*)^{-\sig}\df{=}M\quad\text{ for all }x\in K_{r}(y).
\]
From this we may start employing Proposition~\ref{Prop:B:1} with $\al=1$
to conclude that there exist positive constants $\eta$ and $\dl$
 as indicated, such that 
\[
v\ge\eta M
\]
in the cylinder
\[
Q^{(1)}\df{=}
K_{2r}(y)\times \left[s+\dl r^{p},
s+\dl(2r)^{p}\right].
\]
Repeating this process, we conclude that for any positive integer $n$,  
\[
v\ge\eta^n M
\]
in the cylinder
\[
Q^{(n)}\df{=}
K_{2^n r}(y)\times \left[s+\dl (2^{n-1}r)^{p},
s+\dl (2^{n}r)^{p}\right].
\]
We may assume $\eps_*(1-\tau_*)$ is a negative, integral power of $2$.
Then choose $n$ such that $2^n r=2$.
In this way, we calculate
\[
\eta^n M=\tfrac12(1-\tau_*)^{-\sig}\eta^n=\tfrac12(2R)^{-\sig}\eta^n
=\tfrac12\left(\frac{2r}{\eps_*}\right)^{-\sig}\eta^n
=2^{-2\sig-1}\eps_*^{\sig}(2^\sig\eta)^n.
\]
Finally, we may choose $\sig$ such that $2^{\sig}\eta=1$. As a result,
setting $\boldsymbol\gm^{-1}=2^{-2\sig-1}\eps_*^{\sig}$, we have
\[
v\ge\boldsymbol\gm^{-1}\quad\text{ in }
Q^{(n)}=
K_2(y)\times \left[s+\tfrac12\dl,
s+\dl\right].
\]
A further application of Proposition~\ref{Prop:B:1} (note also Remark~\ref{Rem:1.1})
gives us the desired conclusion.
\end{proof}

\end{document}